\date{}
\title{Dynamical sensitivity of the infinite cluster
in critical percolation}
\author{Yuval Peres, Oded Schramm, Jeffrey E. Steif
}

\documentclass[12pt,hdvips,naturalnames]{article}
\usepackage{amsmath}
\usepackage{amssymb}
\usepackage{amsthm}
\usepackage{amsfonts}
\usepackage{graphicx}
\usepackage{pstricks}
\input{epsf.sty}

\newif\ifhyper\IfFileExists{hyperref.sty}{\hypertrue}{\hyperfalse}

\ifhyper\usepackage[hypertex]{hyperref}\fi

\newif\ifdraft
\drafttrue

\long\def\comment#1{}
\long\def\oldVersion#1{}

\numberwithin{equation}{section}
\numberwithin{figure}{section}

\newtheorem{theorem}{Theorem}
\numberwithin{theorem}{section}

\newtheorem{lemma}[theorem]{Lemma}
\newtheorem{proposition}[theorem]{Proposition}

\theoremstyle{remark}
\theoremstyle{remark}

\def\QED{\qed\medskip}

\newcommand{\R}{\mathbb{R}}

\newcommand{\N}{\mathbb{N}}

\def\calC{{\cal C}}

\def\bPsi{\mbox{\boldmath $\Psi$}}

\def\FCbetter(#1,#2){{ #1 \leftrightarrow #2 }}
\def\percrt{\FCbetter(\rho,\infty)}
\def\FCnot(#1,#2){{ 0 \not \stackrel{#2}{\leftrightarrow} #1 }}
\def\FCC(#1,#2,#3){{ #1 \stackrel{#2}{\leftrightarrow} #3 }}
\def\FCCnot(#1,#2,#3){{ #1 \stackrel{#2}{\not\leftrightarrow} #3 }}
\def\FCbetterTree(#1,#2){{ #1 \mapsto #2 }}
\def\FCnotTree(#1,#2){{ 0 \not \stackrel{#2}{\mapsto} #1 }}
\def\FCCTree(#1,#2,#3){{ #1 \stackrel{#2}{\mapsto} #3 }}
\def\FCCnotTree(#1,#2,#3){{ #1 \stackrel{#2}{\not\mapsto} #3 }}
\def\tmG{\tilde\mG}
\def\mG{{\mathcal G}}

\def\WW{Q}
\def\Ito/{It\^o}
\def\tree{T}
\def\perctime{\mathcal Q}
\def \eps {\epsilon}
\def \P {{\bf P}}
\def\md{\mid}
\def\Bb#1#2{{\def\md{\bigm| }#1\bigl[#2\bigr]}}
\def\BB#1#2{{\def\md{\Bigm| }#1\Bigl[#2\Bigr]}}
\def\Bs#1#2{{\def\md{\mid}#1[#2]}}
\def\Pb{\Bb\P}
\def\Eb{\Bb\E}
\def\PB{\BB\P}
\def\EB{\BB\E}
\def\ew#1{w_{#1}}

\def\Es{\Bs\E}

\def \E {{\bf E}}

\def\closure{\overline}
\def\ev#1{{\mathcal{#1}}}
\def \proof {{ \medbreak \noindent {\bf Proof.} }}
\def\proofof#1{{ \medbreak \noindent {\bf Proof of #1.} }}

\def\bl{\bigl}\def\br{\bigr}\def\Bl{\Bigl}\def\Br{\Bigr}

\def\One{{\mathbf 1}}

\def\noopsort#1{}

\hfuzz 1pt
\begin{document}
\maketitle

 \begin{abstract}
In dynamical percolation, the status of every bond is refreshed according to an independent Poisson clock. 
For graphs which do not percolate at criticality, the dynamical sensitivity of this property was analyzed 
extensively in the last decade. Here we focus on graphs which percolate at  criticality,
and investigate the dynamical sensitivity of the infinite cluster.
We first give two examples of bounded
degree graphs, one which percolates
for all times at criticality and one which has exceptional times of
nonpercolation.
We then make a nearly complete analysis of this question for spherically symmetric trees
with spherically symmetric edge probabilities
bounded away from $0$ and $1$. 
One interesting regime occurs when the expected number of vertices at the 
$n$th level that connect to the root at a fixed time is
of  order $n(\log n)^\alpha$.
R.\ Lyons (1990) showed  that  at a fixed time, there is an infinite cluster a.s.\ if and only if $\alpha >1$.
We prove that  the probability that there is an infinite cluster at all  times is 1
if   $\alpha > 2$, while this probability is 0 if $1<\alpha \le 2$. 
Within the regime where a.s.\ there is an infinite cluster at all times,
there is yet another type of ``phase transition'' in the
behavior of the process: if the expected number of
vertices at the $n$th level
connecting to the root at a fixed time
is of order $n^\theta$ with $\theta > 2$,
then the number of connected components of the set of times
in $[0,1]$ at which the root does not percolate
is finite a.s., while if $1<\theta < 2$,  then the number of such
components is infinite with positive probability.
\end{abstract}


\noindent{\em AMS Subject classification :\/ 60K35 } \\
\noindent{\em Key words and phrases:\/  Percolation, 
exceptional times}  \\

\section {Introduction}

Consider bond percolation on an infinite connected locally finite graph
$G$, where for some $p\in[0,1]$, each edge (bond) of $G$ is, independently of
all others, open with probability $p$ and closed with probability $1-p$.
Write $\pi_p$ for this product measure.
Some of the main questions in percolation theory (see \cite{Grimmett})
deal with the possible existence of infinite connected components
(clusters) in the random subgraph of $G$
consisting of all sites and all open edges.
Write $\calC$ for the event
that there exists such an infinite cluster. By Kolmogorov's 0-1 law,
the probability of $\calC$ is, for fixed $G$ and $p$, either 0 or 1.
Since $\pi_p(\calC)$ is nondecreasing in $p$, there
exists a critical probability $p_c=p_c(G)\in[0,1]$ such that
\[
\pi_p(\calC)=\left\{
\begin{array}{ll}
0 & \mbox{for } p<p_c \\
1 & \mbox{for } p>p_c.
\end{array} \right.
\]
At $p=p_c$, we can have either $\pi_p(\calC)=0$ or $\pi_p(\calC)=1$, 
depending on $G$.

H\"{a}ggstr\"{o}m, Peres and Steif~\cite{HPS} initiated the study of
dynamical percolation.
In this model, with $p$ fixed, the edges of $G$ switch back
and forth according to independent 2 state
continuous time Markov chains where closed switches to open at rate $p$
and open switches to closed at rate $1-p$. Clearly, $\pi_p$ is a stationary
distribution for this Markov process. The general question studied in
\cite{HPS} was whether, when we start with distribution $\pi_p$,
there could exist atypical times at which the percolation
structure looks markedly different than that at a fixed time. As the results in
\cite{HPS} suggest, it is most interesting to consider things at criticality;
that is, when $p=p_c$.

Write $\bPsi_p$ for the underlying probability measure
of this Markov process, and write $\calC_t$ for
the event that there is an infinite cluster of open edges (somewhere in
the graph) at time $t$.

There have been a number of papers on dynamical percolation after
\cite{HPS}, namely  \cite{PS}, \cite{K} and \cite{SS},
but all of the results (except one, see the comment after
Theorem~\ref{th:newexamples}) in these papers have been concerned with the case
where the graph does not
percolate at criticality (and for which there may or may not exist exceptional
times). The present paper deals with the case where the graph percolates at criticality
at a fixed time.

\medskip
Our first theorem gives examples where exceptional times exist,
and other examples where they do not exist.

\begin{theorem} \label{th:newexamples}

\smallskip\noindent
(i). There is a bounded degree graph which, at criticality, percolates at all times; i.e.,
\begin{equation}
\bPsi_{p_c}(\, \calC_t \, \mbox{ occurs for all  } \, t \, )=1.
\end{equation}

\smallskip\noindent
(ii). There is a bounded degree graph which percolates at criticality but has exceptional times, i.e.,
\begin{equation}
\bPsi_{p_c}(\neg\, \calC_t \, \mbox{ occurs for some  } \, t \, )=1.
\end{equation}
\nonumber
\end{theorem}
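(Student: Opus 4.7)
The plan is to realize both examples as bounded-degree spherically symmetric trees. For each $\alpha > 0$, construct a spherically symmetric tree $T_\alpha$ in which every non-root vertex has either $2$ or $3$ children (so the maximum degree is $4$), with the generations having $c_n = 3$ placed at asymptotic density $\rho \in (0,1)$ plus logarithmic-scale fluctuations. Writing $s_n = \#\{k \le n : c_k = 3\}$, one has $\mathrm{br}(T_\alpha) = 2^{1-\rho}3^\rho$, hence $p_c \in (1/3,1/2)$, and a direct calculation gives
\[
\E[N_n] \;=\; p_c^n L_n \;=\; (3/2)^{\,s_n - \rho n},
\]
where $N_n$ counts the level-$n$ vertices connected to the root at a fixed time. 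Choosing $s_n - \rho n \sim (\log n + \alpha \log\log n)/\log(3/2)$ arranges $\E[N_n] = \Theta(n(\log n)^\alpha)$. By Lyons's classical series criterion for percolation on spherically symmetric trees, $\pi_{p_c}(\calC) = 1$ iff $\sum_n 1/\E[N_n] < \infty$ iff $\alpha > 1$; we always take $\alpha > 1$ so that both graphs are critical in the required sense.

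For part (i), I would take $\alpha$ large (aiming for $\alpha > 2$). By spherical symmetry and the tail $0$--$1$ law, the analysis of $\calC_t$ reduces (up to null sets) to the root's connection to large levels; setting $B_n = \{t \in [0,1] : N_n(t) = 0\}$, exceptional non-percolation times are contained in $\bigcap_n B_n$, which by compactness (applied to closures) is nonempty iff $B_n \ne \emptyset$ for every $n$. It then suffices, via Borel--Cantelli, to bound $\P(B_n \ne \emptyset)$ summably in $n$. The naive union bound over Poisson clock rings near the root is useless since the number of relevant edges is exponential in $n$. Instead, I would split edges into those at levels $\le m(n)$ (``macroscopic'') and those at levels $> m(n)$ (``microscopic''): conditional on a macroscopic configuration, a second-moment estimate on $N_n$ gives a lower bound on the conditional survival probability, and the macroscopic process in $[0,1]$ visits only a controllable number of states. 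Optimizing $m(n)$ should deliver the summable bound exactly when $\alpha > 2$.

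For part (ii), I would take $\alpha \in (1, 2]$ and apply a second moment to $\mu_n := \mathrm{Leb}(B_n)$. Stationarity gives $\E[\mu_n] = \P(N_n(0) = 0) = \Theta(1/\E[N_n])$, while
\[
\E[\mu_n^2] \;=\; \int_0^1 \!\! \int_0^1 \P\bigl(N_n(s) = 0,\, N_n(t) = 0\bigr)\, ds\, dt
\]
is estimated by decomposing edges of $T_\alpha$ according to whether their Poisson clock rings during $(s,t)$: non-refreshed edges retain the same status at both times, while refreshed edges are essentially independent, so the two-time event reduces to a percolation computation on the tree with effective edge probabilities depending on $|s-t|$. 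For $\alpha \in (1,2]$, this should give $\E[\mu_n^2] \le C(\E[\mu_n])^2$; Paley--Zygmund then yields $\P(\mu_n > 0) \ge c > 0$ uniformly in $n$. Compactness applied to the closures of the $B_n$ delivers $\bigcap_n \overline{B_n} \ne \emptyset$ with probability $\ge c$, hence an exceptional non-percolation time; a standard $0$--$1$ law for dynamical percolation upgrades the probability from $c$ to $1$.

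The principal technical obstacle in both parts is the precise estimate of the two-time correlation $\P(N_n(s) = 0,\, N_n(t) = 0)$, which drives the threshold phenomenon at $\alpha = 2$. It reduces to a weighted sum over pairs $(v,w)$ of level-$n$ vertices involving the depth of their least common ancestor and the Poisson-clock activity during $(s,t)$; matching this against $(\E[\mu_n])^2$ for $\alpha \le 2$, and extracting strictly better behaviour for $\alpha > 2$, is the heart of the argument and the source of the $\alpha = 2$ threshold.
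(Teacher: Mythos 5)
Your high-level idea -- build bounded-degree spherically symmetric trees with $d_j\in\{2,3\}$, tune the density of $3$'s so that at $p=p_c=1/\mathrm{br}$ one gets $w_n\asymp n(\log n)^\alpha$, and then invoke the dichotomy of Theorem~\ref{th:sstrees1} -- is legitimate and is explicitly acknowledged in the paper as a valid (though not self-contained) route to Theorem~\ref{th:newexamples}. The paper's own proof is quite different: it replaces the edges of $\Z^2$, annulus by annulus, with bounded-degree gadgets $G_j$ built from thickened square grids joined by long bridges (Lemma~\ref{l.switcheroo}), tuned so that connections are either very stable (part (i)) or flip quickly (part (ii)). That construction buys self-containedness; yours buys a cleaner reduction, provided you actually have a proof of Theorem~\ref{th:sstrees1}.

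However, your sketch of that proof contains genuine errors, and I don't think your second-moment argument can be repaired in the form you state it.
First, the inclusion is backwards. With $B_n=\{t\in[0,1]:N_n(t)=0\}$, the sets $B_n$ are \emph{increasing} in $n$ (if no level-$n$ vertex connects to $\rho$, no level-$(n+1)$ vertex does either). The set of times at which the root fails to percolate is $\bigcup_n B_n$, not $\bigcap_n B_n$; the intersection $\bigcap_n B_n=B_1$ is just the set of times at which every edge at the root is closed. Compactness gives you nothing here, and the Borel--Cantelli strategy for part (i) as written targets the wrong event.
Second, the first-moment estimate is wrong. For a critical spherically symmetric tree with $\alpha>1$, \eqref{e:lyonsstrong} gives $\P(N_n(0)>0)\asymp\bigl(\sum_{k\le n}1/w_k\bigr)^{-1}\to c\in(0,1)$, so $\E[\mu_n]=\P(N_n(0)=0)\to 1-c\in(0,1)$; it is \emph{not} $\Theta(1/w_n)$. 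Consequently $\E[\mu_n]$ and $\E[\mu_n^2]$ are both $\Theta(1)$ for every $\alpha>1$, and the Paley--Zygmund step only reproduces the obvious fact that the root fails to connect at time $0$ with positive probability. This naive second moment cannot detect the $\alpha=2$ threshold.
The actual argument in the paper is more delicate in exactly the places you glossed over: for part (ii) of Theorem~\ref{th:sstrees1} (the $\alpha<2$ case) one conditions on $\mathcal F_n$, restricts to a good event $G$ on which $W_{n,t}\le m_n\asymp M\,w_n$, and considers the Lebesgue measure $X_n$ of times at which a \emph{padded} set $\tilde S_{n,t}$ of exactly $m_n$ level-$n$ vertices all fail to percolate within their subtrees. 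Then $\E[X_n\mid\mathcal F_n]\ge\tfrac12\tilde q_n^{\,m_n}\to 0$ while the conditional second-moment ratio is controlled via the two-time estimate $q_n(t)\le O(1)\,q_n^2/t$ (Lemma~\ref{lemma:correlation}); the threshold at $\alpha=2$ emerges from whether $(1/\tilde q_n)^{m_n}\le n^{\sigma}$ for some $\sigma<1$. None of this is captured by the unconditioned second moment of $\mathrm{Leb}(B_n)$. Similarly, part (i) is proved not by Borel--Cantelli on $B_n\ne\emptyset$ but by an inductive scheme over doubly-exponential scales $n_k=2^{2^k}$, keeping many percolating witnesses alive over short time windows (Proposition~\ref{pr:key}).

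In short: the reduction to bounded-degree spherically symmetric trees is a sound alternative to the paper's annulus construction, but your proposed proof of the underlying tree dichotomy is not correct as stated. To repair it you would need, at minimum, the conditional truncated second moment and the correlation bound on $q_n(t)$, which are exactly the technical heart of the paper's Section~4.
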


\medskip\noindent
{\bf Remarks:}
An example of an unbounded degree graph which percolates
at criticality but for which there are exceptional times of nonpercolation
can be found in \cite{HPS}.

Although Theorem~\ref{th:newexamples} follows from our Theorem~\ref{th:sstrees1}
below, we find it instructive to treat it separately, since the proof
is easier and self-contained.

\medbreak

We now discuss spherically symmetric trees with spherically symmetric edge
probabilities. These are trees in which every vertex on a given level has
the same number of offsprings and the edge probabilities may vary but are
constant on a given level.

Denote the root of the tree by $\rho$,
the edge probability for edges going from level $n-1$ to level $n$ by $p_n$,
the set of vertices at level $n$ by $T_n$ and the subtree of $T$ rooted at
some vertex $x$ by $T^x$.

\medskip\noindent
{\bf  Standing assumption:}
We assume throughout the paper that $0< \inf_n p_n \le \sup_n p_n < 1$.

\medskip
By a result of R.\ Lyons (\cite{Ly}),
percolation occurs (at a fixed time) if and only if
$$
\sum_n{\frac{(\prod_{i=1}^n p_i)^{-1}}{|T_n|}} <\infty.
$$
If we let $W_n:=|\{x\in T_n: \FCbetter(\rho,x)\}|$
and $\ew n:=\E[W_n]$, this is equivalent to
\begin{equation} \label{e:perc}
\sum_n{\frac{1}{\ew n}} <\infty.
\end{equation}
In fact, it follows from \cite{Ly} that
\begin{equation} \label{e:lyonsstrong}
P(\FCbetter(\rho,T_{n})) \asymp
\left(\sum_{k=1}^n\frac{1}{\ew k}\right)^{-1}.
\end{equation}
(The relation $\asymp$ means that the ratio between the two sides
is bounded between two positive constants which
may depend on $\inf_n p_n$ and $\sup_n p_n$.)

Dynamical percolation for a graph with edge dependent probabilities is defined
in the obvious way.
To be able to see the crossover between having exceptional times of
nonpercolation and not having such times, we need to look at things
at the right scale. It turns out that the proper parameterization
is to assume that $\ew n\asymp n(\log n)^\alpha$ for some $\alpha>0$.
Lyons' criterion~\eqref{e:perc} easily yields that percolation occurs (at a 
fixed time) if and only if $\alpha > 1$.

\begin{theorem} \label{th:sstrees1}
Consider a spherically symmetric tree with spherically symmetric edge probabilities.

\smallskip\noindent
(i). If
$$
\lim_n \frac{\ew n}{n(\log n)^{\alpha}}=\infty
$$
for some $\alpha > 2$, then there are no exceptional times of  nonpercolation.

\smallskip\noindent
(ii). If
$$
{\ew n}\asymp {n(\log n)^{\alpha}}
$$
for some $1<\alpha \le 2$, then there are exceptional times of nonpercolation.
\end{theorem}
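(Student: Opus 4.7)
The plan is to reduce the existence of exceptional times of nonpercolation to a random-set nonemptiness problem and attack it via a dynamic second-moment method. For each vertex $v$, let $J_v(t)$ denote the indicator that $v$ starts an infinite open ray in $T^v$ at time $t$, set $Y_n(t) := |\{v \in T_n : J_v(t) = 1\}|$, and $A_n := \{t \in [0,1] : Y_n(t) = 0\}$. A vertex in $T_n$ that starts an infinite ray forces one of its children in $T_{n+1}$ to do the same, so $A_{n+1} \subseteq A_n$, and the set of exceptional times $E := \{t : \neg \calC_t\}$ coincides with $\bigcap_n A_n$. Provided the $A_n$ can be taken closed (via right-continuous modifications or finite-cylinder approximations of $J_v$), compactness gives $P(E \ne \emptyset) = \lim_n P(A_n \ne \emptyset)$. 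By spherical symmetry the $\{J_v(t) : v \in T_n\}$ are i.i.d.\ Bernoulli with parameter $q_n$, and applying \eqref{e:lyonsstrong} to the subtree $T^v$ yields $q_n \asymp 1/(\ew n \sum_{k>n} 1/\ew k) \asymp 1/(n \log n)$ in our regime.

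For part (ii), I apply the Paley--Zygmund inequality to $|A_n|$. Independence of subtrees at level $n$ gives
\begin{equation*}
E[|A_n|] = (1-q_n)^{|T_n|},\qquad E[|A_n|^2] = \int_0^1 \!\!\int_0^1 \bigl(1 - 2q_n + b_n(|t-s|)\bigr)^{|T_n|}\,ds\,dt,
\end{equation*}
where $b_n(\tau) := P(J_v(0) = J_v(\tau) = 1)$. Estimating $b_n$ proceeds via a dynamic analog of the static recursion $q_n = 1-(1-p_{n+1}q_{n+1})^{d_{n+1}}$, with the edge factor $p_{n+1}$ replaced by its two-time counterpart $p_{n+1}(p_{n+1}+(1-p_{n+1})e^{-\tau})$. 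Iterating produces the dynamic conductance $G_m(\tau) := \prod_{i \le m}(p_i + (1-p_i)e^{-\tau})$, and the second-moment ratio reduces to an energy-type integral of the form $\int_0^1 \sum_m (\ew m G_m(\tau))^{-1}\,d\tau$. Splitting this into the short-time regime $\tau \lesssim 1/m$ (where $G_m \asymp 1$) and the long-time regime $\tau \gtrsim 1/m$ (where $G_m$ decays exponentially toward $\prod p_i$), one sees the integral is comparable to $\sum_m (\log m)/\ew m$, which converges iff $\alpha > 2$. Hence for $1 < \alpha \le 2$ the Paley--Zygmund ratio stays bounded below (at least along a truncation), so $\liminf_n P(A_n \ne \emptyset) > 0$, and a 0--1 law for tail events promotes this to $P(E \ne \emptyset) = 1$.

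For part (i), the same energy sum diverges under $\alpha > 2$, and I use this to force $P(A_n \ne \emptyset) \to 0$. The strategy is to discretize $[0,1]$ into $M = M(n)$ sub-intervals of length $1/M$, and estimate $P(A_n \cap I \ne \emptyset)$ on each sub-interval $I$ by combining a first-moment bound with the two-time correlation $b_n$; the upshot is that under $\alpha > 2$ the probability of a coordinated persistent failure throughout a short interval decays faster than $1/M$. Summing over the $M$ sub-intervals gives a convergent union bound, hence $P(A_n \ne \emptyset) \to 0$, which by nesting yields $P(E \ne \emptyset) = 0$.

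The main obstacle is establishing the dynamic recursion for $b_n(\tau)$ with enough precision to capture the factor $\log m$ that drives the threshold. In the static case Lyons' recursion factorizes cleanly across levels as products of ratios $\ew{i+1}/\ew i$; the dynamic version must carry the two-time edge factors $(p_i+(1-p_i)e^{-\tau})$ throughout, and the resulting conductance $G_m(\tau)$ behaves very differently in the short-time and long-time regimes. Cleanly interpolating these two regimes at the crossover $\tau \asymp 1/m$ to produce the $\log m$ factor in the energy integral, and handling the residual nonlinearity in the recursion so that $q_n$ and $b_n$ are controlled simultaneously, is the technical crux of the argument.
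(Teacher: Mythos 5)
Your reduction to a second-moment estimate for $|A_n|$ is where the argument fails. Writing $b_n(\tau) := P(J_v(0)=J_v(\tau)=1)$, you have $E[|A_n|] = (1-q_n)^{|T_n|}$ and
\begin{equation*}
\frac{E[|A_n|^2]}{E[|A_n|]^2} \le 2\int_0^1 \Bigl(1 + \frac{b_n(\tau)-q_n^2}{(1-q_n)^2}\Bigr)^{|T_n|}d\tau \le 2\int_0^1 \exp\Bigl(O\bigl(|T_n|\,q_n^2/\tau\bigr)\Bigr)\,d\tau,
\end{equation*}
using the two-time correlation bound $b_n(\tau)-q_n^2\le O(q_n^2/\tau)$, which is the content of the paper's Lemma~\ref{lemma:correlation}. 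The exponent is controlled by $|T_n|\,q_n^2$, not by $\ew n\,q_n^2$. Since the $p_i$ are bounded away from $1$, $|T_n| = \ew n/\prod_{i\le n}p_i\ge c^n\ew n$ for some $c>1$, so with $q_n\asymp 1/(n\log n)$ we get $|T_n|\,q_n^2\gtrsim c^n(\log n)^{\alpha-2}/n$, which grows exponentially. Hence the ratio blows up super-exponentially and Paley--Zygmund gives nothing. The defect is structural: you ask that all $|T_n|$ vertices at level $n$ fail to start an infinite ray, when all that is needed is for the root's connected front to fail. The paper's fix is not a cosmetic truncation. It conditions on $\mathcal F_n$, restricts (via Doob's inequality) to a good event on which $W_{n,t}\le M\ew n$ for all $t\in[0,1]$, and replaces $T_n$ by a deterministic completion $\tilde S_{n,t}$ of the connected set to size exactly $m_n=\lfloor M\ew n\rfloor$. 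This turns the exponent into $m_n\asymp \ew n$, which is polynomial, and then $m_n q_n^2\asymp(\log n)^{\alpha-2}/n$ stays bounded for $\alpha\le 2$ --- exactly what makes the correlation bound give a bounded conditional second-moment ratio. Even then the argument only covers $\alpha<2$; the boundary case $\alpha=2$ requires the separate Baire-category argument of Proposition~\ref{pr:dense}. Your phrase ``at least along a truncation'' gestures toward the fix but leaves out the parts that carry the whole argument.

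Two further points. First, for part~(i) the discretization-plus-union-bound sketch is too vague to assess; it is not clear how the two-time bound would make $P(A_n\cap I\neq\emptyset)$ decay uniformly faster than $1/M$ over subintervals $I$. The paper instead runs a multi-scale Borel--Cantelli scheme at levels $n_k=2^{2^k}$, propagating lower bounds on the number of vertices connected to the root throughout each of $n_k^2$ short time-intervals from one scale to the next via large-deviation estimates (Proposition~\ref{pr:key} and Lemmas~\ref{lemma:Lyonsnlogn}--\ref{lemma:secondtermbound}); a single-scale union bound does not obviously produce this. Second, the claim $P(E\ne\emptyset)=\lim_n P(A_n\ne\emptyset)$ is not automatic: each $A_n$ is a finite intersection of the open sets of nonpercolating times, hence open, and a nested decreasing family of nonempty open subsets of $[0,1]$ can have empty intersection. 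The paper deals with this via Lemma~\ref{lemma:staysclose}, Levy's 0--1 law, and a Baire-category argument, rather than by assuming a closed modification exists.
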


\medskip\noindent
{\bf Remarks:} \\
(1). To see a concrete example, if
we have a tree with $|T_n| \asymp 2^nn(\log n)^\alpha$ and
$p=1/2$ for all edges, then if $\alpha>2$, we are in case (i)
while if $\alpha\le 2$, we are in case (ii). (Note Lyons'
theorem tells us that $p_c=1/2$ in these cases.) \\
(2). The theorem implies that if $\ew n\asymp n^\alpha$ with $\alpha > 1$, then
there are no exceptional times of nonpercolation, while if
$\ew n \asymp n$, then~(\ref{e:perc}) implies that there is no percolation
at a fixed time.
Hence, if we only look at the case where $w_n\asymp  n^\alpha$
for some $\alpha\ge 1$, we do not see the dichotomy we are after.
Rather, Theorem \ref{th:sstrees1} tells us that
one needs to look at a ``finer logarithmic scale'' to see this
``phase transition''.

\medskip
Interestingly, it turns out that even within the regime where
there are no exceptional times of nonpercolation,
there are still two very distinct dynamical behaviors 
of the process.

\begin{theorem}\label{th:dyntrans}
Consider a spherically symmetric tree $T$, with spherically symmetric
edge probabilities. Let $d_j$ denote the number of children that a
vertex in $T_j$ has.

\smallskip\noindent
(i). When $\sum_{k=1}^\infty k\,\ew k^{-1}<\infty$, a.s.\ the set of
times $t\in[0,1]$ at which the root percolates
has finitely many connected components. (This holds for example
if $\ew k\asymp k^\theta$ with $\theta> 2$ as well as for supercritical
percolation on a homogeneous tree.)

\smallskip\noindent
(ii). If $\sup_j d_j<\infty$ and $\ew k\asymp k^\theta$, where
$1<\theta<2$, then with positive probability the set of times $t\in[0,1]$
at which the root percolates has infinitely many connected components.
The same occurs if $\ew k\asymp k(\log k)^\theta$ for any $\theta > 1$.
\end{theorem}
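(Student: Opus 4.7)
My plan is to reformulate both parts via the \emph{nested interval tree} $\mathcal T$. Define $Y_n(t) := \One\{\rho \leftrightarrow T_n \text{ at time }t\}$; since $Y_n$ depends on only finitely many edges, it is right-continuous, $\{0,1\}$-valued, and piecewise constant with finitely many jumps in $[0,1]$, so $\{t : Y_n(t) = 1\}$ is a finite union of intervals, its ``level-$n$ components.'' Because $Y_{n+1} \leq Y_n$, each level-$(n+1)$ component sits inside a unique level-$n$ component; declaring level-$n$ components to be the depth-$n$ vertices of a rooted tree $\mathcal T$ (with parent-child edges from the nesting), the components of $\{t : \rho\text{ percolates at }t\} = \bigcap_n \{Y_n = 1\}$ are in bijection with the infinite branches of $\mathcal T$. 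Part~(i) thus reduces to showing $\mathcal T$ has finitely many infinite branches a.s., and part~(ii) to showing (with positive probability) infinitely many.

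For part~(i), it suffices to show that the \emph{pruned} tree (vertices retaining an infinite descendant) has only finitely many branching vertices a.s. A branching vertex at depth $n+1$ arises from an ``interior hole'' of $Y_{n+1}$ inside some level-$n$ component: a maximal interior sub-interval on which $Y_{n+1}=0$, with both sides supporting infinite descendants. Only edges at level $n+1$ can open such a hole (edges at lower levels simultaneously move $Y_n$ and so can only alter the boundary of a level-$n$ component), and the rate at which a level-$(n+1)$ edge $e=(u,v)$ creates one is $p_{n+1}(1-p_{n+1})\,P(e \text{ pivotal for }\{\rho\leftrightarrow T_{n+1}\})$, which in the spherically symmetric setting factors as $\pi_n\delta_n$, with $\pi_n = p_1\cdots p_n$ and $\delta_n$ the probability that no branching subtree off the root-to-$u$ path reaches $T_{n+1}$. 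Combining this pivotal rate with a survival factor for both sides of the hole (controlled by Lyons' asymptotic $\alpha_k \asymp (\ew k (S - S_k))^{-1}$ with $S_k = \sum_{i\leq k} \ew i^{-1}$ and $S = \sum_i \ew i^{-1}$) and summing over $n$, careful algebraic manipulation using the recursion $\alpha_k = 1 - (1 - p_{k+1}\alpha_{k+1})^{d_k}$ should reduce the total to a constant times $\sum_k k/\ew k$, finite by hypothesis; Borel--Cantelli then gives finitely many branching vertices a.s.

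For part~(ii), the analogous expected total is infinite, and a Paley--Zygmund second-moment argument---leveraging the bounded-degree hypothesis $\sup_j d_j<\infty$ and spherical symmetry to control covariances between holes at well-separated times via the independence of their associated disjoint subtrees---gives positive probability of arbitrarily many branching vertices surviving pruning. A tail-triviality argument (the event ``$\mathcal T$ has infinitely many infinite branches'' is invariant up to a null set under modifying any single edge's Poisson clock) then upgrades this to positive probability of infinitely many components. The main technical hurdle in~(i) is the clean reduction of the pivotal-plus-survival sum to precisely $\sum_k k/\ew k$; the key ingredients are the identity $(1-p_{j+1}\alpha_{j+1})^{d_j - 1} = (1-\alpha_j)/(1-p_{j+1}\alpha_{j+1})$ implicit in the recursion for $\alpha_k$ together with careful telescoping. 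In~(ii) the covariance bound for the second moment is the technical core, with the logarithmic regime $\ew k \asymp k(\log k)^\theta$ requiring a particularly sharp form.
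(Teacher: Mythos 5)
Your ``nested interval tree'' reformulation is a reasonable conceptual repackaging, and in spirit both you and the paper are counting the same thing (switches of pivotal edges). However, the paper takes a more economical route, and your sketch contains a genuine error in part (i) and is essentially unargued in part (ii).

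In part (i), the assertion ``Only edges at level $n+1$ can open such a hole (edges at lower levels simultaneously move $Y_n$ and so can only alter the boundary of a level-$n$ component)'' is false. An edge $e$ at level $m\le n$ can be pivotal for $\{\FCbetter(\rho,T_{n+1})\}$ without being pivotal for $\{\FCbetter(\rho,T_n)\}$: for instance, if there are two disjoint open paths from $\rho$ to $T_n$ but only the one through $e$ continues to $T_{n+1}$. When such an $e$ switches, $Y_n$ is unchanged but $Y_{n+1}$ flips, opening a hole strictly inside a level-$n$ component. So your accounting of hole-creation rates undercounts by omitting all levels $\le n$. The paper avoids this entirely: it sets $Z_n:=\{t\in[0,1]:\FCC(\rho,t,T_n)\}$, notes that $\partial Z_n$ consists of the times at which \emph{any} pivotal edge (at any level $\le n$) for $\{\FCbetter(\rho,T_n)\}$ switches, bounds $\Eb{|\partial Z_n|}=\sum_{m\le n}2p_m(1-p_m)u(m,n)\le O(1)\sum_{k\le n}k\,\ew k^{-1}$, and then uses the elementary fact $|\partial Z|\le\liminf_n|\partial Z_n|$. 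The key technical input is Lemma~\ref{l:prodfin}, $\Eb{W_n}\Pb{W_n=1}\le\Pb{W_n>0}^2$, combined with~\eqref{e:lyonsstrong}; there is no need for ``survival factors for both sides'' of a hole, nor any telescoping identity. Note also that even granting your counting, bounding the number of branching vertices in the pruned $\mathcal T$ is not enough — a single level-$n$ vertex of $\mathcal T$ can have many children, so you must control total excess degree $\sum_v(\deg v-1)$, not just the number of branch points.

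In part (ii), the sketch amounts to ``a Paley--Zygmund second-moment argument gives it,'' but this is exactly where the work is. The paper proves a stronger quantitative statement (Theorem~\ref{th:manyswitch}) requiring: sharp two-sided asymptotics for the product $\prod_{j<n}(1-b_j)^{d_j-1}$ (Lemma~\ref{l:bprod}), a converse to Lemma~\ref{l:prodfin} (Lemma~\ref{l:piv}), and a careful covariance estimate for pairs of pivotal edges split into the two cases $\mathcal E_1$ (disjoint paths) and $\mathcal E_2$ (nested paths), with the ancestral overlap $k$ and the time-gap $r=|s-t|$ interacting through a factor $\exp(-\delta kr/3)$. None of this is addressed. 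Finally, the ``tail-triviality'' upgrade you propose is unnecessary: Paley--Zygmund with a uniform constant $c$ (as the paper obtains) already yields $\Pb{\lim_n\mathfrak X_n=\infty}\ge c>0$, which is all the theorem claims.
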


\medskip\noindent
{\bf Remarks:}
(1). There is some gap between cases (i) and (ii), in particular, the
case $\ew k\asymp k^2$. In Theorem~\ref{th:manyswitch} we give more
general conditions under which (ii) holds, but we do not close this gap. \\
(2). It is easy to show (see, for example, Lemma \ref{lemma:staysclose})
that for any graph, if there are exceptional times of nonpercolation,
then the set of times $t\in[0,1]$ at which a fixed vertex percolates
is totally disconnected and hence has infinitely many connected components
with positive probability.

\medskip
{}From the proof of Theorem~\ref{th:dyntrans}.(i), it is easy to see that
for any graph, any edge dependent probabilities and any fixed vertex $x$,
if $I_n$ is the sum of the influences (see Section~\ref{sec:5} for the
definition of influence) for the event 
$\{x \mbox{ percolates to distance $n$ away}\}$,
then $\liminf_n I_n < \infty$ implies that the set of times $t\in[0,1]$ at
which $x$ percolates has finitely many connected components a.s.
Next, if $I_x(e)$ is the influence of the edge $e$ for the event
$\{\FCbetter(x,\infty)\}$,
it is easy to see from Fatou's lemma that
\begin{equation} \label{eq:Fatou}
\sum_e I_x(e) \le \liminf_n I_n.
\end{equation}
The next result tells us what we can conclude under the assumption that $\sum_e I_x(e) <\infty$.

\begin{theorem} \label{th:pivotal}
Consider dynamical percolation on any connected graph with possibly edge dependent
probabilities which percolates at a fixed time and let $x\in V$. Assume that
\begin{equation}
\label{e:piv}
\sum_e I_x(e) < \infty.
\end{equation}
Then a.s.\ $f(t):=\One_{\{\FCC(x,t,\infty)\}}$ is equal a.e.\ to a function of
bounded variation on $[0,1]$.
Moreover, this implies that there are no exceptional
times of nonpercolation.
\end{theorem}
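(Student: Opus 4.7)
The plan is a pivotal-counting argument: I would bound the expected number of jumps of $f$ on $[0,1]$ by a constant multiple of $\sum_e I_x(e)$, and then deduce both the bounded variation property and the absence of exceptional times from that finiteness.

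First, observe that $f(t)$ can change only at a time $\tau$ when some edge's Poisson clock rings. If $e$ denotes that edge, then $f(\tau^-)\neq f(\tau)$ requires both that $e$'s state actually flipped at $\tau$ and that $e$ is pivotal for $\{x\leftrightarrow\infty\}$ in the configuration at $\tau^-$. Pivotality of $e$ depends only on the statuses of the other edges $e'\neq e$; these, at time $\tau^-$, have the stationary product distribution and are independent of $e$'s clock. Conditioning on the ring times of $e$ therefore yields, for each $e$,
\begin{equation*}
\E\bigl[\#\{\tau\in[0,1]:f\text{ changes at }\tau\text{ because of edge }e\}\bigr]\le 2p_e(1-p_e)\,I_x(e)\le I_x(e).
\end{equation*}
Summing over $e$ and invoking~\eqref{e:piv}, the expected total number of jumps of $f$ on $[0,1]$ is finite, so a.s.\ there are only finitely many switching times $\tau_1<\cdots<\tau_k$. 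Between consecutive switching times the relevant edge configuration is unchanged, so $f$ is constant on each of the finitely many resulting subintervals; thus off the finite set $\{\tau_1,\dots,\tau_k\}$, $f$ coincides with a piecewise-constant (hence BV) function on $[0,1]$, giving the first assertion.

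For the second assertion I would combine this structure with Fubini. By stationarity $P(f(t)=1)$ is constant in $t$; in the setting of the theorem (fixed-time percolation at the vertex $x$) that constant equals $1$, so $\int_0^1 f(t)\,dt = 1$ a.s.\ and thus $f=1$ Lebesgue-a.e.\ on $[0,1]$. Since the clock rings in $[0,1]$ are a.s.\ distinct and finite in number, each of the finitely many subintervals on which $f$ is constant has positive length, so the constant value on every piece must equal $1$. Hence $f\equiv 1$ on $[0,1]$ a.s., ruling out any exceptional times of nonpercolation.

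The hardest part is making the pivotality bookkeeping clean when the event $\{x\leftrightarrow\infty\}$ depends on infinitely many edges: one must be careful about the definition of pivotality for an event with infinite support and must justify the exchange of $\sum_e$ with the expectation. This is handled either by a direct application of Mecke's formula to the marked Poisson process of clock rings over all edges, or by approximating $\{x\leftrightarrow\infty\}$ by the finite-range events $\{x\leftrightarrow T_n\}$ and using a monotone/Fatou-type argument together with~\eqref{eq:Fatou}. Once that is in hand, the remaining deductions are routine.
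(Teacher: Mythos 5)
Your central claim in Step 1 --- that $f(t)=\One_{\{\FCC(x,t,\infty)\}}$ can jump only at a clock ring of an edge that is pivotal for $\{\FCbetter(x,\infty)\}$ --- is precisely the assertion that you cannot make for free, and it is not resolved by the two ``handlings'' you mention at the end. The event $\{\FCbetter(x,\infty)\}$ is a countable intersection of cylinder events and hence closed in the configuration space; with the paper's convention, the set $Z=\{t:\FCC(x,t,\infty)\}$ is closed in $t$. This makes ``drops'' $1\to 0$ in one direction impossible, but the indicator can still change value at a time $t$ at which no clock rings: the configurations at times $s\downarrow t$ (or $s\uparrow t$) converge to the configuration at $t$ in the product topology, and since nonpercolation is an open (not closed) event, the limit configuration may percolate while the approximating ones do not. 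Such a jump is not ``caused'' by any single pivotal edge, so Mecke's formula, which only counts the number of pivotal flips, does not bound the total number of jumps of $f$. Approximating by $\{\FCbetter(x,T_n)\}$ does not rescue this either: the expected number of jumps of $\One_{\{\FCC(x,t,T_n)\}}$ is $2\sum_{|e|\le n}p_e(1-p_e)I_n(e)$ with $I_n(e)$ the influence for the \emph{finite} event, and \eqref{eq:Fatou} only gives $\sum_e I_x(e)\le\liminf_n I_n$, not a bound the other way. Thus the hypothesis $\sum_e I_x(e)<\infty$ does not control the variation of these finite approximations. The paper sidesteps exactly this difficulty by working with the conditional probabilities $X_n(t):=P(\FCC(x,t,\infty)\mid\mathcal F_n)$ rather than with indicators of finite-range events: the total variation $V_n$ of $X_n$ satisfies $\Eb{V_n}=2\sum_{e\in\mathcal E_n}I_x(e)p_e(1-p_e)$ with the influence $I_x(e)$ of the \emph{limit} event appearing, and $\{V_n\}$ is a nonnegative submartingale, so has an a.s.\ finite limit; martingale convergence and Fubini then produce a BV modification of $f$. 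This is a genuinely different and essential device, and without something like it your argument would establish the (false in general) stronger statement that $f$ itself is piecewise constant with finitely many jumps.

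Your Step 4 also contains an error: the hypothesis is that the \emph{graph} percolates at a fixed time, which does not imply $P(\FCbetter(x,\infty))=1$. In general $P(f(t)=1)<1$, so one cannot conclude $\int_0^1 f\,dt=1$ nor $f\equiv 1$. The paper's deduction of ``no exceptional times'' is more delicate: it uses that $\int_0^1 f\,dt>0$ with positive probability, combined with the Kolmogorov $0$-$1$ law (if exceptional times existed, a.s.\ they would be dense in every interval) and the observation that the set of nonpercolation times for $x$ is open. A dense open set of zeros together with a positive-measure set where $f=1$ is incompatible with $f$ being a.e.\ equal to a BV function. This is the correct argument, and it is compatible with $P(f(t)=1)<1$.
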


\medskip\noindent
{\bf Remarks:} (1). Note that this result is applicable even in the
supercritical case. \\
(2). While it is easy to check that when the graph is a tree the summability above 
does not depend on $x$, interestingly, this is false in the general
context of connected graphs, even in the case of bounded degree.

\medskip
In~\cite{HPS}, it was argued that the events discussed in the above theorems
are measurable; a similar comment applies to all of our results.
Thus, measurability issues will not concern us here.

\medskip
As far as motivation, the questions that we look at give us a better
understanding of the stability properties of a critical infinite cluster
while at the same time fall into the general framework of studying
polar sets for stationary reversible Markov processes.

\medskip
The dynamical percolation results in \cite{HPS} were extended in
\cite{PS} and then further refined in \cite{K}. In \cite{SS}, it was shown
that there are exceptional times at criticality on the triangular lattice,
yielding the first example of a transitive graph with this property. 
We mention a few other papers where analogous dynamical sensitivity questions 
have been studied for other models. Analogous
questions for the Boolean model, where the points undergo independent
Brownian motions, were studied in \cite{BMW} and for certain interacting
particle lattice systems (where updates are therefore not done in an 
independent fashion) are studied in \cite{BrSt}. 
In \cite{BeSc}, it is shown that
there are exceptional two dimensional slices for the Boolean model in
four dimensions and finally, in \cite{JS}, dynamical versions of
Dvoretzky's circle covering problem are studied.

\medskip\noindent
{\bf Notation:}
(1). For subsets $A$ and $B$ of the vertices and $t$, we let
$\{\FCC(A,t,B)\}$ be the event that at time t there is an open path from
$A$ to $B$ and $\{\FCbetter(A,B)\}$
be the analogous event for ordinary percolation.
(If $B=\infty$, this has the obvious meaning.) 
In the context of trees with a distinguished root,
$\FCbetterTree(A,B)$ will mean that there is a path of open edges
connecting $A$ to $B$ along which the distance to the root is monotone increasing.
The notation $\FCCTree(A,t,B)$ is similarly defined.
\\
(2). We use $\asymp$ to denote the relationship between two quantities whose ratio is bounded away
from both 0 and $\infty$. \\
(3). $O(1)$ will denote a function bounded away from $\infty$,
$o(1)$ will denote a function approaching 0,
and $\Omega(1)$ will denote a function bounded away from 0. 

\medskip\noindent
{\bf  Convention:}
The edges are defined to be on at the times at which they
change state; in this way, the set of times an edge is on is a closed set.
As explained in \cite{HPS}, this modification is of no significance,
but allows some notational simplification in some topological arguments.

\medskip
The rest of the paper is organized as follows.
In Section~\ref{sec:2}, we prove Theorem~\ref{th:newexamples}.
In Section~\ref{sec:3}, we prove two lemmas
which will be needed for the proof of Theorem~\ref{th:sstrees1}.
We prove Theorem~\ref{th:sstrees1} in Section~\ref{sec:4},
Theorem~\ref{th:dyntrans} in Section~\ref{sec:5} and Theorem
\ref{th:pivotal} in Section~\ref{sec:6}. In Section~\ref{sec:01law},
we prove a certain 0-1 law for the evolution of the process
and finally we list some open questions in Section~\ref{sec:questions}.

\section{Two Examples}\label{sec:2}

\def\expectation{{\mathbb E}}
\def\proba{{\mathbb P}}
\def\integers{{\mathbb Z}}
\def\Lsquare{\mathbb Z^2}
\def\uproba{{P}}
\def\bfpsi{{\bf\Psi}}
\def\bfe{{\bf E}}
\def\cluster{{\cal{C}}}
\def\exponential{\mathrm{e}}
\def\d{\, \mathrm{d}}

The idea in the construction of the examples is rather simple; we
take the planar square lattice $\Lsquare$ and replace each edge by an appropriate
graph, with different graphs for different edges. For the example without
exceptional times, we will want the connection along the corresponding
graphs to be rather stable, while for the example with exceptional times,
we will want the connections to switch quickly.
The following lemma gives the existence of the necessary building blocks
for both examples. It contains a variant of Lemma 2.3 in \cite{HPS} with the crucial
difference being that the degrees are now bounded.

\begin{lemma}\label{l.switcheroo}
There is a sequence of finite graphs $G_j$ and pairs of vertices $x_j$ and $y_j$ 
in $G_j$, such that the following properties hold: 
\begin{enumerate} 
\item $\proba_{\frac{1}{2}}^{G_j}\bigl(\FCbetter(x_j,y_j)\bigr)>\frac{2}{3}$ 
for all $j$,
\item $\lim_{j\to\infty}\proba_{p}^{G_j}\bigl(\FCbetter(x_j,y_j)\bigr)=0$ for all $p<\frac{1}{2}$,
\item for every $\eps>0$ we have 
$$
\lim_{j\to\infty} \bfpsi^{G_j}_{\frac{1}{2}}\Bigl(
\bigcap_{t\in[0,\eps]}\{
\FCC(x_j,t,y_j)\}
\Bigr)=0\,,
$$
\item and there is some finite upper bound for the degrees of the vertices in $G_j$ (the
bound does not depend on $j$).
\end{enumerate}
\end{lemma}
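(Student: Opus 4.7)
The construction is a bounded-degree analog of the parallel-paths graphs used in \cite[Lemma~2.3]{HPS}. Take the ``middle'' of $G_j$ to be a rectangle $R_j=[0,w_j]\times[0,h_j]\cap\Z^2$ of width $w_j\to\infty$ and height $h_j$ polynomial in $w_j$ with $h_j/w_j\to\infty$, and attach to its two short sides bounded-degree funnel gadgets (for instance $k$-ary trees with $k\ge 3$) rooted at the single terminals $x_j,y_j$, whose leaves are identified bijectively with the boundary vertices of $R_j$. Each funnel is a supercritical Galton--Watson tree at $p=1/2$, and all degrees are at most $k+4$, so property~(4) is built in.

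For~(2), at $p<1/2$ every open $x_j$--$y_j$ path crosses $R_j$ left-to-right, so by the Aizenman--Barsky--Menshikov subcritical exponential-decay theorem, $\proba_p(\FCbetter(x_j,y_j))\le h_j\,e^{-c_p w_j}\to 0$. For~(1), at $p=1/2$ the Russo--Seymour--Welsh theorem iterated via FKG gives $\proba_{1/2}(\text{left-right crossing of }R_j)\to 1$ as $h_j/w_j\to\infty$, and independently each supercritical Galton--Watson funnel succeeds with probability bounded below; combining these three independent pieces yields $\proba_{1/2}^{G_j}(\FCbetter(x_j,y_j))>2/3$ for appropriate parameters, while keeping this probability bounded away from $1$ (as needed for~(3)).

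The delicate property is~(3). Setting $A_j(t)=\{\FCC(x_j,t,y_j)\}$ and $L_j=\int_0^\eps\One_{\neg A_j(t)}\,dt$, stationarity gives $\E[L_j]=\eps q_j$ with $q_j:=\proba(\neg A_j)$ bounded below, and
\[
\operatorname{Var}(L_j)\;=\;\int_0^\eps\!\int_0^\eps \operatorname{Cov}\bigl(A_j(s),A_j(t)\bigr)\,ds\,dt.
\]
For each fixed $u=|t-s|>0$, the statement $\operatorname{Cov}(A_j(0),A_j(u))\to 0$ as $j\to\infty$ is precisely noise sensitivity of $A_j$ at noise level $1-e^{-u}$. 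The Benjamini--Kalai--Schramm theorem yields this for the $\Z^2$ rectangle crossing $C_j$, and one then transfers it to $A_j$ using the factorization $A_j=C_j\cdot F_j^L\cdot F_j^R$ over disjoint edge sets together with the ``low-frequency'' nature of the funnel factors. Dominated convergence gives $\operatorname{Var}(L_j)\to 0$, and Chebyshev yields $\proba(L_j=0)\to 0$; since $A_j(\cdot)$ is piecewise constant with only finitely many jumps in $[0,\eps]$, $\{L_j=0\}$ coincides with $\bigcap_{t\in[0,\eps]}\{A_j(t)\}$, yielding~(3).

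The main obstacle is precisely this noise-sensitivity transfer: the funnel-success events $F_j^L,F_j^R$ are essentially ``stable'' (close to juntas), so a naive factor-by-independence estimate leaves the covariance controlled by the non-vanishing covariances of the funnels. Closing the gap requires either (a) a refined construction in which the funnels themselves become noise sensitive, for instance by making them depend on many weakly coupled subtrees whose ``noise-sensitive content'' grows with $j$, or (b) a direct argument via pivotal-edge counts for $A_j$ in the spirit of \cite{SS}, exploiting that the dominant contribution to the total influence comes from the noise-sensitive rectangle edges.
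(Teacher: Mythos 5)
Your approach differs substantially from the paper's and, as you yourself flag at the end, it leaves property~(3) unproved; the gap is genuine and not merely cosmetic. The paper's construction is much more elementary and sidesteps the noise-sensitivity question entirely. It takes the square grid with each edge replaced by $m$ parallel edges (so the grid is comfortably supercritical at $p=1/2$), selects a finite piece $H_j$ in which $v_0$ connects to each of $9\cdot 2^j$ designated vertices with probability $\ge 0.98$, takes two disjoint copies of $H_j$, and joins corresponding designated vertices by $9\cdot 2^j$ disjoint paths (``bridges'') of length $j$, with $x_j,y_j$ being the two copies of $v_0$. Property~(3) then falls out of an elementary second-moment computation on $X^j:=\int_0^\eps \One\{\text{some bridge open at }t\}\,dt$: given that a particular bridge is open at time $s$, the chance it is also open at time $t\ne s$ is $\bigl(\tfrac12+\tfrac12 e^{-|t-s|}\bigr)^j\to 0$, so $\operatorname{Var}(X^j)\to 0$ while $\E[X^j]\to \eps\,(1-e^{-9})<\eps$, forcing $\bfpsi^{G_j}_{1/2}\bigl(X^j=\eps\bigr)\to 0$. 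No RSW, no BKS, no noise-sensitivity transfer.

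The gap in your proposal is exactly where you place it: the funnel success events $F^L_j,F^R_j$ are \emph{noise stable}, not noise sensitive. For a fixed supercritical branching process, the event of reaching depth $n$ is, as $n\to\infty$, essentially determined by the first $O(1)$ levels of the tree (it is close to a junta), so $\operatorname{Cov}\bigl(F^L_j(0),F^L_j(u)\bigr)$ does not vanish as $j\to\infty$ for any fixed $u>0$. Since you need $\proba_{1/2}^{G_j}\bigl(\FCbetter(x_j,y_j)\bigr)$ bounded away from $1$ for property~(3) to be nonvacuous, the funnel factors have nontrivial probability and their persistent positive covariance ruins $\operatorname{Var}(L_j)\to 0$ regardless of the rectangle's noise sensitivity. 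Neither of your proposed remedies is carried out, and (a) in particular would have to be reconciled with bounded degree and property~(1). As written, the argument does not establish property~(3) and hence does not prove the lemma. The design principle illustrated by the paper's proof is to engineer the bottleneck so that the connecting event is a union over many independent, individually unlikely pieces that rerandomize quickly (long paths), which makes the second-moment argument work directly without appeal to any noise-sensitivity theorem.
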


\begin{proof}
 Let $H$ be obtained from the square grid in the plane
by replacing each edge by $m$ parallel edges, where $m$ is chosen
so that the probability that the origin percolates in $H$ at $p=1/2$ is at least $0.99$.
Let $v_i$ denote the vertex $(i,0)$ of $H$. Then for every $i$ we have
$\proba^{H}_{1/2}\bigl(\FCbetter(v_0,v_i)\bigr)\ge (0.99)^2>0.98$.
Hence, there is a finite subgraph $H_j$ of $H$ such that 
$\proba^{H_j}_{1/2}\bigl(\FCbetter(v_0,v)\bigr)\ge 0.98$ holds for every
$v\in A_j$, where $A_j:=\{v_i:1\le i\le 9\cdot 2^j\}$.
The graph $G_j$ is obtained by taking two disjoint copies of $H_j$
and connecting each of the vertices corresponding to $v_i\in A_j$ in
one copy to the vertex corresponding to $v_i$ in the other copy by a path
of length $j$, where the paths are of course disjoint.
The vertex $x_j$ is chosen as $v_0$ in one copy of $H_j$, while $y_j$ is
$v_0$  in the other copy. 
 The paths of length $j$ in $G_j$ connecting one copy of $H_j$
to the other will be called {\sl bridges}.

We now verify that $G_j$ satisfies the required properties. 
Let $B_j$ denote the set of vertices in $A_j$ connected to $v_0$ by an open path in $H_j$.
Since $\proba^{H_j}_{1/2}\bigl(\FCbetter(v_0,v)\bigr)\ge 0.98$
for all $v\in A_j$,
we have $\proba^{H_j}_{1/2}\bigl(|B_j|< (2/3)\,|A_j|\bigr)<0.9$.
This implies that in $G_j$ at $p=1/2$ with probability at least $(0.9)^2$
we have that the endpoints of at least $1/3$ of the bridges
are connected to $x_j$ within $x_j$'s copy of $H_j$ and to $y_j$ within
$y_j$'s copy of $H_j$. On this event, the conditional probability that $x_j$
and $y_j$ are not connected is at most 
$$(1-2^{-j})^{\frac{|A_j|}{3}}\le \exp(-2^{-j})^{\frac{|A_j|}{3}}= e^{-3}.$$
Thus, we get
$\proba_{1/2}^{G_j}(x_j\leftrightarrow y_j) \ge (0.9)^2\,(1-e^{-3})>2/3$,
proving 1.

If $p<1/2$, then the expected number of bridges that are open in $G_j$ is $|A_j|\,p^j=9\cdot 2^j\cdot p^j\to 0$
as $j\to\infty$, which proves 2.

In order to prove 3, fix some $\eps>0$, and consider dynamical percolation at $p=1/2$ on
$G_j$.
Let $t,s\in[0,\eps]$ satisfy $s\ne t$, and 
let $X^j_t$ denote the event that at time $t$ there is some bridge
in $G_j$ that is open.
Fix some ordering of the bridges in $G_j$, and let
$X^j_t(i)$ denote the event that the $i$'th bridge is open at time $t$.
Also let $\hat X^j_t(i)$ be the event that the $i$'th bridge is open 
at time $t$ and this does not hold for any smaller $i$.
Note that for every fixed $i$,
$$
\bfpsi_{1/2}^{G_j}\bigl({X^j_t\setminus X^j_t(i)\mid \,\hat X^j_s(i)}\bigl) \le \bfpsi_{1/2}^{G_j}\bigl(X^j_t\bigr).
$$
Therefore,
\begin{multline*}
\bfpsi_{\frac{1}{2}}^{G_j}\bigl({X^j_t,\,\hat X^j_s(i)}\bigl)=
\bfpsi_{\frac{1}{2}}^{G_j}\bigl({X^j_t\setminus X^j_t(i),\,\hat X^j_s(i)}\bigl)+
\bfpsi_{\frac{1}{2}}^{G_j}\bigl(X^j_t(i),\,\hat X^j_s(i)\bigl)
\\
\le
\bfpsi_{\frac{1}{2}}^{G_j}\bigl(X^j_t\bigr)
\bfpsi_{\frac{1}{2}}^{G_j}\bigl( \hat X^j_s(i)\bigl)
+
\bfpsi_{\frac{1}{2}}^{G_j}\bigl(X^j_t(i),\,\hat X^j_s(i)\bigl)\,.
\end{multline*}
On the other hand, the conditional probability of $X^j_t(i)$ given
$\hat X^j_s(i)$ does not depend on $i$ and goes to zero as $j\to\infty$ while $s\ne t$ are held fixed.
Thus,
$$
\bfpsi_{\frac{1}{2}}^{G_j}\bigl({X^j_t,\,\hat X^j_s(i)}\bigl)\le
\bfpsi_{\frac{1}{2}}^{G_j}\bigl(X^j_t\bigr)
\bfpsi_{\frac{1}{2}}^{G_j}\bigl( \hat X^j_s(i)\bigl) + o(1)\, 
\bfpsi_{\frac{1}{2}}^{G_j}\bigl(\hat X^j_s(i)\bigl)\,.
$$
As $X^j_s$ is the disjoint union of the events $\hat X^j_s(i)$, by summing
the above over $i$, we obtain
$$
\bfpsi_{\frac{1}{2}}^{G_j}\bigl({X^j_t,\,X^j_s}\bigl)\le
\bfpsi_{\frac{1}{2}}^{G_j}\bigl(X^j_t\bigr)
\bfpsi_{\frac{1}{2}}^{G_j}\bigl( X^j_s\bigl)+o(1)\,,
$$
as $j\to\infty$.

Set $X^j:=\int_{0}^\eps \One_{X^j_t}\,dt$. Fubini and the dominated convergence
theorem now imply that $\limsup_{j\to\infty}\Eb{(X^j)^2}-\Eb{X^j}^2\le 0$; that is,
the variance of $X^j$ tends to $0$. Since
$$
\Eb{X^j}=\eps\,\bfpsi_{\frac{1}{2}}^{G_j}({X^j_0})=\eps\,\bigl(1-(1-2^{-j})^{|A_j|}\bigr)
\underset {j\to\infty }\longrightarrow \eps\,(1-e^{-9})\,,
$$
 and the right hand side is smaller than $\eps$, it follows
that $\bfpsi_{1/2}^{G_j}({X^j=\eps})$ tends to $0$ as $j\to\infty$. This proves 3.

Claim 4 is obvious from the construction.
\QED
\end{proof}

\proofof{Theorem~\ref{th:newexamples}}
Both examples are obtained by replacing each edge $[x,y]$ in the square 
lattice $\Lsquare$ by a copy of some $G_j$, with $x_j$ identified
with $x$ and $y_j$ identified with $y$. The difference between the two 
examples has to do with the choice of $j$ for the different edges.

We start by proving (i). By property 1 of Lemma~\ref{l.switcheroo},
it follows that for each $j$ there is some positive integer $n_j>0$ such that
$$
\bfpsi^{G_j}_{\frac{1}{2}}\Bigl(\bigcap_{t\in[0,\frac{1}{n_j}]}\{\FCC(x_j,t,y_j)\}\Bigr)>
\frac{3}{5}\,.
$$
We may assume without loss of generality that the sequence $\{n_j\}$
is increasing in $j$.
We now define inductively an increasing sequence $\{R_j\}$.
Set $n_j^*:= n_{j+2}$.
For any two radii $0<r<r'$, let $\mathcal A(r,r')$ denote
the event that there is an open cycle in $\Lsquare$ separating
$\partial B(0,r)$ from $\partial B(0,r')$
where $\partial B(0,r):=\{x:|x|_\infty=r\}$ and $|x|_\infty$ denotes the $L_\infty$ 
norm of $x$. Let $R_0$ be so large that
$$
\proba_{\frac{3}{5}}(\FCbetter({B(0,R_0)},\infty)) \ge \frac{1}{2}\,.
$$
For all $j>0$, given $R_{j-1}$, we choose $R_j>R_{j-1}$ sufficiently large so that
$$
 \proba_{\frac{3}{5}}\bigl( 
    \FCbetter({B(0,R_j)}, \infty),\,\mathcal A(R_{j-1},R_j) \bigr)
 \ge 1- 2^{-j}\,(n_j^*)^{-1}.
$$
Let $G$ be obtained from $\Lsquare$ by replacing, for each $j>0$, each edge $e$
in the annulus $B(0,R_j)\setminus B(0,R_{j-1})$ by a new copy of $G_j$,
where $x_j$ and $y_j$ are identified with the endpoints of $e$.
By property 2 of the lemma, it follows that at every $p<1/2$, Bernoulli percolation
on $G$ a.s.\ has no infinite cluster. Hence $p_c(G)\ge 1/2$.

We now consider dynamical percolation on $G$ with parameter $p=\frac{1}{2}$,
and show that $\bfpsi^G_{1/2}$-a.s.\ there is an infinite percolation cluster at all times.
This, in particular, implies that $p_c(G)\le 1/2$; and hence $p_c(G)=1/2$.

For $I\subseteq[0,\infty)$, let $\mathcal A_j(I)$ denote the event that at all times 
$t\in I$ there
is an open cycle in $G$ separating $\partial B(0,R_j)$ from $\partial B(0,R_{j-1})$ and an
open path in $G$ connecting $\partial B(0,R_{j-1})$ with $\partial B(0,R_{j+1})$.
Then $\bfpsi^G_{1/2} \{\mathcal A_j([0,\, 1/n_{j+1}])\} \ge 1-2^{-j+2}/n^*_{j-1}$,
whence $\bfpsi^G_{1/2} (\mathcal A_j([0,\, 1])) \ge 1-2^{-j+2}$.
Now note that if $\bigcap_{j>k} \mathcal A_j([0,\,1])$ holds
for some $k$, then there is percolation in $G$ for every $t\in[0,1]$.
Since $\bfpsi^G_{1/2}\Bigl( \bigcap_{j>k} \mathcal A_j([0,\,1])\Bigr)\ge 1-2^{-k+2}$,
this gives $\bfpsi^G_{1/2} \Bigl(\bigcap_{t\in[ 0,1]} \cluster_t\Bigr) =1$,
which implies (i).

\medskip

We now turn to the proof of (ii). 
Using Lemma~\ref{l.switcheroo} together with the proof of
the second part of Theorem 1.2 in \cite{HPS}, it is easily seen
that if we replace the $i$th edge by $G_{j_i}$ with the sequence
$\{j_i\}$ growing to infinity sufficiently fast, we obtain an example of the
desired form.
\QED

\section{Some lemmas}\label{sec:3}

We now consider a spherically symmetric tree with spherically symmetric edge
probabilities. As in the introduction, $W_n$ will denote the number of 
vertices in $T_n$ that are connected to the root, and $w_n$ denotes the
expectation of $W_n$.

By Theorem 2.3 of~\cite{Ly} (together with the proof of
Theorem 2.4 in that paper and the fact that
for a spherically symmetric kernel, the measure that minimizes
energy is the uniform measure, a fact which in turn is obtained using
convexity of energy together with symmetry), it follows that
\begin{equation} \label{e:Lyonsagain}
\frac{\ew n^2}{E[W_n^2]} \le P(W_n >0)\le \frac{2\ew n^2}{E[W_n^2]}.
\end{equation}
The second inequality yields 
\begin{equation}\label{e:L2bounded}
E[W_n^2|W_n>0]\le 2 E[W_n|W_n>0]^2,
\end{equation}
which will be useful below.

\begin{lemma} \label{lemma:array}
Consider an indexed collection $\{X_{i,j}\}_{i\ge 1, 1\le j \le N_i}$
of nonnegative mean 1 random variables such that (1) for each $i$,
$\{X_{i,j}\}_{1\le j \le N_i}$ are i.i.d. and (2) the entire family of
random variables is uniformly integrable. Then for each $\epsilon > 0$,
there is $c>0$ such that for each $i$,
$$
P\left(\sum_{j=1}^{N_i} X_{i,j}\le N_i(1-\epsilon)\right) \le e^{-c N_i}.
$$
\end{lemma}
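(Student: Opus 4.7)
The plan is a standard Chernoff argument applied to truncated variables, with uniform integrability providing a single truncation level that works for all rows simultaneously.

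First I would truncate at some level $M$ to be chosen. Set $Y_{i,j} := X_{i,j} \wedge M$. Since $Y_{i,j} \le X_{i,j}$ pointwise,
$$
P\Bigl(\sum_{j=1}^{N_i} X_{i,j} \le N_i(1-\eps)\Bigr) \le
P\Bigl(\sum_{j=1}^{N_i} Y_{i,j} \le N_i(1-\eps)\Bigr).
$$
Next, I would use uniform integrability to pin down $M$. By hypothesis, there exists $M=M(\eps)$, not depending on $i$, such that $E[X_{i,1}\One_{X_{i,1}>M}]<\eps/2$ for all $i$. Since $(X-M)^+\le X\One_{X>M}$, this yields $E[Y_{i,1}] \ge 1-\eps/2$ for every $i$.

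Third, since the $Y_{i,j}$ are i.i.d.\ in $j$ and bounded, taking values in $[0,M]$, with mean at least $1-\eps/2$, a deviation of $\sum_j Y_{i,j}$ below $N_i(1-\eps)$ is a deviation of at least $N_i\eps/2$ below its own mean. Hoeffding's inequality then gives
$$
P\Bigl(\sum_{j=1}^{N_i} Y_{i,j} \le N_i(1-\eps)\Bigr) \le
\exp\!\left(-\frac{2\,(N_i\eps/2)^2}{N_i M^2}\right)
= \exp\!\left(-\frac{N_i\eps^2}{2M^2}\right),
$$
so the conclusion holds with $c := \eps^2/(2M^2)$, which depends only on $\eps$ (through $M$).

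The only delicate point is the uniformity of $c$ in $i$: if one allowed the distributions of $X_{i,1}$ to concentrate their mean in arbitrarily heavy upper tails as $i$ varies, no fixed truncation level would preserve enough of the mean, and the Hoeffding constant would blow up. Uniform integrability is precisely the condition that rules this out and lets a single $M$ work across the whole array, so the above argument goes through without further surprises.
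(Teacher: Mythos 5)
Your proof is correct and follows essentially the same approach as the paper: truncate at a level $M$ chosen via uniform integrability so the truncated mean stays above $1-\eps/2$, then apply a concentration inequality for bounded i.i.d.\ variables. The only cosmetic difference is that you invoke Hoeffding's inequality explicitly where the paper says ``standard Chernoff bound arguments,'' but these amount to the same thing.
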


\proof
Let
$\epsilon > 0$. By uniform integrability, there exists $h=h(\epsilon)$ such
that for all $i$ and $j$,
$$
E(X_{i,j}\wedge h) \ge 1-\frac{\epsilon}{2}.
$$
We then have
\begin{multline*}
P\left(\sum_{j=1}^{N_i} X_{i,j}\le N_i(1-\epsilon)\right) \le
P\left(\sum_{j=1}^{N_i} X_{i,j}\wedge h \le N_i(1-\epsilon)\right)
\\\le
P\Bigl(\sum_{j=1}^{N_i} X_{i,j}\wedge h \le
N_i\bigl(E(X_{i,j}\wedge h)-\frac{\epsilon}{2}\bigr)\Bigr).
\end{multline*}
As we now have bounded random variables, the standard
Chernoff bound arguments allow us to bound the latter
by $e^{-c N_i}$ for some fixed $c=c(\epsilon,h)>0$. \QED

\begin{lemma} \label{lemma:staysclose}
Fix a connected graph $G$ and $x\in V(G)$. Let
$B_M:=\{y:d_G(x,y)\le M\}$ where $d_G$ is the graph distance.
Then the following are equivalent. \\
(i).
$$
\bPsi_p(\, \calC_t \, \mbox{ occurs for every  } \, t \, )=1.
$$
(ii).
$$
P(\exists M: \FCC(B_M,t,\infty)\,\,\forall t\in [0,1])=1.
$$
(iii).
$$
P(\FCC(x,t,\infty)\,\,\forall t\in [0,1])> 0.
$$
\end{lemma}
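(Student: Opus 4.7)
The plan is to prove the implications $(\text{ii}) \Rightarrow (\text{i})$, $(\text{ii}) \Rightarrow (\text{iii})$, $(\text{iii}) \Rightarrow (\text{ii})$, and $(\text{i}) \Rightarrow (\text{ii})$, which together give the three-way equivalence; the last is the technical core. The first two are easy. For $(\text{ii}) \Rightarrow (\text{i})$, stationarity of $\bPsi_p$ makes ``$\calC_t$ for all $t \in [n,n+1]$'' a probability-$1$ event for every $n \in \Z$, and intersecting over $n$ gives (i). For $(\text{ii}) \Rightarrow (\text{iii})$, fix $M_0$ with $\Prob(B_{M_0} \text{ percolates at every } t \in [0,1]) > 0$, choose a finite path $\gamma$ from $x$ to a vertex of $B_{M_0}$, and let $F$ be the edge set of the induced subgraph on $B_{M_0}$. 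Apply FKG to the three increasing events ``$B_{M_0}$ percolates throughout $[0,1]$'', ``every edge of $\gamma$ is on for all $t \in [0,1]$'', ``every edge of $F$ is on for all $t \in [0,1]$''; each has positive probability, so the triple intersection does, and on it $x$ sits in $C_\infty(\omega_t)$ at every $t$.

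For $(\text{iii}) \Rightarrow (\text{ii})$ I would use a Kolmogorov $0$-$1$ law after showing that (ii) is a tail event for the product measure over edge trajectories. Suppose (ii) holds for $\omega$ with witness $M_0$ and that $\omega'$ differs from $\omega$ only on the trajectories of finitely many edges $\{e_1,\dots,e_k\}$. Choose $M' \geq M_0$ so that $B_{M'}$ contains both endpoints of each $e_i$. At each $t \in [0,1]$, delete from the infinite cluster $C(t)$ of $\omega_t$ those $e_i$ currently in its induced subgraph; at least one infinite component $C^-(t)$ remains, and since $C^-(t)$ uses only edges common to $\omega_t$ and $\omega'_t$ it stays connected in $\omega'_t$ and lies in $C_\infty(\omega'_t)$. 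Either no $e_i$ was removed, so $C^-(t) = C(t)$ still contains the $B_{M_0}$-witness, or $C^-(t)$ must contain an endpoint of some removed $e_i$ (otherwise nothing would have ``detached'' it), and that endpoint lies in $B_{M'}$. Hence $B_{M'}$ percolates in $\omega'_t$ for every $t$, so (ii) is tail. Kolmogorov gives $\Prob((\text{ii})) \in \{0,1\}$, and since $\{\FCC(x,t,\infty)\ \forall t \in [0,1]\} \subseteq (\text{ii})$, condition (iii) forces $\Prob((\text{ii})) = 1$.

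The step $(\text{i}) \Rightarrow (\text{ii})$ is where I expect the main difficulty. A variant of the tail argument above shows that $\{\calC_t \ \forall t \in [0,1]\}$ is itself a tail event, and (i) gives it probability $1$; it therefore suffices to rule out the scenario where (ii) fails a.s.\ while $\calC_t$ holds at every $t$. The obstruction is genuinely topological: the closed, nondecreasing sets $Q_M := \{t \in [0,1]: B_M \text{ percolates at } t\}$ have union $[0,1]$ a.s., yet an example like $Q_M = \{0\} \cup [1/M,1]$ shows this alone does not force $Q_M = [0,1]$ for some $M$. My plan is to extract a probabilistic contradiction: if (ii) fails a.s., compactness produces a subsequential limit $t^* \in [0,1]$ of times $t_M$ at which $B_M$ fails, and (i) combined with closedness of $Q_M$ pins $t^*$ onto the boundary $\partial Q_M$ for every sufficiently large $M$. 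These boundary points live in the countable set of Poisson flip times, and this structural constraint ought to be incompatible with the Markov property and stationarity of the dynamics, yielding the needed contradiction.
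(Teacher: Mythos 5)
Your proof of the easy implications is correct but takes a different route than the paper. The paper closes the cycle (iii)$\Rightarrow$(i)$\Rightarrow$(ii)$\Rightarrow$(iii): (iii)$\Rightarrow$(i) is immediate from Kolmogorov's 0-1 law, (ii)$\Rightarrow$(iii) is left to the reader, and (i)$\Rightarrow$(ii) is the serious step. You instead verify (ii)$\Rightarrow$(i) via stationarity, (ii)$\Rightarrow$(iii) via FKG, and (iii)$\Rightarrow$(ii) by showing (ii) is a tail event; the tail-event verification (cutting the finitely many altered edges out of an infinite cluster and observing that a surviving infinite component contains either the original witness or an endpoint of a removed edge) is detailed and correct. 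These are reasonable alternatives for the easy part.

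The genuine gap is exactly where you expect it, namely (i)$\Rightarrow$(ii), and your sketch does not close it. You correctly observe that if (ii) fails, one can choose $t_M\notin Q_M$ and extract $t^*\in[0,1]$ with $t^*\in\partial Q_M$ for all large $M$, using that the $Q_M$ are closed, increasing in $M$, and that (i) forces $t^*\in Q_M$ for $M$ large enough. But the assertion that such a $t^*$ must be a Poisson flip time is unjustified and in fact false. Each $Q_M$ is a countable intersection of finite-range connection events, and a boundary point of $Q_M$ can arise from an accumulation of flips of distinct, increasingly distant edges: as $t_n\uparrow t^*$ the pivotal edge whose closure severs $B_M$ from $\infty$ can migrate outward, so that no single edge switches at $t^*$. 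There is therefore no countability obstruction, and the appeal to the Markov property and stationarity does not produce the needed contradiction.

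The tool you are missing is the Baire Category Theorem. The paper argues: if (ii) fails, then the 0-1 law and positive association give, for every $\delta>0$,
$$
P\bigl(\exists M:\ B_M\stackrel{t}{\leftrightarrow}\infty\ \text{for all}\ t\in[0,\delta]\bigr)=0\,.
$$
Countable additivity then makes each open set $U_v:=\{t\in[0,1]: v\text{ does not percolate at }t\}$ a.s.\ dense, and Baire yields $\bigcap_v U_v\ne\emptyset$ a.s.\ --- a time at which no vertex percolates --- contradicting (i). The Baire step is precisely what upgrades ``for each $v$, the bad times for $v$ are dense'' to ``there is a single time that is bad for all $v$ simultaneously,'' which is the global statement that your compactness-plus-flip-times sketch cannot reach.
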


\proof
The implication
(iii) $\Rightarrow$ (i) is immediate from Kolmogorov's 0-1 Law.
The implication
(ii) $\Rightarrow$ (iii) is easy and left to the reader.
We now show that (i) implies (ii).
If (ii) is false, Kolmogorov's 0-1 Law
implies that the event in (ii) has probability 0. 
Positive association of the process
and the above 0-1 Law then would yield that for all $\delta > 0$,
\begin{equation} \label{e:delta}
P(\exists M: \FCC(B_M,t,\infty)\; \forall t\in [0,\delta])=0\,.
\end{equation}

Now, for each vertex $v$, let $U_v$ be the open set of times in
$[0,1]$ at which $v$ is not percolating.
Countable additivity
and~(\ref{e:delta})
easily imply that a.s.\ each $U_v$ is dense. The Baire Category
Theorem implies that a.s.
$$
\bigcap_{v}U_v \neq \emptyset\,.
$$
However, this intersection is exactly the set of nonpercolating times
in $[0,1]$ and hence (i) is false. \QED

\medskip\noindent
{\bf Remarks:}  Observe that given any graph which percolates at criticality
and for which there are exceptional nonpercolating times, using the
$U_v$'s as above, the Baire Category Theorem gives that
the set of nonpercolating times in $[0,1]$ is a dense $G_\delta$ set of zero measure.
An additional use of the Baire Category Theorem tells us that if we hook up
a finite number of such graphs at a common vertex,
there will still be nonpercolating times and they will also form a dense
$G_\delta$ of zero measure. This situation is very different
from the case where one looks at time sets corresponding to the times
at which a tree, which does not percolate at criticality
(in static percolation), percolates;
such time sets do not necessarily intersect each other.

\section{Proof of Theorem \ref{th:sstrees1}}\label{sec:4}

We now begin with the
\proofof{Theorem~\ref{th:sstrees1}(i)}
Recall that $\rho$ denotes the root of the tree.
Fix an $\alpha > 2$,
and assume that $\lim_n \frac{\ew n}{n(\log n)^{\alpha}}=\infty$.
Choose $\epsilon >0$ such that
$2 +2\,\epsilon <\alpha$. Let $n_k:=2^{2^k}$. (So $n_0=2$ and $n_{k+1}=n^2_{k}$.)
For each $k$ and each $i\in \{1,\dots,n_k^2\}$, let $I^k_i=[(i-1)/n_k^2,i/n_k^2]$.
Let $A_i^k:=\{x\in T_{n_k}: \FCC(\rho,t,x)\;\forall t\in I^k_i\}$,
and let $G_k$ denote the event that $|A_i^k| \ge \ew{ n_k}/(\log n_k)^{\eps}$
holds for every $i\in\{1,2,\dots,n_k^2\}$.
We need to obtain a good bound on $P(G^c_{k+1}|{\mathcal F}_{n_k})$ on the event $G_k$,
where ${\mathcal F}_n$ is the $\sigma$-algebra generated by the evolution
of the first $n$ levels of the tree. 
The key proposition, whose proof we give afterwards, is the following.

\begin{proposition} \label{pr:key}
There exists $\gamma >1$ so that for all large $k$,
if $A\subseteq T_{n_k}$ is fixed with
$|A|\ge \ew {n_k}/(\log n_k)^\epsilon$, then 
$$
P\Bigl(\bigl|\{x\in T_{n_{k+1}}:
\FCCTree(A,t,x)\,\, \forall t \in I^{k+1}_1\}\bigr|
\le \ew {n_{k+1}}/(\log n_{k+1})^\epsilon\Bigr)
\le e^{-(\log n_k)^\gamma}.
$$
\end{proposition}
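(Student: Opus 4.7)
The plan is to reduce the dynamical problem to a perturbed static one, invoke Lyons' second-moment estimates on disjoint subtrees, and then handle the concentration in two stages. Since edges carry independent Poisson clocks, the event that edge $e$ is open throughout $I^{k+1}_1$ is Bernoulli with parameter $\tilde p_e := p_e\, e^{-(1-p_e)|I^{k+1}_1|}$, \emph{independently across edges}. Therefore $Y_x := |\{y \in T_{n_{k+1}}\cap T^x : \FCCTree(x,t,y)\,\forall t \in I^{k+1}_1\}|$ has exactly the law of the static percolation count on the subtree $T^x$ with weights $\tilde p_e$. For $x \in A$ the subtrees $T^x$ are disjoint, so the $Y_x$'s are i.i.d.\ (using spherical symmetry) and the quantity in the proposition is $\sum_{x\in A} Y_x$. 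Since $|I^{k+1}_1| = n_k^{-4}$ while any path of $T^x$ down to level $n_{k+1}$ has at most $n_k^2$ edges, the product $\prod_e \tilde p_e/p_e$ along such a path is $1-o(1)$, so the level-$j$ expected count in $T^x$ under the dynamical law is $(1+o(1))\,\ew{n_k+j}/\ew{n_k}$ uniformly in $j$.

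Applying~(\ref{e:lyonsstrong}) to $T^x$ yields $P(Y_x > 0)\asymp \bigl(\ew{n_k}\sum_{j=n_k+1}^{n_{k+1}} 1/\ew j\bigr)^{-1}$. The hypothesis $\ew n/(n(\log n)^\alpha)\to \infty$ forces $\sum_{j>n_k} 1/\ew j \le \psi(n_k)^{-1}(\log n_k)^{1-\alpha}$ for some $\psi\to\infty$, so $P(Y_x > 0)\gtrsim \psi(n_k)(\log n_k)^{\alpha-1}/\ew{n_k}$. Combined with $|A|\ge \ew{n_k}/(\log n_k)^\epsilon$ this yields $|A|\,P(Y_x > 0) \ge \psi(n_k)(\log n_k)^{\alpha-1-\epsilon}$. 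Since $\alpha > 2+2\epsilon$ we have $\alpha-1-\epsilon > 1$, so this quantity dominates $(\log n_k)^\gamma$ for any fixed $\gamma\in(1,\alpha-1-\epsilon)$.

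Concentration proceeds in two stages, because $Y_x/E[Y_x]$ is \emph{not} uniformly integrable in $k$ (its conditional mean given $Y_x > 0$ is of order $1/P(Y_x>0)\to\infty$). First, let $N := |\{x\in A: Y_x > 0\}|$, a binomial with mean $|A|\,P(Y_x>0)$; a Chernoff bound gives $P(N \le (1-\eta)|A|P(Y_x>0)) \le e^{-c_1(\eta)|A|P(Y_x>0)} \le e^{-(\log n_k)^\gamma}$. Second, applying~(\ref{e:L2bounded}) to $T^x$ with weights $\tilde p_e$ yields $E[Y_x^2\mid Y_x>0] \le 2\,E[Y_x\mid Y_x>0]^2$, so on $\{Y_x > 0\}$ the normalized variable $U_x := Y_x/E[Y_x\mid Y_x>0]$ has mean $1$ and second moment at most $2$, \emph{uniformly in $k$}. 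Conditionally on $N$ and on the set $\mathcal S := \{x: Y_x>0\}$, the $(U_x)_{x\in\mathcal S}$ are i.i.d.\ drawn from a uniformly $L^2$-bounded (hence uniformly integrable) family, so Lemma~\ref{lemma:array} gives
\[
P\bigl(\textstyle\sum_{x\in\mathcal S} Y_x \le (1-\eta')\,N\,E[Y_x\mid Y_x>0] \,\bigm|\, N,\mathcal S\bigr) \le e^{-c_2(\eta')N}.
\]

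On the intersection of the two success events, $\sum_{x\in A} Y_x \ge (1-\eta)(1-\eta')|A|\,E[Y_x] \ge (1-\eta)(1-\eta')(1-o(1))\,2^\epsilon\,\ew{n_{k+1}}/(\log n_{k+1})^\epsilon$, using $E[Y_x] = (1+o(1))\ew{n_{k+1}}/\ew{n_k}$ and $(\log n_{k+1})^\epsilon = 2^\epsilon(\log n_k)^\epsilon$; choosing $\eta,\eta'$ small makes this exceed $\ew{n_{k+1}}/(\log n_{k+1})^\epsilon$ for all large $k$, while the total failure probability is $\le 2\,e^{-(\log n_k)^\gamma}$. The main technical obstacle I anticipate is the uniform dynamical-to-static reduction — verifying that the Lyons estimates~(\ref{e:lyonsstrong}) and~(\ref{e:L2bounded}) applied to the perturbed trees $T^x$ lose only a multiplicative $1+o(1)$ factor uniformly in $k$ — together with the two-step conditioning needed to bypass the failure of uniform integrability of $Y_x/E[Y_x]$.
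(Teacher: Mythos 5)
Your proof is correct, and its skeleton matches the paper's exactly: reduce to i.i.d.\ counts $Y_x$ (the paper's $R_x$) over the disjoint subtrees $T^x$, $x\in A$; concentrate the binomial $N=|\{x:Y_x>0\}|$ as in Lemma~\ref{lemma:enoughguys}; then apply Lemma~\ref{lemma:array} to the conditioned normalized counts, with the arithmetic $(\log n_{k+1})^\eps = 2^\eps(\log n_k)^\eps$ and $|A|\,P(Y_x>0)\gtrsim(\log n_k)^{\alpha-1-\eps}$ (with $\alpha-1-\eps>1$) supplying the slack and the rate, just as in Lemma~\ref{lemma:secondtermbound}. The one point of departure is the treatment of the second moment. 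You observe that on a tree, ``$y$ stays connected to $x$ throughout an interval of length $\ell$'' is a conjunction of per-edge events, hence literally static percolation with the perturbed spherically symmetric weights $\tilde p_e = p_e e^{-(1-p_e)\ell}$, so that~\eqref{e:Lyonsagain} and~\eqref{e:L2bounded} apply directly to the perturbed tree with uniform constants. The paper instead compares the dynamical count $R_x$ with the static-at-time-$0$ count $R'_x$ via $R_x\le R'_x$ together with $E[R_x]\asymp E[R'_x]$ and $P(R_x>0)\asymp P(R'_x>0)$ (Lemma~\ref{lemma:2ndmomentbound}). Your shortcut is cleaner but tree-specific, since on a general graph ``connected throughout $[0,\ell]$'' is not a conjunction of per-edge events; the paper's monotone comparison is slightly more robust. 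Both routes yield the uniform conditional second-moment bound needed for Lemma~\ref{lemma:array}, and the rest of the two arguments coincide.
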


\medskip\noindent
We now first complete the proof of Theorem~\ref{th:sstrees1}(i) by noting
that it is easy to see that Proposition \ref{pr:key} implies that
for large $k$, we have that on $G_k$
$$
P(G_{k+1}^c|{\mathcal F}_{n_k})\le n^2_{k+1} e^{-(\log n_k)^\gamma}.
$$
Since $\gamma>1$, we have
$$
\sum_k
 n^2_{k+1} e^{-(\log n_k)^\gamma}<\infty\,.
$$
For any finite $k'$, we have $P\Bigl(\bigcap_{k\le k'} G_k\Bigr)>0$.
Hence, the above implies that $P(G_k\; \forall k)>0$,
and since $\bigcap_k G_k\subseteq \{\FCC(\rho,t,\infty)\;\forall t\in[0,1]\}$, this implies
$$
P(\FCC(\rho,t,\infty) \,\,\forall t\in [0,1])> 0.
$$
This yields the required result by Lemma \ref{lemma:staysclose}.
\QED

Before starting the proof of Proposition \ref{pr:key},
we first need the following lemma.

\begin{lemma} \label{lemma:Lyonsnlogn}
Consider a spherically symmetric tree with spherically symmetric edge probabilities,
and assume that for some $\beta> 1$,
$\ew n\ge \Omega(1)\, n\,(\log n)^\beta$
holds for every $n$. If $x\in T_{n_k}$, then
$$
P(\FCbetterTree(x,T_{n_{k+1}}))\, \ew {n_k}\ge
\Omega(1)\, (\log n_k)^{\beta-1}.
$$
\end{lemma}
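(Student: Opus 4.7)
The plan is to apply Lyons' refined bound \eqref{e:lyonsstrong} to the subtree rooted at $x$, and then bound the resulting harmonic-type sum using the hypothesis $\ew n\ge \Omega(1)\, n\,(\log n)^\beta$ together with the exponential gap $n_{k+1}=n_k^2$.

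First, I would observe that since $T$ is spherically symmetric, so is the subtree $T^x$ rooted at $x\in T_{n_k}$, with level $j$ of $T^x$ corresponding to level $n_k+j$ of $T$ and with edge probability at level $j$ of $T^x$ equal to $p_{n_k+j}$. If $w^x_j$ denotes the expected number of vertices at level $j$ of $T^x$ connected to $x$, an immediate computation (using $\ew{n_k+j}=\prod_{i=1}^{n_k+j}p_i\cdot |T_{n_k+j}|$ and spherical symmetry) gives $w^x_j = \ew{n_k+j}/\ew{n_k}$. The event $\{\FCbetterTree(x,T_{n_{k+1}})\}$ is the event that $x$ is connected in $T^x$ to level $n_{k+1}-n_k$ of $T^x$.

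Second, I would apply \eqref{e:lyonsstrong} to $T^x$; the $\asymp$ constants there depend only on $\inf_n p_n$ and $\sup_n p_n$, and by our standing assumption these remain uniform when we pass to any subtree of the form $T^x$. This yields
$$
P(\FCbetterTree(x,T_{n_{k+1}}))\asymp
\left(\sum_{j=1}^{n_{k+1}-n_k}\frac{1}{w^x_j}\right)^{-1}
=\left(\ew{n_k}\sum_{m=n_k+1}^{n_{k+1}}\frac{1}{\ew m}\right)^{-1}.
$$
Multiplying both sides by $\ew{n_k}$, it suffices to show
$$
\sum_{m=n_k+1}^{n_{k+1}}\frac{1}{\ew m}\le O(1)\,(\log n_k)^{-(\beta-1)}.
$$

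Third, I would substitute the hypothesis $\ew m\ge \Omega(1)\,m(\log m)^\beta$ and compare the sum with an integral:
$$
\sum_{m=n_k+1}^{n_{k+1}}\frac{1}{\ew m}
\le O(1)\int_{n_k}^{n_{k+1}}\frac{dm}{m(\log m)^\beta}
=\frac{O(1)}{\beta-1}\left(\frac{1}{(\log n_k)^{\beta-1}}-\frac{1}{(\log n_{k+1})^{\beta-1}}\right).
$$
Since $n_{k+1}=n_k^2$ gives $\log n_{k+1}=2\log n_k$, the bracketed expression equals $(1-2^{-(\beta-1)})(\log n_k)^{-(\beta-1)}$, which (as $\beta>1$) is $\Omega(1)\,(\log n_k)^{-(\beta-1)}$. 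This completes the argument.

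The only delicate point is the uniformity of the constants in \eqref{e:lyonsstrong} across the family of subtrees $\{T^x : x\in T_{n_k},\,k\ge 0\}$; as noted above, this is guaranteed by the standing assumption that $\inf_n p_n>0$ and $\sup_n p_n<1$, so there is no real obstacle and the remaining calculations are the routine integral-comparison estimates above.
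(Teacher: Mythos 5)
Your proposal is correct and takes essentially the same approach as the paper: both apply the Lyons estimate \eqref{e:lyonsstrong} to the subtree $T^x$ after noting that the expected number of level-$\ell$ vertices connected to $x$ within $T^x$ is $\ew\ell/\ew{n_k}$, and then bound the resulting sum by the integral $\int_{n_k}^{n_{k+1}}\frac{dm}{m(\log m)^\beta}$, using $\log n_{k+1}=2\log n_k$ to obtain $(\log n_k)^{1-\beta}$ up to constants. Your remark on the uniformity of the implied constants across subtrees is a nice explicit touch that the paper leaves implicit.
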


\proof
It is easy to see that for $x\in T_{n_k}$,
the expected number of vertices in $T_\ell$ connected to $x$ within $T^x$
is $\ew {\ell}/\ew {n_k}$ for $\ell\ge n_k$. Hence by (\ref{e:lyonsstrong}),
if $x\in T_{n_k}$, we have that
$$
P(\FCbetterTree(x,T_{n_{k+1}}))\asymp
\Bigl(\sum_{\ell=n_k+1}^{n_{k+1}} \frac{\ew{n_k}}{\ew{\ell}}\Bigr)^{-1}
\ge \Omega(1)\,  \frac{1}{\ew {n_k}}\,
\Bigl(\sum_{\ell=n_k+1}^{n_{k+1}}
\frac{1}{\ell(\log \ell)^{\beta}} \Bigr)^{-1}.
$$
Next
$$
\sum_{\ell=n_k+1}^{n_{k+1}} \frac{1}{\ell(\log \ell)^{\beta}}
\asymp  \int_{n_k}^{n_{k+1}} \frac{1}{x(\log x)^{\beta}} \,dx
=
\int_{\log n_k}^{\log(n_{k+1})} \frac{1}{u^{\beta}}\, du
\asymp  (\log n_k)^{1-\beta},
$$
since $n_k=2^{2^k}$,
completing the proof.
\QED

\proofof{Proposition~\ref{pr:key}}

For $x \in T_{n_k}$, let $R_x$ be the number of vertices at level $n_{k+1}$
which are connected to $x$ within $T^x$ throughout $[0,1/n^2_{k+1}]$ and
let $R_k$ denote a random variable which has distribution $R_x$.
The expected number of vertices at level $n_{k+1}$ which are connected to $x$
within $T^x$ at time 0 is $\ew{n_{k+1}}/\ew {n_k}$.
Since a given path of length $n_{k+1}-n_k$ is updated during $[0,1/n^2_{k+1}]$
with probability $o(1)$, we have
\begin{equation} \label{eq:needlaterfirst}
E[R_k] = \frac{\ew{n_{k+1}}}{\ew {n_k}} \,\bigl(1- o(1)\bigr),
\end{equation}
as $k\to\infty$.

\begin{lemma} \label{lemma:2ndmomentbound}
Let $\tilde{R}_k$ have distribution $R_k$ conditioned on $\{R_k >0\}$.
Then
$$
E[(\tilde{R}_k)^2]\le O(1)E[(\tilde{R}_k)]^2.
$$
\end{lemma}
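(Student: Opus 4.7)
My plan is to recast $R_x$ as a cluster size in an auxiliary \emph{static} percolation on $T^x$ with perturbed, but still spherically symmetric, edge probabilities, and then invoke the second moment bound~(\ref{e:L2bounded}) directly.

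To carry this out, I will first compute, starting from the stationary distribution, the probability that a given edge $e$ at depth $i$ stays open throughout $[0,1/n_{k+1}^2]$. The Markovian dynamics (with rates $p_i$ for closed$\to$open and $1-p_i$ for open$\to$closed) give
$$ q_i := p_i\exp\bigl(-(1-p_i)/n_{k+1}^2\bigr). $$
Because distinct edges evolve independently, the events ``$e$ is open throughout $[0,1/n_{k+1}^2]$'' are mutually independent across edges. Therefore $R_x$ has the same law as the number of level-$n_{k+1}$ vertices of $T^x$ connected to $x$ under static Bernoulli percolation on $T^x$ where an edge at depth $i$ is retained with probability $q_i$.

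The main step to verify is that~(\ref{e:L2bounded}) may be applied to this auxiliary percolation with constants independent of $k$ and $x$. The auxiliary percolation is spherically symmetric on the spherically symmetric subtree $T^x$, and the standing assumption $0<\inf_n p_n\le\sup_n p_n<1$, together with $1/n_{k+1}^2\le 1$, yields the uniform two-sided bound
$$ 0 \;<\; e^{-1}\,\inf_n p_n \;\le\; q_i \;\le\; \sup_n p_n \;<\; 1 $$
for every $i$, $k$ and $x$. Hence~(\ref{e:L2bounded}), which holds whenever the edge probabilities are bounded away from $0$ and $1$, applies with a constant that does not depend on $k$ or $x$, giving $E[\tilde R_k^2]\le 2\,E[\tilde R_k]^2$, as required.

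The only subtlety is precisely this uniformity of constants in the application of~(\ref{e:L2bounded}) to the family $\{q_i\}$; once that is in hand, the lemma is a direct translation of the static $L^2$ bound on $T^x$ to the short-time dynamical setting on the same subtree, with no further estimates needed.
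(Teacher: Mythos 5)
Your proof is correct, and it takes a genuinely different route from the paper's. The paper compares the dynamical quantity $R_x$ with the static quantity $R'_x := |\{y\in T_{n_{k+1}}:\FCCTree(x,0,y)\}|$: since $R'_x\ge R_x$, it bounds $E[R_x^2]\le E[(R'_x)^2]$, then applies~\eqref{e:L2bounded} to $R'_x$, and finally transfers the resulting bound to the conditioned variable $\tilde R_k$ via the two ratio estimates $E[R_k]\asymp E[R'_x]$ and $P(R_k>0)\asymp P(R'_x>0)$ (the first of which is~\eqref{eq:needlaterfirst} and the second of which is asserted ``by a similar argument''). You bypass both ratio estimates by noting that on a tree the root-to-$T_{n_{k+1}}$ path is unique, so being connected throughout $[0,1/n_{k+1}^2]$ means every edge on the path stays open throughout; hence $R_x$ is itself exactly a static cluster count for the spherically symmetric Bernoulli percolation on $T^x$ with edge probabilities $q_i=p_i e^{-(1-p_i)/n_{k+1}^2}$, and~\eqref{e:L2bounded} can be invoked once, directly, for this auxiliary tree. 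This is cleaner and gives the explicit constant $2$. One small remark: the constant in~\eqref{e:L2bounded} is the absolute constant $2$ coming from the upper bound in~\eqref{e:Lyonsagain}, which holds for any spherically symmetric tree with spherically symmetric edge probabilities and does not depend on $\inf p$ or $\sup p$; so the uniformity caveat you flag at the end, while true and harmless, is not actually needed.
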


\proof
Fix some $x\in T_{n_k}$, and let $R'_x:=\bigl|\{y\in T_{n_{k+1}}:\FCCTree(x,0,y)\}\bigr|$.
We have argued above that $E[R_k]\ge \bigl(1-o(1)\bigr)E[R'_x]$.
This implies $E[R_k]\asymp E[R'_x]$.
A similar argument gives $P(R'_x>0)\asymp P(R_k>0)$.
Since $R'_x\ge R_x$, this together with~\eqref{e:L2bounded} easily leads to the statement;
the details are left to the reader.
\QED

\begin{lemma} \label{lemma:enoughguys}
There exists $\gamma >1$ so that for all $\delta>0$, we have
that for large $k$, if $A\subseteq T_{n_k}$ with
$|A|\ge\ew {n_k}/(\log n_k)^\epsilon$, then
$$
P\Bigl(\bigl|\{x\in A: R_x >0\}\bigr| \le
(1-\delta)\,\frac{P(R_k>0)\,\ew {n_k}}{(\log n_k)^\epsilon}\Bigr)
\le e^{-(\log n_k)^\gamma}.
$$
\end{lemma}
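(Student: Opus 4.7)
The plan is to exploit independence coming from the tree structure and then apply a standard Chernoff bound. Since the subtrees $T^x$ for distinct $x\in T_{n_k}$ are vertex-disjoint (except at their roots $x$), the dynamical percolation configurations on these subtrees are independent. By spherical symmetry each $R_x$ has the same distribution as $R_k$, so the indicators $\{\One_{\{R_x>0\}}:x\in A\}$ are i.i.d.\ Bernoulli random variables with success probability $p_k:=P(R_k>0)$. Write $S:=|\{x\in A:R_x>0\}|$; by Chernoff, for any $\delta>0$,
$$
P\bigl(S\le (1-\delta)\,|A|\,p_k\bigr)\le \exp\bigl(-c(\delta)\,|A|\,p_k\bigr),
$$
for a constant $c(\delta)>0$.

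Next I would lower-bound $|A|\,p_k$. By hypothesis $|A|\ge \ew{n_k}/(\log n_k)^\epsilon$, so it remains to estimate $p_k$ from below. Set $R'_x:=|\{y\in T_{n_{k+1}}:\FCCTree(x,0,y)\}|$, so that $P(R'_x>0)=P(\FCbetterTree(x,T_{n_{k+1}}))$. As observed in the proof of Lemma~\ref{lemma:2ndmomentbound}, $P(R_k>0)\asymp P(R'_x>0)$ (the $o(1)$ factor from~\eqref{eq:needlaterfirst} costs nothing). Applying Lemma~\ref{lemma:Lyonsnlogn} with $\beta=\alpha$ (which is valid since by hypothesis $\ew n\ge \Omega(1)\,n(\log n)^\alpha$ for large $n$) gives
$$
p_k\,\ew{n_k}\ge \Omega(1)\,(\log n_k)^{\alpha-1}.
$$
Combining these,
$$
|A|\,p_k \ge \frac{\ew{n_k}}{(\log n_k)^\epsilon}\cdot \frac{\Omega(1)\,(\log n_k)^{\alpha-1}}{\ew{n_k}}=\Omega(1)\,(\log n_k)^{\alpha-1-\epsilon}.
$$

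Since $\epsilon$ was chosen at the start of the proof of Theorem~\ref{th:sstrees1}(i) so that $2+2\epsilon<\alpha$, we have $\alpha-1-\epsilon>1+\epsilon$. Picking any $\gamma\in(1,\alpha-1-\epsilon)$ (for instance $\gamma=1+\epsilon/2$) and absorbing the constant $c(\delta)\Omega(1)$ into the exponent for large $k$ then yields
$$
P\bigl(S\le (1-\delta)\,|A|\,p_k\bigr)\le e^{-(\log n_k)^\gamma}
$$
for all sufficiently large $k$, as required (noting $(1-\delta)|A|p_k\ge (1-\delta)p_k\ew{n_k}/(\log n_k)^\epsilon$ gives the quantity on the left-hand side of the Lemma). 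The main conceptual point is that the independence of the subtrees lets us bypass any correlated-version of Chernoff; the only real computation is lining up the lower bound from Lemma~\ref{lemma:Lyonsnlogn} with the choice of $\epsilon$ to produce a super-logarithmic exponent. No step looks like a serious obstacle once the independence is observed and the correct $\beta$ is plugged in.
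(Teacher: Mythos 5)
Your proof is correct and follows essentially the same route as the paper: observe that the indicators $\One_{\{R_x>0\}}$ are i.i.d.\ Bernoulli by the independence of the subtrees $T^x$, apply a standard Chernoff/binomial large deviation bound, and then lower-bound $E[X]=|A|\,P(R_k>0)$ via Lemma~\ref{lemma:Lyonsnlogn} together with the constraint $2+2\epsilon<\alpha$. The only difference is that you spell out the computation $|A|\,p_k\ge\Omega(1)(\log n_k)^{\alpha-1-\epsilon}$ and the choice of $\gamma$ more explicitly than the paper does.
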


\proof
The random variable $X:=\bigl|\{x\in A: R_x >0\}\bigr|$  has a binomial distribution 
with parameters $|A|$ and $P(R_k >0)$. The probability in the statement of the lemma 
is at most
$$
P\bigl(X\le E[X](1-\delta)\bigr).
$$
By standard large deviations (see for example Corollary A.1.14 in \cite{AS}), the latter is a most
$2\,e^{-c_\delta E(X)}$.  Lemma \ref{lemma:Lyonsnlogn} 
and our choice of $\eps$ imply that
$E[X]\ge \Omega(1)\, (\log n_k)^{1+\epsilon}$, proving the claim.
\QED

\begin{lemma} \label{lemma:secondtermbound}
There exists $\delta >0$ and $\gamma >1$ such that for all large $k$, if
$$
M\ge (1-\delta)\,\frac{P(R_k>0)\,\ew {n_k}}{(\log n_k)^\epsilon}
$$
and $Y_1,\ldots,Y_{M}$ are i.i.d.\
with the distribution of $\tilde{R}_k$ (defined
in Lemma \ref{lemma:2ndmomentbound}), then
\begin{equation}
\label{e.2tb}
P\left(\sum_{i=1}^{M} Y_i \le
\frac{\ew{n_{k+1}}}{(\log n_{k+1})^\epsilon}\right)\le e^{-(\log n_k)^\gamma}.
\end{equation}
\end{lemma}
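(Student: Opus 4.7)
The plan is a direct Chernoff-type concentration estimate that reduces the claim to Lemma~\ref{lemma:array}. Normalize by setting $X_{k,j}:=Y_j/E[\tilde R_k]$, so that the $X_{k,j}$ are i.i.d.\ mean-$1$ random variables. By Lemma~\ref{lemma:2ndmomentbound} they satisfy $E[X_{k,j}^2]\le O(1)$ uniformly in $k$, and an $L^2$-bound gives uniform integrability of the entire family $\{X_{k,j}\}$, as required by Lemma~\ref{lemma:array}.

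Next I would compute the expectation of the sum. Using $E[\tilde R_k]=E[R_k]/P(R_k>0)$ together with \eqref{eq:needlaterfirst},
$$
M\,E[\tilde R_k]
= \bigl(1-o(1)\bigr)\,\frac{M\,w_{n_{k+1}}}{w_{n_k}\,P(R_k>0)}
\ge (1-\delta)\bigl(1-o(1)\bigr)\,\frac{w_{n_{k+1}}}{(\log n_k)^\epsilon},
$$
where the inequality uses the hypothesis on $M$. Since $\log n_{k+1}=2\log n_k$, the threshold in~\eqref{e.2tb} equals $2^{-\epsilon}\,w_{n_{k+1}}/(\log n_k)^\epsilon$. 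Because $2^{-\epsilon}<1$, one can choose $\delta=\delta(\epsilon)>0$ small enough that, for all large $k$, the threshold is at most $(1-\eta)\,M\,E[\tilde R_k]$ for some fixed $\eta>0$. After dividing by $E[\tilde R_k]$, the event in~\eqref{e.2tb} becomes $\sum_{j=1}^{M} X_{k,j}\le (1-\eta)M$, and Lemma~\ref{lemma:array} then bounds its probability by $e^{-cM}$ for some $c=c(\eta)>0$.

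To convert $e^{-cM}$ into the required $e^{-(\log n_k)^\gamma}$, I need a lower bound on $M$. Under our standing hypothesis, $w_n/\bigl[n(\log n)^{2+2\epsilon}\bigr]\to\infty$ since $2+2\epsilon<\alpha$, so Lemma~\ref{lemma:Lyonsnlogn} applied with $\beta=2+2\epsilon$ gives $P\bigl(\FCbetterTree(x,T_{n_{k+1}})\bigr)\,w_{n_k}\ge\Omega(1)(\log n_k)^{1+2\epsilon}$. By the comparison $P(R_k>0)\asymp P\bigl(\FCbetterTree(x,T_{n_{k+1}})\bigr)$ noted inside the proof of Lemma~\ref{lemma:2ndmomentbound}, the same estimate applies to $P(R_k>0)\,w_{n_k}$. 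Hence $M\ge\Omega(1)(\log n_k)^{1+\epsilon}$, and $e^{-cM}\le e^{-(\log n_k)^\gamma}$ for any $\gamma\in(1,1+\epsilon)$ once $k$ is large enough, giving the bound stated in the lemma.

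The only subtle point is in the second step: one has to verify that the $2^{-\epsilon}$ gap between the threshold and $M\,E[\tilde R_k]/(1-\delta)(1-o(1))$ survives after absorbing the slack, and to pick $\delta$ in terms of $\epsilon$ alone, before any $k$-dependent quantities enter. This is the main obstacle, but it is resolved purely by the elementary inequality $2^{-\epsilon}<1$; everything else is bookkeeping built on the second-moment control from Lemma~\ref{lemma:2ndmomentbound} and the connectivity bound from Lemma~\ref{lemma:Lyonsnlogn}.
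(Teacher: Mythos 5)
Your proposal is correct and follows essentially the same route as the paper: choose $\delta$ via $2^{-\epsilon}/(1-\delta)<1$, normalize to apply Lemma~\ref{lemma:array} after verifying uniform integrability from the second-moment bound of Lemma~\ref{lemma:2ndmomentbound}, and then use Lemma~\ref{lemma:Lyonsnlogn} to get $M\ge\Omega(1)(\log n_k)^{1+\epsilon}$. You simply spell out the $o(1)$ bookkeeping and the source of the $M$-lower bound more explicitly than the paper does.
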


\proof
Choose $\delta$ so that
\begin{equation} \label{eq:delta}
\frac{1}{2^\epsilon(1-\delta)} <1.
\end{equation}
Our lower bound on $M$ and an 
easy calculation shows that the left hand side of~\eqref{e.2tb} is bounded by
$$
P\left(\frac{1}{M}\sum_{i=1}^{M}\frac{Y_i}{E[Y_i]}
\le
S_k\right),\qquad\text{where }S_k:=
\frac{\ew{n_{k+1}} (\log n_k)^\epsilon}
{\ew {n_k} (\log n_{k+1})^\epsilon(1-\delta)E[R_k]}\,.
$$
The expression~\eqref{eq:needlaterfirst}
for $E[R_k]$ implies that $\lim_{k\to\infty} S_k=1/(2^\epsilon(1-\delta))$.
Since a family of random variables which have a uniform bound on
their second moments is uniformly integrable, Lemmas \ref{lemma:array}
and \ref{lemma:2ndmomentbound} and (\ref{eq:delta})
imply that
$$
 P\Bigl(\frac{1}{M}\sum_{i=1}^{M}\frac{Y_i}{E[Y_i]}
\le
S_k\Bigr)\le e^{-cM},
$$
 for some $c>0$ and all large $k$.
Lemma \ref{lemma:Lyonsnlogn} insures that
$M\ge \Omega(1)(\log n_k)^{1+\epsilon}$,
completing the proof.
\QED

One finally notes that Proposition \ref{pr:key}
is a consequence of Lemmas \ref{lemma:enoughguys} and
\ref{lemma:secondtermbound}. \QED

\medskip\noindent
{\bf Remark:} In the proof of Theorem~\ref{th:sstrees1}(ii),
we separate things into the two cases $\alpha<2$ and $\alpha=2$
but we emphasize that this is done for presentational purposes only.

\medskip\noindent
We now move to
\proofof{Theorem~\ref{th:sstrees1}(ii); case $\alpha<2$}
Let $A:=\{\FCC(\rho,t,\infty) \,\,\forall t\in [0,1]\}$.
By Lemma \ref{lemma:staysclose}, it suffices to show that $P(A)=0$
and for this it suffices to show that for every $M>0$, there is an event
$G=G(M)$ so that $P(G) \ge 1- 2/M$ and $P(A|G)=0$. We now fix such an $M$.
The $O(1)$ terms appearing below may (and will) depend on $M$ (but they
will of course be independent of the level of the tree under discussion).

For the moment, we consider our percolation at a fixed time.
It is well known that $\{W_n/\ew n\}$ (recall $W_n$ is the number
of vertices on the $n$'th level connected to the root)
is a nonnegative martingale and hence converges a.s.\ to a random variable
denoted
$W_\infty$ with $E[W_\infty]\le 1$.
Doob's inequality tells us that
\begin{equation} \label{eq:2overM}
P\left(\frac{W_n}{\ew n}\ge M \mbox{ for some } n\ge 0\right)
\le \frac{1}{M}\,.
\end{equation}

Returning to our dynamical model, we let $W_{n,t}$ be the analogue of
$W_n$ above but at time $t$. We now define
$$
G:= \left\{ \mu\bigl\{t\in [0,1]:
{W_{n,t}}/{\ew n}\ge M \mbox{ for some } n\ge 0\bigr\}<
\frac{1}{2}\right\},
$$
where $\mu$ denotes Lebesgue measure.
Fubini's theorem, Markov's inequality and (\ref{eq:2overM})
easily yield that $P(G)\ge 1- 2/M$.  We will
show that $P(A|G)=0$, completing the proof.

Set $m_n:= \lfloor M\,\ew n\rfloor$.
For all $B\subseteq T_n$ with $|B|\le m_n$,
let $\tilde{B}$ be a subset of $T_n$ containing
$B$ such that
$|\tilde{B}|= m_n$,
and such that $\tilde B$ is a deterministic function of $B$.
Of course, this can only be done for
$n\ge N=N(M):=\min\bigl\{k:|T_k|\ge m_k \bigr\}$.
If $|B|>m_n$, we take $\tilde B$ to be the leftmost $m_n$
elements of $B$.

Let $S_{n,t}$ be the set of vertices in $T_n$ that are
connected to $\rho$ by open paths at time $t$.
Then $W_{n,t}=|S_{n,t}|$.
For each $n\ge N=N(M)$, define the random variable
$$
X_n:=
\mu\bigl\{t\in[0,1]:
W_{n,t}\le m_n,\,
\FCCnotTree(\tilde{S}_{n,t},t,\infty)\bigr\}\,.
$$

The key step is to carry out a conditional second moment
argument on $X_n$ conditioned on the evolution of the first
$n$ levels 
on that part of the probability space where
something ``good'' happens. The following proposition
 will be the consequence of this conditional second moment
argument.

\begin{proposition} \label{pr:keysecond}
There exists $c=c(M)>0$ such that for all $n$ sufficiently large
$$
P\left(X_n > 0|{\mathcal F}_n\right) \ge c \, \mbox{ on } G
$$
where ${\mathcal F}_n$ is the $\sigma$-algebra generated by the evolution
of the first $n$ levels of the tree.  
\end{proposition}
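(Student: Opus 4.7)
The plan is to carry out a conditional second-moment (Paley--Zygmund) argument for $X_n$ given $\mathcal{F}_n$. For the lower bound on the first moment, observe that on $G$ the set $H_G := \{t \in [0,1] : W_{n,t}/\ew n < M\}$ has $\mu(H_G) > 1/2$ by the construction of $G$, and for every $t \in H_G$ the set $\tilde S_{n,t}$ is an $\mathcal{F}_n$-measurable subset of $T_n$ of size exactly $m_n$. Spherical symmetry together with the independence of the subtrees $\{T^x\}_{x \in T_n}$ gives
\[
P\bigl(\FCCnotTree(\tilde S_{n,t},t,\infty)\bigm|\mathcal{F}_n\bigr) = (1-p_n^*)^{m_n} =: q_n
\]
for every such $t$, where $p_n^* := P(\FCbetterTree(x,\infty)\text{ in }T^x)$ satisfies $p_n^* \asymp 1/(n\log n)$ by Lyons' formula~\eqref{e:lyonsstrong} applied to $T^x$. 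Integrating yields $E[X_n \mid \mathcal{F}_n] \ge q_n/2$ on $G$.

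For the second moment, introduce the independent processes $Z^x_t := \One_{\{\FCCTree(x,t,\infty)\text{ in }T^x\}}$ for $x \in T_n$, so that $U_t := \{\FCCnotTree(\tilde S_{n,t},t,\infty)\} = \bigcap_{x \in \tilde S_{n,t}} \{Z^x_t = 0\}$. Independence of the $Z^x$'s across $x$, together with spherical symmetry, yields the factorization
\[
P(U_t \cap U_s \mid \mathcal{F}_n) = (1 - 2p_n^* + r_n(|t-s|))^{I(t,s)}\,(1-p_n^*)^{2(m_n - I(t,s))},
\]
where $I(t,s) := |\tilde S_{n,t} \cap \tilde S_{n,s}|$ and $r_n(h) := P(Z^x_0 = Z^x_h = 1)$. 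The scalar dynamical correlation $r_n(h) - (p_n^*)^2$ is controlled by the Fourier identity $\mathrm{Cov}(\One_{A_0},\One_{A_h}) = \sum_{S \ne \emptyset} e^{-|S|h}\widehat{\One_A}(S)^2$ for dynamical percolation. Splitting the double integral $\iint_{H_G^2} P(U_t \cap U_s \mid \mathcal{F}_n)\,dt\,ds$ by the separation $h = |t-s|$ and bounding each regime should produce $E[X_n^2 \mid \mathcal{F}_n] \le C(M) q_n^2$; Paley--Zygmund then yields $P(X_n > 0 \mid \mathcal{F}_n) \ge 1/(4C(M)) > 0$ on $G$.

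The main obstacle is that $q_n \to 0$ in the regime $1 < \alpha < 2$ because $m_n p_n^* \asymp M(\log n)^{\alpha-1} \to \infty$, so the crude spectral bound $r_n(h) - (p_n^*)^2 \le e^{-h} p_n^*(1-p_n^*)$ used with $I(t,s) \equiv m_n$ is far too lossy: the short-time contribution would then dominate. The delicate step is to combine (a) a refined Fourier estimate for the event $A = \{x \to \infty\text{ in }T^x\}$ exploiting that the level-$1$ mass $\sum_{|S|=1}\widehat{\One_A}(S)^2 = p(1-p)\sum_e \mathrm{Infl}_e(A)^2$ is of order $p_n^{*2}$ rather than $p_n^*$ (because the bulk of the variance lives at high Fourier levels, which decay faster under the dynamics), yielding a much tighter decay of $r_n(h)-(p_n^*)^2$ for large $h$; and (b) the fluctuation of $\tilde S_{n,t}$ in $t$ driven by updates of edges at levels $\le n$, which depresses $I(t,s)$ below $m_n$ at intermediate separations. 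Together these inputs absorb the correlation contribution into $O(q_n^2)$ and close the Paley--Zygmund estimate.
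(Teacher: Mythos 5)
Your overall plan---a conditional Paley--Zygmund argument for $X_n$ given $\mathcal F_n$, using the product structure over the independent subtrees $T^x$, $x\in\tilde S_{n,t}$---is exactly the paper's. The first-moment lower bound $E[X_n\mid\mathcal F_n]\ge \tfrac12 (1-p_n^*)^{m_n}$ on $G$ is correct, and your factorization of $P(U_t\cap U_s\mid\mathcal F_n)$ in terms of $I(t,s)=|\tilde S_{n,t}\cap\tilde S_{n,s}|$ and the scalar correlation $r_n(h)$ is precisely the formula the paper uses. You also correctly diagnose the obstacle: since $m_n p_n^*\asymp M(\log n)^{\alpha-1}\to\infty$, the crude one-edge spectral bound $r_n(h)-(p_n^*)^2\lesssim e^{-h}p_n^*$ is useless, because $(1+e^{-h}p_n^*)^{m_n}$ blows up.

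However, at this point the proposal has a genuine gap, and the repair you sketch points in the wrong direction. Neither of the two ``delicate'' inputs you name is what is actually needed (or what the paper uses). In particular, your item (b) --- using the fluctuation of $\tilde S_{n,t}$ to push $I(t,s)$ below $m_n$ --- is unnecessary: since $\tilde q_n(h)\ge\tilde q_n^2$, one always has
\[
\tilde q_n(h)^{I(t,s)}\,\tilde q_n^{2(m_n-I(t,s))}\le \tilde q_n(h)^{m_n},
\]
so the \emph{worst-case} intersection $I\equiv m_n$ suffices; no gain from (b) is ever extracted. What is needed is \emph{only} a sharper decorrelation estimate for a single subtree, namely $q_n(t)\le O(1)\,q_n^2/t$, i.e.\ $r_n(h)-(p_n^*)^2\le O(1)(p_n^*)^2/h$. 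You gesture at obtaining this via a Fourier analysis of the event $\{x\to\infty\}$, but you do not carry it out, and it is not clear that the route through level-$1$ Fourier mass would close: the claim that the level-$1$ weight is of order $(p_n^*)^2$ is plausible but the argument for controlling the \emph{intermediate} Fourier levels is absent, and that is where the whole difficulty lives. The paper instead proves the decorrelation bound directly and without Fourier analysis (its Lemma~\ref{lemma:correlation}): condition on the leftmost open ray $\pi$ from $x$ to $\infty$ at time $0$, decompose according to how many edges of $\pi$ are still open and used at time $t$, use that off $\pi$ the conditional dynamical law is stochastically dominated by the unconditional one, and observe that each retained edge of $\pi$ contributes a factor $\le 1-\epsilon_0 t$ (uniformly, since $\sup_n p_n<1$). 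Summing the resulting geometric series gives $q_n(t)\le q_n\cdot\sup_j q_{n+j}\cdot(\epsilon_0 t)^{-1}$, and $q_n\asymp 1/(n\log n)$ finishes. Once this estimate is in hand, the second moment is handled by a simple split of $\int_0^1\tilde q_n(t)^{m_n}\,dt$ at $t=1/n$: for $t\le 1/n$ use the trivial bound $\tilde q_n(t)\le\tilde q_n$, giving a vanishing contribution $\le n^{-1}\tilde q_n^{-m_n}=n^{\sigma-1}$ with $\sigma<1$ (because $m_n q_n\asymp(\log n)^{\alpha-1}=o(\log n)$ when $\alpha<2$), and for $t\ge 1/n$ the refined bound makes the integrand $O(1)$. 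So the structure of your argument is right, but the specific estimate that makes it work is missing, and your proposed Fourier/fluctuation route to it is both unsubstantiated and overcomplicated compared to the paper's direct conditioning argument.
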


\medskip\noindent
We postpone the proof of the proposition, and continue with the proof
of the theorem.
It is clear that $\{X_n > 0\}\subseteq A^c$ and hence
$$
P(A^c|{\mathcal F}_n) \ge c \, \mbox{ on } G.
$$
Letting $n\to\infty$, Levy's 0-1 Law implies
that the left hand side approaches $\One_{A^c}$ a.s. As $c>0$,
we conclude that $P(A|G)=0$, as desired. \QED

Before starting the proof of Proposition \ref{pr:keysecond}, we need
a lemma. Let
$$
q_n:=P\left(\FCCTree(x,0,\infty)\right) \mbox{ and }
q_n(t):=P\left(\{\FCCTree(x,t,\infty)\}\cap\{\FCCTree(x,0,\infty)\}\right),
$$
where $x\in T_n$.

It is easy to check that the proof of Lemma \ref{lemma:Lyonsnlogn} shows that
\begin{equation} \label{e:q_n}
q_n \asymp  \frac{1}{n\log n}\,.
\end{equation}

\begin{lemma} \label{lemma:correlation}
$$
q_n(t)\le \frac{O(1)q_n^2}{t}\,.
$$
\end{lemma}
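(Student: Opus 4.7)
The plan is to derive a recursive upper bound for $q_n(t)$ by conditioning on the children of $x$ and then iterate it; the damping factor $(1-\beta(t))$ accumulated over levels will produce the desired $1/t$ gain. Let $y_1,\dots,y_d$ (with $d=d_{n+1}$) denote the children of $x$, and for $s\in\{0,t\}$ and $1\le i\le d$ let $E_{s,i}$ be the event that the edge $xy_i$ is open at time $s$ and $\FCCTree(y_i,s,\infty)$ holds in $T^{y_i}$. Since $\{\FCCTree(x,0,\infty)\}\cap\{\FCCTree(x,t,\infty)\}=\bigcup_{i,j}(E_{0,i}\cap E_{t,j})$, a union bound splits $q_n(t)$ into a diagonal contribution ($i=j$) in which the edge $xy_i$ must be open at both times -- probability $\sigma_{n+1}(t):=p_{n+1}^2+p_{n+1}(1-p_{n+1})e^{-t}$ -- independent of $q_{n+1}(t)$ for the doubly percolating subtree, and an off-diagonal contribution ($i\ne j$) in which $E_{0,i}$ and $E_{t,j}$ involve disjoint edges and disjoint subtrees, hence are independent and each contribute $(p_{n+1}q_{n+1})^2$.

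Writing $a_k:=d_kp_k=w_k/w_{k-1}$, a short computation gives $d_{n+1}\sigma_{n+1}(t)=a_{n+1}\bigl[p_{n+1}+(1-p_{n+1})e^{-t}\bigr]\le a_{n+1}(1-\beta(t))$, where $\beta(t):=(1-\sup_kp_k)(1-e^{-t})$ is strictly positive on $(0,1]$ and satisfies $\beta(t)\asymp t$ there, by the standing assumption. Combined with the bound $a_{n+1}q_{n+1}=O(q_n)$ (a consequence of the Lyons-type formula $q_n\asymp\bigl(w_n\sum_{k>n}w_k^{-1}\bigr)^{-1}$ together with the asymptotics $q_k\asymp 1/(k\log k)$ and $w_k\asymp k(\log k)^\alpha$), one obtains the recursive inequality
\[
q_n(t)\le a_{n+1}(1-\beta(t))\,q_{n+1}(t)+O(q_n^2).
\]
Iterating to infinity, using the telescoping identity $\prod_{j=n+1}^k a_j=w_k/w_n$ and the fact that $\prod_{j=n+1}^N a_j(1-\beta(t))^{N-n}q_N(t)$ decays exponentially in $N$, yields
\[
q_n(t)\le O(1)\sum_{k\ge n}\frac{w_k}{w_n}(1-\beta(t))^{k-n}q_k^2.
\]

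Substituting the asymptotics gives $(w_k/w_n)\,q_k^2\asymp(\log k)^{\alpha-2}/\bigl(kn(\log n)^\alpha\bigr)$; since $\alpha<2$ we have $(\log k)^{\alpha-2}\le(\log n)^{\alpha-2}$ for $k\ge n$, and the remaining geometric tail $\sum_{j\ge 0}(1-\beta(t))^j/(n+j)$ is bounded by $(n\beta(t))^{-1}$. This produces $q_n(t)\le O(1)/\bigl(n^2(\log n)^2\beta(t)\bigr)=O(q_n^2)/t$ for $t\in(0,1]$, as claimed. The main subtlety is arranging the recursion so that it cleanly separates the correlated diagonal contribution (which yields the contraction factor $1-\beta(t)$ responsible for the crucial $1/t$ gain) from the decoupled off-diagonal contribution (which supplies the $O(q_n^2)$ inhomogeneous term fixing the final order); once this is in place, the remaining analysis is a routine geometric-series estimate.
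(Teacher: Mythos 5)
Your proof is correct, but it takes a genuinely different route from the paper's. The paper conditions on the leftmost open ray $\pi$ from $x$ at time $0$ and decomposes $q_n(t)$ according to the number $j$ of edges that a time-$t$ ray shares with $\pi$; the decay in $t$ then comes from $\prod_{i=1}^j\bigl(p_{n+i}(1-e^{-t})+e^{-t}\bigr)\le(1-\eps_0 t)^j$, and the whole argument takes place ``at a single level.'' Your argument instead sets up the one-step recursion $q_n(t)\le a_{n+1}(1-\beta(t))\,q_{n+1}(t)+O(q_n^2)$ by splitting over the children of $x$ (diagonal child coincidence $i=j$ versus decoupled $i\neq j$), and then iterates to infinity. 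The contraction factor $1-\beta(t)$ that ultimately gives the $1/t$ is of the same origin in both proofs (the probability that an edge is open at both times is only a factor $1-\Omega(t)$ times its single-time probability), but the bookkeeping is organized differently: the paper sums over the common prefix length directly and needs a slightly delicate domination step to justify $P(K_j\mid\FCCTree(x,0,\infty))\le q_{n+j}\cdot(\dots)$ after conditioning on the leftmost path, whereas your recursion avoids any conditioning subtlety at the cost of having to control the remainder term $\frac{w_N}{w_n}(1-\beta)^{N-n}q_N(t)$ and to verify $a_{n+1}q_{n+1}=O(q_n)$ uniformly (which does follow from \eqref{e:lyonsstrong}). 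Two small notational points: with the paper's convention $d_j$ counts children of a vertex in $T_j$, so for $x\in T_n$ you should take $d=d_n$ and $a_k=d_{k-1}p_k=w_k/w_{k-1}$; and the inequality $(\log k)^{\alpha-2}\le(\log n)^{\alpha-2}$ for $k\ge n$ holds for $\alpha\le 2$ (with equality at $\alpha=2$), which is the range actually needed, not just $\alpha<2$. Neither affects the validity of the argument.
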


\proof
Fix $x\in T_n$ and $t\in(0,1]$.
Suppose that $\FCCTree(x,0,\infty)$,
and condition on the left most open path
$\pi=(\pi_0,\pi_1,\dots)$ from
$x$ to $\infty$ inside $T_x$ at time $0$.
Let $K_j$ be the event that at time $t$ there is
an open path from $x$ to $\infty$ that shares exactly
$j$ edges with $\pi$.
Because in the complement of $\pi$ the conditional law
of the dynamical percolation is dominated by
the unconditional law, we clearly have
\begin{multline*}
P\left(K_j\:|\:\FCCTree(x,0,\infty)\right)\le 
P\left(\FCCTree(\pi_j,t,\infty)\right)\,
P\left(\FCCTree(x,t,\pi_j)\:|\:\FCCTree(x,0,\infty)\right)
\\
=
q_{n+j}
\prod_{i=1}^{j} \Bigl(p_{n+i}(1-e^{-t})+e^{-t}\Bigr).
\end{multline*}
Since $P(K_\infty)=0$, we get
\begin{multline*}
q_n(t)
 =q_n\,
P\left(\FCCTree(x,t,\infty)\:|\:\FCCTree(x,0,\infty)\right)
\le
q_n\,
\sum_{j=0}^\infty
P\left(K_j\:|\:\FCCTree(x,0,\infty)\right)
\\
\le
q_n\,
\sum_{j=0}^\infty
q_{n+j}
\prod_{i=1}^{j} \Bigl(p_{n+i}(1-e^{-t})+e^{-t}\Bigr).
\end{multline*}
As the $p_i$'s are bounded away from $1$,
there exists a constant $\epsilon_0 \in(0,1)$
 such that each factor in the product
on the right is at most
$1-\epsilon_0 t$ (regardless of the choice of $t$ in $(0,1]$). 
Hence, the above gives
\begin{multline*}
q_n(t)\le q_n\,\sum_{j=0}^\infty
q_{n+j} \, (1-\eps_0\,t)^j
\le q_n\,\sup\{q_{n+j}:j=0,1,\dots\}\,\sum_{j=0}^\infty(1-\eps_0\,t)^j
\\
=
q_n\,\sup\{q_{n+j}:j=0,1,\dots\}\,(\eps_0\,t)^{-1}.
\end{multline*}
Now an appeal to~\eqref{e:q_n}
completes the proof.
\QED

Let
\begin{equation} \label{e:tilde1}
\tilde{q}_n:=1-q_n.
\end{equation}
Next, letting $\tilde{q}_n(t)$ be the probability that a given vertex
at level $n$ does not percolate to $\infty$
both at time 0 and at time $t$, we easily have that
\begin{equation} \label{e:tilde2}
\tilde{q}_n(t)=1-2q_n+q_n(t).
\end{equation}

We use~\eqref{e:tilde1} and~\eqref{e:tilde2},
to obtain
$$
\frac{\tilde{q}_n(t)}{\tilde{q}^2_n}
=
\frac{1-2\,q_n+q_n(t)}{(1-q_n)^2}
=1+\frac{q_n(t)-q_n^2}{(1-q_n)^2}
\le
1+\frac{q_n(t)}{(1-q_n)^2}
\,.
$$
By Lemma~\ref{lemma:correlation} and~\eqref{e:q_n} we therefore get 
\begin{equation}
\label{e:boundonbtilde}
\frac{\tilde{q}_n(t)}{\tilde{q}^2_n}
\le 1+ O\bigl(q^2_n/t\bigr).
\end{equation}

We can now carry out the

\proofof{Proposition~\ref{pr:keysecond}}

We apply a conditional second moment argument.
First, it is immediate that for any $n\ge N$
$$
E[X_n|{\mathcal F}_n]\ge \frac{1}{2}
(\tilde{q}_n)^{m_n}\,\One_G\,.
$$
In order to estimate $ E[X_n^2|{\mathcal F}_n]$, we note that
$$
\PB{\FCCnotTree(\tilde S_{n,s},s,\infty),
\FCCnotTree(\tilde S_{n,t},t,\infty)\md \mathcal F_n}
=
{\tilde q}_n(|t-s|)^{|\tilde S_{n,s}\cap\tilde S_{n,t}|}\,
{\tilde q}_n^{|\tilde S_{n,s}\setminus\tilde S_{n,t}|+|\tilde S_{n,t}\setminus\tilde S_{n,s}|}.
$$
Since ${\tilde q}_n(t)\ge {\tilde q}_n^2$, this gives for every $n\ge N$ a.s.
\begin{equation} \label{e:2ndmombound}
E[X_n^2|{\mathcal F}_n]
\le \int_0^1\int_0^1 
\tilde{q}_n(|t-s|)^{m_n}\,dt\,ds
\le 2
\int_0^1 \tilde q_n(t)^{m_n}\,dt\,.
\end{equation}
 Using the trivial bound
${\tilde{q}_n(t)}\le {\tilde{q}_n}$
for $t\le 1/n$ and the bound~\eqref{e:boundonbtilde} for larger
values of $t$, we get that on $G$
\begin{equation} \label{e:ratiobound}
\frac{E[X_n^2|{\mathcal F}_n]}{E[X_n|{\mathcal F}_n]^2}\le 8
\int_0^{{\frac{1}{n}}}
\left(\frac{1}{\tilde{q}_n}\right)^{m_n}dt
+
8\int_{\frac{1}{n}}^1
\Bigl(1+ O\bigl(q^2_n/t\bigr)\Bigr)
^{m_n}dt\,.
\end{equation}
Using (\ref{e:q_n}) and~\eqref{e:tilde1},
if $\alpha <2$, then the first integrand is easily checked to be at most
$O(1)\,n^\sigma$ for some $\sigma< 1$ (and in fact for any $\sigma< 1$
with the $O(1)$ term then of course depending on $\sigma$)
and hence the first integral
goes to 0. If $\alpha \le 2$, then, using (\ref{e:q_n}),
it is easy to check that the
second integrand, when $t\ge \frac{1}{n}$, is at most $O(1)$.
So the ratio of the conditional second moment and
the conditional first moment squared on $G$ is bounded above and so
the (conditional) Cauchy Schwartz inequality yields the claim of the
proposition.
\QED

\proofof{Theorem~\ref{th:sstrees1}(ii); case $\alpha =2$}
For any integers $n\ge L\ge 1$, and any $v\in T_L$, let
$W^{v}_n$ be the number of vertices at level $n$
connected to $\rho$ which are in $T^v$.

\begin{lemma}
\label{l:good}
Letting
$E_{L,\epsilon}:=\{W^{v}_n \le \epsilon\ew n \,\,\forall n\ge L,
\,\,\forall v\in T_L\}$, we have that for all $\epsilon >0$,
$$
\lim_{L\to\infty} P(E_{L,\epsilon})=1.
$$
\end{lemma}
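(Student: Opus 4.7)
The plan is to reduce to a single-subtree martingale estimate, combine a conditional second moment bound with Doob's maximal inequality, and then sum over $v\in T_L$. For each $v\in T_L$, write $W^v_n=\One_{\{\FCbetter(\rho,v)\}}\,N^v_n$, where $N^v_n$ is the number of vertices at level $n$ in $T^v$ that are connected to $v$ within $T^v$. Since $\{\FCbetter(\rho,v)\}$ depends only on edges above level $L$ while $(N^v_n)_{n\ge L}$ depends only on edges in $T^v$, the two are independent. By spherical symmetry, $E[N^v_n]=\ew n/\ew L$, so $M^v_n:=N^v_n\,\ew L/\ew n$ is a nonnegative martingale in $n\ge L$ starting at $1$.

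To bound the second moment of $M^v_n$ uniformly in $n$, I would apply~\eqref{e:L2bounded} to the subtree $T^v$ to get $E[(N^v_n)^2]\le 2\,E[N^v_n]^2/P(N^v_n>0)$, and apply~\eqref{e:lyonsstrong} to $T^v$ to get
$$P(N^v_n>0)\asymp\Bigl(\sum_{k=L+1}^n \ew L/\ew k\Bigr)^{-1}.$$
In the $\alpha=2$ regime $\ew k\asymp k(\log k)^2$, an integral comparison shows that this sum is $O(L\log L)$ uniformly in $n\ge L$. Hence $E[(M^v_n)^2]\le 2/P(N^v_n>0)=O(L\log L)$ uniformly, and Doob's $L^2$ maximal inequality gives $E[\sup_{n\ge L}(M^v_n)^2]=O(L\log L)$.

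Since $\{N^v_n>\eps \ew n\}=\{M^v_n>\eps \ew L\}$, Chebyshev produces
$$P\Bigl(\sup_{n\ge L} N^v_n/\ew n>\eps\Bigr)=O\Bigl(\frac{1}{\eps^2\,L(\log L)^3}\Bigr).$$
Using the independence noted above together with the identity $\ew L=|T_L|\,P(\FCbetter(\rho,v))$, the expected number of ``bad'' vertices $v\in T_L$ (those for which some $n\ge L$ violates $W^v_n\le \eps \ew n$) is at most
$$\ew L\cdot O\Bigl(\frac{1}{\eps^2\,L(\log L)^3}\Bigr)=O\Bigl(\frac{1}{\eps^2\log L}\Bigr)\xrightarrow[L\to\infty]{}0,$$
and Markov's inequality concludes that $P(E_{L,\eps})\to 1$.

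The main obstacle is that $M^v_n$ has $L^2$-norm growing with $L$ like $\sqrt{L\log L}$, so naive Doob/Chebyshev applied to a single subtree is not small; the argument succeeds only after the rescaling by $\ew L^2\asymp L^2(\log L)^4$ in Chebyshev and the union bound prefactor $\ew L$ from summing $P(\FCbetter(\rho,v))$ over $v\in T_L$, which leaves a margin of $1/\log L$. This is precisely the slack afforded by the critical logarithmic correction in $\ew n\asymp n(\log n)^2$, and the calculation must track polylogarithmic factors carefully for this margin to survive.
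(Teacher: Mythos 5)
Your proposal is correct and follows essentially the same route as the paper: the normalized level-count process is a martingale, Doob's $L^2$ maximal inequality together with the second-moment bound from Lyons' estimate (via~\eqref{e:Lyonsagain} and~\eqref{e:lyonsstrong}) controls the supremum, and a union bound over $v\in T_L$ finishes. The only cosmetic difference is that you factor $W^v_n=\One_{\{\FCbetter(\rho,v)\}}N^v_n$ and exploit independence, whereas the paper works with $W^v_n$ directly and lower-bounds $P(W^v_n>0)\ge P(\FCbetter(\rho,v))\,q_L$; the two computations are equivalent (and the paper's version works for any $\alpha>1$, though $\alpha=2$ is all that is needed here).
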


\proof
Fix $\epsilon >0$ and $v\in T_L$. 
Since $W^v_n/\Eb{W^v_n}$ is a martingale with respect to $n$
(for $n\ge L$), we have
\begin{equation}\label{e.L1}
\begin{aligned}
P(W^{v}_n \ge \epsilon\ew n \mbox{ for some } n\ge L)
&
=
P\bigl(W^{v}_n \ge \epsilon\, E[W^{v}_n]\,|T_L| \mbox{ for some } n\ge L\bigr)
\\ &
\le \frac{1}{\epsilon^2 |T_L|^2}\,
\sup_{n\ge L}\frac{E[(W^{v}_n)^2]}{E[W^{v}_n]^2}\,,
\end{aligned}
\end{equation}
by Doobs $L_2$ martingale inequality.
The estimate (\ref{e:Lyonsagain}) gives for $n\ge L$
\begin{equation}
\label{e.L2}
\frac{E[(W^{v}_n)^2]}{E[W^{v}_n]^2}
\le \frac{O(1)}{P(W^v_n>0)}
\le \frac{O(1)}{P(\FCbetter(\rho,v))\,q_L}
= \frac{O(|T_L|)}{\ew L\,q_L}\,.
\end{equation}
We sum~\eqref{e.L1} over $v\in T_L$ and use~\eqref{e.L2}
as well as
(\ref{e:q_n}), to obtain
$$
P(E^c_{L,\epsilon})\le \frac{O(1)\,L\,\log L}{\ew L\, \epsilon^2}
$$
which approaches 0 as $L\to\infty$, since $\alpha>1$.
\QED

Next, using $\ew n \asymp n(\log n)^2$ and (\ref{e:q_n}),
choose an $\epsilon >0$ sufficiently small so that
$({1}/{\tilde{q}_n})^{\epsilon \ew n-1}\le n$
for all $n$ sufficiently large,
and set $m_n:=\lfloor \epsilon\,\ew n\rfloor$.
Let $E_{L,\epsilon,t}$ denote the event that $E_{L,\epsilon}$
occurs at time $t$, let
${\mathcal G}_{L,\epsilon}:=\{t\in[0,1]:E_{L,\epsilon,t}\}$
and let
$\tmG_{L,\eps}$ be the (closed) support of the restriction
of the Lebesgue measure $\mu$ to $\mathcal G_{L,\eps}$.
Finally, let
$G_{L,\epsilon}:=\{\tmG_{L,\eps}\ne \emptyset\}=
\{\mu(\mathcal G_{L,\eps})\ne 0\}$.
Lemma~\ref{l:good} easily implies that $\lim_{L\to\infty}P(G_{L,\epsilon})=1$.

For any vertex $v$, let
$$
{\mathcal T}^v:=
\{t\in[0,1]:\FCCnot(\rho,t,v)\}
\cup\{t\in[0,1]:\FCCnotTree(v,t,\infty)\}\,,
$$
which is the set of times in $[0,1]$ in which $\rho$ does not
connect to $\infty$ through $v$.
Note that ${\mathcal T}^v$ is open.

\begin{proposition} \label{pr:dense}
With the above choice of $\eps>0$, for all $L$ and $v\in T_L$,
$$
P({\mathcal T}^v \cap \tmG_{L,\epsilon}
\mbox{ is dense in } \tmG_{L,\epsilon})=1.
$$
\end{proposition}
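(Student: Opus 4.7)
The plan is to adapt the conditional second moment argument of Proposition~\ref{pr:keysecond} (the $\alpha<2$ case) by running it inside the subtree $T^v$ on a fixed subinterval $J\subset[0,1]$. The preliminary reduction: since $\mathcal T^v$ is open and $\tmG_{L,\epsilon}$ is the closed support of the restriction of Lebesgue measure $\mu$ to $\mathcal G_{L,\epsilon}$, density of $\mathcal T^v\cap\tmG_{L,\epsilon}$ in $\tmG_{L,\epsilon}$ is equivalent to the implication
$\mu(\mathcal G_{L,\epsilon}\cap U)>0\Rightarrow\mu(\mathcal T^v\cap\mathcal G_{L,\epsilon}\cap U)>0$
holding for every open $U\subset[0,1]$ (using that $U\cap\mathcal T^v$ is open, so $U\cap\mathcal T^v\cap\tmG_{L,\epsilon}\ne\emptyset$ iff $\mu(U\cap\mathcal T^v\cap\mathcal G_{L,\epsilon})>0$). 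By countable additivity over a basis of rational intervals, it suffices to fix such an interval $J$ and prove the implication a.s.

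Fix $J$ and $v\in T_L$, and define
$$
X_n := \mu\!\left\{t\in J\cap\mathcal G_{L,\epsilon}:\FCCnotTree(\tilde S^v_{n,t},t,\infty)\text{ in }T^v\right\},
$$
where $S^v_{n,t}$ is the set of level-$n$ vertices of $T^v$ connected to $v$ in $T^v$ at time $t$ and $\tilde S^v_{n,t}$ is its deterministic completion inside $T_n\cap T^v$ to size $m_n$. Clearly $\{X_n>0\}\subseteq\{\mu(\mathcal T^v\cap\mathcal G_{L,\epsilon}\cap J)>0\}$. Computing the conditional first moment on $\mathcal F_n$: since both $\{t\in\mathcal G_{L,\epsilon}\}$ and $\{\FCCnotTree(\tilde S^v_{n,t},t,\infty)\text{ in }T^v\}$ are decreasing events in the dynamics below level $n$, FKG (conditional on $\mathcal F_n$) together with the standard computation gives
$$
E[X_n\mid\mathcal F_n] \ge \tilde q_n^{m_n}\, E[\mu(\mathcal G_{L,\epsilon}\cap J)\mid\mathcal F_n].
$$
For the second moment, bounding the indicator of $\{t\in\mathcal G_{L,\epsilon}\}$ by $1$ reduces the computation to the estimate (\ref{e:2ndmombound}) of the $\alpha<2$ case, yielding
$$
\frac{E[X_n^2\mid\mathcal F_n]}{E[X_n\mid\mathcal F_n]^2} \le \frac{O(1)}{E[\mu(\mathcal G_{L,\epsilon}\cap J)\mid\mathcal F_n]^2},
$$
with the $O(1)$ coming from the ratio bound (\ref{e:ratiobound}) via (\ref{e:boundonbtilde}).

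The main obstacle is that the ratio bound (\ref{e:ratiobound}) is borderline at $\alpha=2$: the first integral $\int_0^{1/n}(1/\tilde q_n)^{m_n}dt$, easily $o(1)$ when $\alpha<2$, would diverge now for a generic choice of $m_n$. This is precisely what the tailored choice of $\epsilon$ handles: with $(1/\tilde q_n)^{m_n-1}\le n$, the first integrand is at most $O(n)$ on an interval of length $1/n$, giving $O(1)$; the second integral is $O(1)$ via (\ref{e:boundonbtilde}) as before. To conclude: on the event $\{\mu(\mathcal G_{L,\epsilon}\cap J)>\delta\}$, martingale convergence $E[\mu(\mathcal G_{L,\epsilon}\cap J)\mid\mathcal F_n]\to\mu(\mathcal G_{L,\epsilon}\cap J)$ yields ratio $\le O(1/\delta^2)$ for large $n$, whence $P(X_n>0\mid\mathcal F_n)\ge\Omega(\delta^2)$ by Cauchy--Schwarz. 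Applying Levy's 0-1 law to $E:=\{\mu(\mathcal T^v\cap\mathcal G_{L,\epsilon}\cap J)>0\}\supseteq\{X_n>0\}$ exactly as in the final step of the proof of Theorem~\ref{th:sstrees1}(ii) case $\alpha<2$ gives $\{\mu(\mathcal G_{L,\epsilon}\cap J)>\delta\}\subseteq E$ a.s.; taking a countable union over $\delta=1/k$ completes the proof.
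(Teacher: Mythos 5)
Your proof is correct and follows essentially the same route as the paper: the same reduction to rational intervals via the support of Lebesgue measure, the same conditional second moment on $X_n$ over $I\cap\mathcal G_{L,\epsilon}$ with FKG for the first moment and the bound by $X_n^*$ (dropping the $\mathcal G_{L,\epsilon}$ restriction) for the second, the same use of the tailored $\epsilon$ to make the $\int_0^{1/n}$ integral $O(1)$ at the borderline $\alpha=2$, and the same Levy 0-1 law closing step. The only cosmetic difference is that the paper formulates the key estimate directly as $P(\mu(\mathcal T^v\cap I\cap\mathcal G_{L,\eps})>0\mid\mathcal F_n)\ge c\,Y_n^2$ and lets $Y_n\to Y$ handle the conclusion in one stroke, whereas you pass through the threshold events $\{Y>\delta\}$ and take a countable union over $\delta=1/k$; this is equivalent.
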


\noindent
Given this proposition, the Baire category theorem
(or an easy induction) yields that
$$
P\Bigl(
\tmG_{L,\epsilon}\cap\bigcap_{v\in T_L} \mathcal T^v
\mbox{ is dense in }\tmG_{L,\epsilon}\Bigr)=1
$$
and hence
$$
P(A^c|G_{L,\epsilon})=1.
$$
Since $\lim_{L\to\infty}P(G_{L,\epsilon})=1$, we are done. \QED

\proofof{Proposition~\ref{pr:dense}}
Fix $L$ and $v\in T_L$.
By countable additivity, it suffices to show that
for all open intervals $I$ with rational endpoints,
\begin{equation} \label{e:Ienough}
P\bigl(\mu(I\cap \mG_{L,\eps})=0\text{ or }
\mu(\mathcal T^v\cap I\cap \mG_{L,\eps})>0\bigr)=1\,.
\end{equation}
Set $Y:=\mu(I\cap \mG_{L,\eps})$
and $Y_n:=\Eb{Y\md \mathcal F_n}$.
We claim that for some constant $c>0$, depending
only on $I$ and $L$, and for all sufficiently large $n$, we have
\begin{equation}
\label{e.claim}
P\bigl(
\mu(\mathcal T^v\cap I\cap\mG_{L,\eps})>0
\mid\mathcal F_n\bigr)\ge c\,Y_n^2\,.
\end{equation}
Clearly, $Y_n\to Y$ a.s.,
while Levy's 0-1 Law implies that the left hand side
converges a.s.\ to $\One_{\{\mu(\mathcal T^v\cap I\cap \mG_{L,\eps})>0\}}$.
Therefore,~\eqref{e.claim} implies~\eqref{e:Ienough} and the proposition.

For all $B\subseteq T_n\cap T^v$ with $|B|\le m_n$,
let $\tilde{B}$ be a subset of $T_n\cap T^v$ containing $B$
such that $|\tilde{B}|= m_n$ and $\tilde B$ is a deterministic function
of $B$.
(This only works for large enough $n$ so that
$|T^v\cap T_n|\ge m_n$.)
If $|B|>m_n$, let $\tilde B$ be the subset of $B$
consisting of the leftmost $m_n$ elements of $B$.
Let $S_{n,t}^v$ denote the set of vertices in $T^v\cap T_n$ that
are connected to $\rho$ at time $t$, and 
define
$$
X_n:=\mu\Bigl(\bigl\{t\in I\cap \mG_{L,\eps}:
\FCCnotTree(\tilde S^v_{n,t},t,\infty)\bigr\}\Bigr)\,.
$$
Then
$$
E[X_n\mid{\mathcal F}_n]=
\int_I P(t \in \mG_{L,\epsilon}\mid {\mathcal F}_n)\,
P(\FCCnotTree(\tilde{S}^{v}_{n,t},t,\infty)
\mid t \in \mG_{L,\epsilon},\,{\mathcal F}_n)\,dt.
$$
Since our process is positively associated even when conditioned on
${\mathcal F}_n$,
the second factor in the integrand is at least as large as
$
P(\FCCnotTree(\tilde{S}^{v}_{n,t},t,\infty\mid {\mathcal F}_n))
=
(\tilde q_n)^{m_n},
$
and hence
the above gives
$$
E[X_n\mid{\mathcal F}_n]\ge Y_n\,(\tilde{q}_n)^{m_n}.
$$

For the conditional second moment, let
$$
X_n^*:= \mu\Bigl(\bigl\{t\in I:
\FCCnotTree(\tilde S^v_{n,t},t,\infty)\bigr\}\Bigr)\,.
$$
Then $X_n^*\ge X_n$. Arguing as in the case $\alpha<2$, we get
$$
\Eb{X_n^2\md \mathcal F_n}\le\Eb{(X_n^*)^2\md \mathcal F_n}
\le 2\, \mu(I)\,\int_0^{\mu(I)} \tilde q_n(t)^{m_n}\,dt\,.
$$
We take $n$ larger than $1/\mu(I)$, and use the bounds
$\tilde q_n(t)\le \tilde q_n$ and~\eqref{e:boundonbtilde},
to get
$$
\frac{E[X_n^2|{\mathcal F}_n]}{E[X_n|{\mathcal F}_n]^2}
\le
\frac{2\,\mu(I)}{Y_n^2}\int_0^{1/n} (\tilde q_n)^{-m_n}\,dt
+
\frac{2\,\mu(I)}{Y_n^2}\int_{1/n}^{\mu(I)}
\bigl(1+ O( {q^2_n}/{t})\bigr)
^{m_n}\,dt\,.
$$
By our choice of $\eps$ and $m_n$, the left integral is bounded.
As we have seen in the previous case, the integrand of the right integral is
also bounded. 
The (conditional) Cauchy Schwartz inequality therefore gives~\eqref{e.claim}.
\QED

\section{Proof of Theorem \ref{th:dyntrans}} \label{sec:5}

We first recall the definitions of pivotality and influence.

\medskip\noindent
{\bf Definition:}
An edge $e$ is {\sl pivotal} for an event $A$ if changing the status of
$e$ changes whether or not $A$ occurs. The {\sl influence} of $e$ on the
event $A$, $I_A(e)$, is the probability that $e$ is pivotal for $A$.

\medskip
Next we need the definition of a ``flip time''.

\medskip\noindent
{\bf Definition:}
Given a graph and a vertex $x$, a time $t$ is called a {\it flip time for $x$}
if $x$ percolates at time $t$ but there is an edge $e$
which is pivotal for the event $\{\FCbetter(x,\infty)\}$
at time $t$ and which changes its status at time $t$.
(Note in this case, there is a $\delta >0$ such
that either (1) $x$ does not percolate during $(t-\delta,t)$
or (2) $x$ does not percolate during $(t,t+\delta)$.)

\begin{lemma}\label{l:prodfin}
In a spherically symmetric tree with spherically symmetric edge probabilities
$$
\Eb{W_n}\,\Pb{W_n=1}\le \Pb{W_n>0}^2.
$$
\end{lemma}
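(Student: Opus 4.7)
The plan is to prove the inequality by induction on $n$. The base case $n=0$ is immediate, since $W_0\equiv 1$ makes both sides equal to $1$.

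For the inductive step, I decompose the tree at $\rho$. Let $u_1,\dots,u_{d_0}$ be the children of $\rho$ (where $d_0$ is the common number of children of a level-$0$ vertex), let $X_i=\One_{\{\rho u_i\text{ open}\}}$, and let $W_n^i$ count the level-$n$ vertices of $T^{u_i}$ that are connected to $u_i$ inside $T^{u_i}$. The $X_i$ are i.i.d.\ Bernoulli$(p_1)$, the $W_n^i$ are i.i.d., the two families are independent, and $W_n=\sum_{i=1}^{d_0} X_i W_n^i$. With the shorthand $\hat w:=\Eb{W_n^1}$, $\hat q:=\Pb{W_n^1>0}$, $\hat r:=\Pb{W_n^1=1}$, and $y:=p_1\hat q\in[0,1]$, independence yields
$$\ew n=d_0\,p_1\,\hat w,\qquad \Pb{W_n>0}=1-(1-y)^{d_0},\qquad \Pb{W_n=1}=d_0\,p_1\,\hat r\,(1-y)^{d_0-1}.$$

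The induction hypothesis applied to the (spherically symmetric) subtree $T^{u_1}$ reads $\hat w\,\hat r\le\hat q^2$, whence
$$\ew n\cdot\Pb{W_n=1}=d_0^2\,p_1^2\,\hat w\hat r\,(1-y)^{d_0-1}\le d_0^2\,y^2\,(1-y)^{d_0-1}.$$
It therefore suffices to verify the one-variable inequality $d^2 y^2(1-y)^{d-1}\le\bigl(1-(1-y)^d\bigr)^2$ for $y\in[0,1]$ and $d\ge 1$. For this I will use the geometric sum identity $1-(1-y)^d=y\sum_{k=0}^{d-1}(1-y)^k$ together with AM--GM applied to the $d$ terms $(1-y)^0,(1-y)^1,\dots,(1-y)^{d-1}$, whose geometric mean is $(1-y)^{(d-1)/2}$; this gives $\sum_{k=0}^{d-1}(1-y)^k\ge d\,(1-y)^{(d-1)/2}$, and squaring proves the claim.

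The only substantive move is spotting the correct decomposition at the root: once one sees that $\Pb{W_n=1}$ factors as a single ``good'' child times $(1-y)^{d_0-1}$ while $\Pb{W_n>0}^2$ produces $(1-(1-y)^{d_0})^2$, the geometric-series/AM--GM finish is essentially forced. AM--GM becomes an equality at $d=1$, which matches the fact that the lemma is tight along an unbranched path, so no slack is available at this final step.
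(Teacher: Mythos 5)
Your proof is correct, and it follows a genuinely different route from the paper's. The paper argues directly at level $n$: for each $v\in T_n$, it writes $\Pb{W_n=1\text{ via }v}=\Pb{\ev L_v}\Pb{\ev R_v}/\Pb{v\in Q}$ using the independence of what lies to the left and to the right of the path from $\rho$ to $v$ (where $\ev L_v$, $\ev R_v$ are the events that $v$ is the leftmost, resp.\ rightmost, connected vertex at level $n$), applies AM--GM to the two terms $\Pb{\ev L_v}$ and $\Pb{\ev R_v}$, and sums over $v$, using spherical symmetry to identify $\Pb{Q=\{v\}}=\Pb{W_n=1}/|T_n|$ and $\Pb{v\in Q}=\ew n/|T_n|$. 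You instead induct on $n$ by decomposing at the root, push the inequality down to the $d_0$ subtrees, and reduce to the one-variable inequality $d^2y^2(1-y)^{d-1}\le\bigl(1-(1-y)^d\bigr)^2$, which you settle by AM--GM on the $d$ geometric terms. Amusingly, both proofs bottom out in AM--GM and both are tight exactly along an unbranched path. The paper's argument is shorter and needs no induction, but leans on the slightly less elementary left/right independence structure along a root-to-leaf path; yours isolates the combinatorial heart of the lemma as a clean scalar inequality at a single branching, which makes the mechanism (and the tightness at $d=1$) very transparent. One small presentational point: when you invoke the inductive hypothesis for $T^{u_1}$, it is worth saying explicitly that the hypothesis is being applied at level $n-1$ of that subtree (equivalently, that the induction is over all spherically symmetric trees simultaneously), since $W_n^i$ counts vertices one level shallower relative to $u_i$ than $W_n$ does relative to $\rho$.
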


As we will later see in Lemma~\ref{l:piv}, the reverse inequality holds up to a multiplicative
constant under some reasonable assumptions.

\proof
Let $Q$ be the set of vertices in $T_n$ that are connected to $\rho$.
For $v\in T_n$, let
$\ev L_v$ denote the event that $v\in Q$
and $v$ is the leftmost vertex in $Q$.
Likewise, let $\ev R_v$ denote the event that $v\in Q$
and $v$ is the rightmost vertex in $Q$.
Then
$$
\Pb{Q=\{v\}}= \Pb{\ev L_v,\,\ev R_v}=
\frac{ \Pb{\ev L_v}\,\Pb{\ev R_v}}{\Pb{v\in Q}}\,,
$$
by the independence of what happens to the right of the
path from $\rho$ to $v$ and what happens to the left of this path.
Applying the arithmetic-geometric means inequality, we find
$$
\Pb{Q=\{v\}}^{1/2}\,\Pb{v\in Q}^{1/2}\le\frac 12\, \Pb{\ev L_v}+\frac 12 \,\Pb{\ev R_v}\,.
$$
When $Q\ne\emptyset$, there is precisely one vertex $v$ satisfying $\ev L_v$
and precisely one vertex satisfying $\ev R_v$. Hence, by summing the above over all
$v\in T_n$, we get
$$
\sum_{v\in T_n}
\Pb{Q=\{v\}}^{1/2}\,\Pb{v\in Q}^{1/2}\le\Pb{W_n>0}\,.
$$
Now note that for every $v\in T_n$ we have $\Pb{Q=\{v\}}=\Pb{W_n=1}/|T_n|$
and $\Pb{v\in Q}=\Eb{W_n}/|T_n|$.  The Lemma follows.
\QED

\proofof{Theorem~\ref{th:dyntrans}.(i)}
We will estimate from above the expected number of pivotal edges for the
event $\{\FCbetter(\rho,T_n)\}$ in a static configuration.
For each $m\in\{1,\dots,n\}$, let $v_m$ be the leftmost vertex in $T_m$,
and let $u(m,n)$ be the expected number of edges between $T_{m-1}$ and $T_m$
that are pivotal for $\{\FCbetter(\rho,T_n)\}$.
Also let $a(m,n)$ be the probability that $v_m$ is connected to $T_n$ within its subtree; that is,
$a(m,n)=\Pb{\FCbetterTree(v_m,T_n)}$.
To  estimate $u(m,n)$, we consider a different tree $T'$ which is
identical to $T$ until level $m$, but each vertex at level $m$ in $T'$
has only one child at level ${m+1}$, and the edge probability for the edges
between levels $m$ and $m+1$ in $T'$ is $a(m,n)=a(m,n;T)$
(and the $m+1$ level is the last level of $T'$).
The probability that the edge $[v_{m-1},v_{m}]$ is
pivotal for $\{\FCbetter(\rho,T_n)\}$ and $\FCbetter(\rho,T_n)$
holds is the probability that
in $T'$ the child of $v_m$ is the only vertex at level $m+1$ connected to $\rho$.
By Lemma~\ref{l:prodfin}, the latter is bounded by
$$
\Pb{\FCbetter(\rho,T_n)}^2\,\bl(|T_m|\, \ew m\,a(m,n)\br)^{-1}
$$
(where the notations all relate to the tree $T$).
Therefore,
$$
p_m\,u(m,n) \le
\bl(\ew m\,a(m,n)\br)^{-1}.
$$
Observe that the expected number of vertices $v\in T_k$
satisfying $\FCbetterTree(v_m,v)$ is $\ew k/\ew m$.
Therefore~\eqref{e:lyonsstrong}
applied to the tree $T^{v_m}$ gives
$$
a(m,n)^{-1} \asymp
\ew m
\sum_{k=m+1}^n{\ew k^{-1}}.
$$
Plugging this into the above, we get
\begin{equation}
\label{e.pumn}
p_m\,u(m,n) \le O(1)
\sum_{k=m+1}^n{\ew k^{-1}}.
\end{equation}

We now move to the dynamical setting. Let $Z_n$ be the set of times in
$[0,1]$ at which $\FCbetter(\rho,T_n)$, and let $Z=\bigcap_{n>0} Z_n$ be
the percolation times of the root in $[0,1]$.
It is clear that $\partial Z=\limsup_n \partial Z_n$.
(By definition, $\limsup_n A_n := \bigcap_{n>0} \overline{\bigcup_{j>n} A_j}$.)
Note that the set $\partial Z_n$ is the set of times at which a pivotal
edge for $\{\FCbetter(\rho,T_n)\}$ switches its value. Hence,
$$
\Eb{|\partial Z_n|}= \sum_{m=1}^n 2\,p_m\,(1-p_m)\,u(m,n)
\overset{\eqref{e.pumn}}
\le
O(1) \sum_{k=1}^n{k\,\ew k^{-1}}.
$$
Our assumptions therefore imply that
$\sup_n \Eb{|\partial Z_n|}<\infty$.
Consequently, $\liminf_{n\to\infty}|\partial Z_n|<\infty$ a.s.
Since $|\partial Z|\le \liminf_{n\to\infty} |\partial Z_n|$,
this proves (i) of Theorem~\ref{th:dyntrans}.
\QED

Part (ii) of Theorem~\ref{th:dyntrans} is an easy consequence of the following theorem.

\begin{theorem}\label{th:manyswitch}
Suppose that $\sup_j d_j<\infty$, \eqref{e:perc}
and the following assumptions hold:
\begin{align}
\label{e:fm}
&
\sum_{m=1}^n \frac{1}{m}\le O(1) \sum_{m=1}^n \sum_{k=m}^\infty \frac{1}{w_k}\,,
\\
\label{e:bbsum}
&
\sum_{n=0}^\infty\Bigl(\sum_{m=n+1}^\infty \frac{\ew n}{\ew m}\Bigr)^{-2}
<\infty\,,
\\
\label{e:finsmp}
&
\sum_{k=0}^\infty
\Bigl((k+1)\,\ew k \Bigl(\sum_{j=k}^\infty \ew j^{-1}\Bigr)^2\Bigr)^{-1}
<\infty
\,.
\end{align}
Then with positive probability there are infinitely many flip times for
the event $\{\FCbetter(\rho,\infty)\}$ in the time interval $[0,1]$.
\end{theorem}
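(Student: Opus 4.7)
The plan is a second moment argument on a truncated count of flip times. Let $N$ be the number of flip times for $\{\FCbetter(\rho,\infty)\}$ in $[0,1]$, and for each $n$ let $F_n$ be the number of such flip times whose witnessing (switching) edge lies at some level $m\le n$. Almost surely at most one edge switches at any given time, so each flip time has a unique witnessing edge at a finite level; hence $F_n$ is nondecreasing in $n$ and $F_n\uparrow N$. By Paley--Zygmund combined with continuity of probability, it suffices to exhibit a constant $C$, independent of $n$, such that
\[
\Eb{F_n}\longrightarrow\infty\qquad\text{and}\qquad \Eb{F_n^{2}}\le C\,\Eb{F_n}^{2}.
\]
Indeed these yield $\Pb{F_n\ge\tfrac12\Eb{F_n}}\ge 1/(4C)$, so $\Pb{N\ge K}\ge 1/(4C)$ for every $K$ by monotonicity and Fatou, and therefore $\Pb{N=\infty}\ge 1/(4C)>0$.

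\emph{First moment.} By Campbell's formula applied to the independent Poisson switch processes of the edges,
\[
\Eb{F_n}=2\sum_{m=1}^{n}p_m(1-p_m)\sum_{e\text{ at level }m}I_{\rho}(e),
\]
where $I_{\rho}(e)$ is the influence of $e$ on $\{\FCbetter(\rho,\infty)\}$. Because $T$ is a tree, summing $p_m I_{\rho}(e)$ over edges $e$ at level $m$ counts the number of level-$m$ edges that are simultaneously on and pivotal, which equals $\One_{\{W^{\infty}_m=1\}}$, where $W^{\infty}_m$ is the number of vertices $v\in T_m$ satisfying both $\FCbetter(\rho,v)$ and $\FCbetterTree(v,\infty)$. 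Hence $\sum_{e\text{ at level }m}p_mI_\rho(e)=\Pb{W^{\infty}_m=1}$. One has $\Pb{W^{\infty}_m>0}=\Pb{\FCbetter(\rho,\infty)}$, which is bounded below by~\eqref{e:perc}, and $\Eb{W^{\infty}_m}\asymp\bigl(\sum_{j\ge m}w_j^{-1}\bigr)^{-1}$. Applying the analogue of Lemma~\ref{l:piv} (the converse of Lemma~\ref{l:prodfin}, which adapts verbatim to $W^{\infty}_m$ since the left/right independence argument still applies) gives $\Pb{W^{\infty}_m=1}\ge\Omega(1)\sum_{j\ge m}w_j^{-1}$. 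Summing over $m\le n$ and invoking~\eqref{e:fm} yields $\Eb{F_n}\ge\Omega(1)\sum_{m=1}^{n}m^{-1}\to\infty$.

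\emph{Second moment.} Write $F_n=\sum_e X_e$ where $X_e$ is the number of switches of $e$ at which $e$ is pivotal. Then
\[
\Eb{F_n^{2}}=\sum_e \Eb{X_e^{2}}+\sum_{e\ne e'}\Eb{X_e X_{e'}}.
\]
For the off-diagonal sum, Campbell's formula reduces $\Eb{X_e X_{e'}}$ to a bounded constant times $\int_0^1\!\int_0^1\Pb{e\text{ piv at }t,\,e'\text{ piv at }t'}dt\,dt'$, with the joint probability taken under the stationary two-time marginal. One estimates this joint probability by splitting into the ancestor case (one of $e,e'$ lies on the $\rho$-to-other path) and the disjoint-subtrees case; in each case, independence of disjoint subtrees and the FKG inequality factorise the event into (a) a product of single-edge pivotality probabilities and (b) a correction factor measuring the probability that the subtree rooted at the more ancestral vertex percolates at two distinct times. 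Summing this correction over all pairs is controlled by~\eqref{e:bbsum}, since the correction is essentially $\Pb{\FCbetterTree(v,\infty)}^2=(\sum_{j>n}w_n/w_j)^{-2}$ for $v$ at level $n$. The diagonal sum $\sum_e \Eb{X_e^{2}}$ admits the analogous two-time analysis for a single edge, whose summability is guaranteed by~\eqref{e:finsmp}. The hypothesis $\sup_j d_j<\infty$ is used throughout to control the combinatorial size of each level. Combining everything yields $\Eb{F_n^{2}}\le C\,\Eb{F_n}^{2}$.

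\emph{Main obstacle.} The hardest step is the off-diagonal second-moment estimate: estimating the two-time joint pivotality probability for distinct edges and then showing that, summed over all pairs of edges in levels $\le n$ and integrated over $(t,t')\in[0,1]^{2}$, it is $O(\Eb{F_n}^{2})$. The three hypotheses~\eqref{e:fm}, \eqref{e:bbsum}, \eqref{e:finsmp} are tailored precisely to make the first moment diverge and the diagonal and off-diagonal second-moment corrections summable, respectively, and the principal work is the case analysis and bookkeeping that match each correction term to the right summability condition.
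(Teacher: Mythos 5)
Your overall strategy (second moment on a count of level-truncated flip times, Paley--Zygmund, then let the truncation tend to infinity) is the same skeleton as the paper's proof, and your first-moment computation correctly identifies $\sum_{e:|e|=m}p_m I_\rho(e)$ as $\Pb{U_m=1}$ with $U_m$ the number of level-$m$ edges through which the root percolates. However, there are several genuine gaps.

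First, the paper applies the second moment method not to the count $X_n=\sum_{|e|\le n}X(e)$ (your $F_n$) but to the \emph{indicator} sum $\mathfrak X_n=\sum_{|e|\le n}\One_{\{X(e)>0\}}$. This is not a cosmetic difference: since $\mathfrak X(e)^2=\mathfrak X(e)$, the diagonal contribution to $\Eb{\mathfrak X_n^2}$ is simply $\Eb{\mathfrak X_n}$, which is negligible compared with $\Eb{\mathfrak X_n}^2$ once the first moment diverges. Your version requires a separate estimate for $\sum_e\Eb{X_e^2}$, and your claim that this is ``guaranteed by~\eqref{e:finsmp}'' via ``the analogous two-time analysis for a single edge'' is not substantiated; the off-diagonal estimate in the paper is phrased for $e\ne e'$ lying in disjoint subtrees or in nested subtrees, and neither case covers $e=e'$ directly.

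Second, you misallocate the roles of the three hypotheses. In the paper,~\eqref{e:bbsum} is used only to establish Lemma~\ref{l:bprod} (the product estimate $\prod_j(1-b_j)^{d_j-1}\asymp(\sum_{m\ge n}w_m^{-1})^2$, which in turn feeds Lemma~\ref{l:piv} and the first moment),~\eqref{e:finsmp} controls the off-diagonal pairs in \emph{disjoint subtrees} ($\mathcal E_1$), and~\eqref{e:fm} is used both for the first moment divergence \emph{and} for the off-diagonal pairs on a \emph{common ray} ($\mathcal E_2$). You do not mention~\eqref{e:fm} in the second moment at all, and you attribute~\eqref{e:bbsum} to bounding the off-diagonal term, which is not how the proof works.

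Third, and most importantly, the actual mechanism that makes the second moment bounded is not the one you describe. You say the ``correction factor'' in $\Pb{t\in A_e,\,s\in A_{e'}}$ is ``the probability that the subtree rooted at the more ancestral vertex percolates at two distinct times.'' In fact, if that ancestral vertex percolated to $\infty$ on its own, $e$ and $e'$ could not both be pivotal, so this cannot be the right picture. The real gain comes from the temporal decorrelation of the \emph{common initial path} of length $k$ shared by the paths to $e$ and $e'$: the joint probability that all $k$ common edges are open at both times $s$ and $t$ contains a factor bounded by $\exp(-\delta k|t-s|/3)\prod_{j\le k}p_j$, and integrating this over $(t,s)\in[0,1]^2$ produces the crucial $O(1/(k+1))$ factor that, combined with the counting over edge pairs and the estimate from Lemma~\ref{l:bprod}, matches~\eqref{e:finsmp}. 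Your FKG-based sketch misses this entirely.

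Finally, you treat Lemma~\ref{l:piv} (and the analogue you need) as a ``verbatim adaptation'' of the easy converse of Lemma~\ref{l:prodfin}. In the paper, Lemma~\ref{l:piv} is a genuinely different statement whose proof goes through Lemma~\ref{l:bprod}, a nontrivial product computation which is precisely where the bounded-degree assumption and~\eqref{e:bbsum} enter, and this is one of the real technical obstacles. So while your outline is in the right spirit, the hard parts of the argument are either misidentified or absent.
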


Let $b_j$ denote the probability that a vertex at level $j$ percolates
to $\infty$ (at time $0$) through its leftmost child.

\begin{lemma}\label{l:bprod}
Assume $\sup_jd_j<\infty$,~\eqref{e:perc} and~\eqref{e:bbsum}.
Then
\begin{equation}
\label{e:bprod}
\prod_{j=0}^{n-1} (1-b_j)^{d_{j}-1}\asymp
\Bigl(\sum_{m=n}^\infty \frac{1}{\ew m}\Bigr)^2,
\end{equation}
where the implied constants may depend on the tree and on the
sequence $\{p_j\}$.
\end{lemma}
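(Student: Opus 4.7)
The plan is to take logarithms and reduce the lemma to estimating the sum $\sum_{j=0}^{n-1}(d_j-1)\,b_j$. Let $\beta_j$ denote the probability that a vertex at level $j$ percolates to infinity inside its subtree, so that $b_j = p_{j+1}\beta_{j+1}$, and set $u_j := w_j\beta_j$. Applying \eqref{e:lyonsstrong} to such a subtree gives $\beta_j \asymp (w_jS_j)^{-1}$ with $S_j := \sum_{k>j}w_k^{-1}$, so $u_0 \asymp 1$ and $u_n \asymp 1/S_n$. Moreover \eqref{e:bbsum} reads $\sum_j(w_jS_j)^{-2}<\infty$, hence $\sum_j \beta_j^2 < \infty$; in particular $\beta_j\to 0$, so the Taylor expansions below are valid for all sufficiently large $j$ (the finitely many initial terms contribute only an $O(1)$ constant).

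The key step is a near-telescoping identity. Expanding $\beta_j = 1-(1-b_j)^{d_j} = d_jb_j - \binom{d_j}{2}b_j^2 + O(d_j^3b_j^3)$, multiplying by $w_j$, and using $d_jp_{j+1} = w_{j+1}/w_j$, a direct calculation yields
\[
u_j = u_{j+1}\Bigl(1 - \tfrac{d_j-1}{2}\,b_j\Bigr) + O(b_j^2\,u_{j+1}),
\]
so that $\log(u_{j+1}/u_j) = (d_j-1)b_j/2 + O(b_j^2)$. Summing from $j=0$ to $n-1$ and using $b_j \le \beta_{j+1}$ together with $\sum \beta_j^2 < \infty$ to bound the error, we obtain
\[
\sum_{j=0}^{n-1}(d_j-1)\,b_j = 2\log(u_n/u_0) + O(1) = -2\log S_n + O(1).
\]

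The conclusion follows by one more Taylor expansion. Because $\sup_j d_j < \infty$, the expansion $\log(1-b_j) = -b_j + O(b_j^2)$ together with the summability of $b_j^2$ gives
\[
\log\prod_{j=0}^{n-1}(1-b_j)^{d_j-1} = -\sum_{j=0}^{n-1}(d_j-1)\,b_j + O(1) = 2\log S_n + O(1),
\]
so that the product is $\asymp S_n^2$. To match the right-hand side of \eqref{e:bprod}, note $\sum_{m=n}^\infty w_m^{-1} = w_n^{-1} + S_n$; since $\beta_n \le 1$ forces $w_nS_n$ to be bounded below (via Lyons' bound), we have $w_n^{-1} = O(S_n)$ and hence $\sum_{m=n}^\infty w_m^{-1} \asymp S_n$.

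The main difficulty is recognizing that multiplying the $\beta$-recursion by $w_j$ cancels the leading-order drift and isolates a log-increment of precisely $(d_j-1)b_j/2$ per step; the factor $\tfrac{1}{2}$ combined with the factor $(d_j-1)$ on the right-hand side of the lemma is exactly what produces the exponent $2$ in $S_n^2$.
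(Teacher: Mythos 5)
Your proof is correct, and at heart it rests on the same key comparison as the paper's: each factor $\frac{1-(1-b_j)^{d_j}}{b_j d_j}$ agrees with $(1-b_j)^{(d_j-1)/2}$ up to a factor $\exp\bigl(O(b_j^2)\bigr)$, which is summable by \eqref{e:bval} and \eqref{e:bbsum}. What differs is the route to the exact product identity: the paper evaluates $\prod_{j=0}^{n-1}\frac{1-(1-b_j)^{d_j}}{b_j d_j}$ by summing, over $v\in T_n$, the probabilities that $v$ is the minimal vertex at level $n$ through which the root percolates, whereas you observe that this product telescopes as $u_0/u_n$ with $u_j := w_j\beta_j$, since $\beta_j = 1-(1-b_j)^{d_j}$ and $u_{j+1}=w_jd_jb_j$. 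Your telescoping is an algebraic reformulation of the same identity (indeed $u_0/u_n = \beta_0/(w_n\beta_n) = p_n\,P(\rho\leftrightarrow\infty)/(b_{n-1}w_n)$, which is precisely the paper's expression). The net effect is a somewhat more streamlined derivation of the same estimate; both arguments then conclude identically via the Taylor expansion and the observation that $w_n^{-1}=O(S_n)$, so that $\sum_{m\geq n}w_m^{-1}\asymp S_n$.
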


\proof
We start by deriving a rough estimate for $b_n$.
If $v\in T_n$ and $m> n$, then the expected
number of vertices $u\in T_m$ such that $\FCbetterTree(v,u)$ is
$\ew m/\ew n$. Therefore,~\eqref{e:lyonsstrong} gives
\begin{equation}
\label{e:bval}
b_n\asymp  \frac{1}{d_n\,\ew n}\,\Bigl(\sum_{m=n+1}^\infty \frac{1}{\ew m}\Bigr)^{-1}.
\end{equation}
This estimate in itself will not be fine enough to yield~\eqref{e:bprod},
but will be a useful first step.

For each node at level $j$ in the tree, we order its children
according to some fixed linear order (e.g., left to right,
if we think of the tree as embedded in the plane).
If $v$ is a vertex at level $n$ and $j\in\{1,\dots,n\}$,
let $u_j(v)$ denote the vertex at level $j$ that has
$v$ in its subtree, and let
$i_j(v)$ be the position of $u_j(v)$ among its siblings in the above order.
This induces an ordering on the vertices at level $n$:
we say that $v'<v$ if at the minimal $j$ such that
$i_j(v')\ne i_j(v)$ we have $i_j(v')<i_j(v)$.
Fix some $v\in T_n$.
Let $\ev L_v$ denote the event that $v$ is the minimal vertex
at level $n$ such
that $\rho$ percolates to $\infty$ through $v$.
Note that the probability that $v$ percolates to
$\infty$ within its subtree is $b_{n-1}/p_n$ and that
$\Pb{\FCbetter(\rho,v)}=\ew n/|T_n|$.
Hence
$$
\Pb{\ev L_v} = \frac{\ew n}{|T_n|}\,\frac{b_{n-1}}{p_n}\,
\prod_{j=0}^{n-1} (1-b_j)^{i_{j+1}(v)-1}.
$$
Since $\Pb{\percrt}=\sum_{v\in T_n}\Pb{\ev L_v}$, this gives
$$
\frac{p_n\,\Pb{\percrt}}{b_{n-1}\,\ew n} =
\frac{1}{|T_n|}\,
\sum_{v\in T_n}
\prod_{j=0}^{n-1} (1-b_j)^{i_{j+1}(v)-1}.
$$
We now use $|T_n|=\prod_{j=0}^{n-1} d_j$, and get
\begin{multline*}
\frac{p_n\,\Pb{\percrt}}{b_{n-1}\,\ew n}
=
\sum_{v\in T_n}
\prod_{j=0}^{n-1}\frac {(1-b_j)^{i_{j+1}(v)-1}}{d_{j}}
= \prod_{j=0}^{n-1}\sum_{i=1}^{d_{j}}  \frac {(1-b_j)^{i-1}}{d_{j}}
\\ 
= \prod_{j=0}^{n-1} \frac {1-(1-b_j)^{d_{j}}}{b_j\,d_{j}}
\,.
\end{multline*}
If we compare the factor corresponding to $j$ on the right with
$(1-b_j)^{(d_{j}-1)/2}$, we find that they agree up to a factor of $\exp\bl(O(b_j^2)\bigr)$,
where the implied constant may depend on $\sup_j d_j$ and
on $\sup_j b_j\le \sup_j p_j<1$.
Hence,
$$
\frac{p_n\,\Pb{\percrt}}{b_{n-1}\,\ew n}
=
\Bigl(\prod_{j=0}^{n-1} (1-b_j)^{(d_{j}-1)/2}\Bigr)
\exp\Bl(O(1)\sum_{j=0}^{n-1} b_j^2\Br).
$$
Now~\eqref{e:bprod} follows by squaring both sides,
using the estimate~\eqref{e:bval} for $b_{n-1}$, using
$p_n\,d_{n-1}\,\ew {n-1}=\ew n$ and noting that
$\sum_j b_j^2<\infty$ by~\eqref{e:bval} and~\eqref{e:bbsum}.
\QED

The following lemma can be seen as a partial converse to Lemma~\ref{l:prodfin},
but for convenience it is stated in a slightly different setting.

\begin{lemma}\label{l:piv}
Let $U_n$ denote the number of edges joining $T_{n-1}$ to $T_n$
through which $\rho$ percolates to $\infty$.
Then under the assumptions of Lemma~\ref{l:bprod}, we have
$$
\Pb{U_n=1}\Eb{U_n}\asymp 1\,.
$$
\end{lemma}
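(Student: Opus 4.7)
The plan is to compute both $\Pb{U_n=1}$ and $\Eb{U_n}$ essentially in closed form using the independence afforded by the tree structure, and then to reduce the product of the two to Lemma~\ref{l:bprod} combined with~\eqref{e:lyonsstrong}. Write $\beta_n:=P(\FCbetterTree(v,\infty))$ for $v\in T_n$; note that $b_{n-1}=p_n\,\beta_n$.

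For $\Eb{U_n}$: an edge $(u,v)$ between levels $n-1$ and $n$ contributes to $U_n$ iff three events on disjoint edge sets occur (the edge is open, $\rho\leftrightarrow u$, and $\FCbetterTree(v,\infty)$ within $T^v$). Multiplying probabilities and summing over the $|T_n|$ such edges gives $\Eb{U_n}=\ew n\,\beta_n$. Applying~\eqref{e:lyonsstrong} inside the subtree $T^v$ (whose level-$k$ expected cluster from $v$ is $\ew{n+k}/\ew n$) yields $\ew n\,\beta_n\asymp\bigl(\sum_{m>n}\ew m^{-1}\bigr)^{-1}$.

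For $\Pb{U_n=1}$: decompose $\{U_n=1\}$ as the disjoint union, over $v\in T_n$, of the event that $v$ is the unique level-$n$ vertex through which $\rho$ percolates to $\infty$. Given such $v$ with ancestor chain $\rho=u_0,u_1,\dots,u_n=v$, this event factors into independent pieces on disjoint edge sets: the $n$ edges along the path $\rho\to v$ must all be open; $v\mapsto\infty$ within $T^v$; and for every $j=0,\dots,n-1$ and every sibling $s$ of $u_{j+1}$ at level $j+1$, either the edge $(u_j,s)$ is closed or $s$ fails to percolate to $\infty$ within $T^s$. Identifying this last condition with ``no descendant of $s$ in $T_n$ satisfies the good-vertex condition'' uses that every infinite open path from $s$ in $T^s$ must cross level $n$; its probability equals $1-b_j$. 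Multiplying out and summing over $v$, using $|T_n|\prod_{j=1}^np_j=\ew n$, gives
\begin{equation*}
\Pb{U_n=1}=\ew n\,\beta_n\,\prod_{j=0}^{n-1}(1-b_j)^{d_j-1}\,.
\end{equation*}

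Combining the two formulas, $\Pb{U_n=1}\,\Eb{U_n}=(\ew n\,\beta_n)^2\prod_{j=0}^{n-1}(1-b_j)^{d_j-1}$. Lemma~\ref{l:bprod} replaces the product by $\bigl(\sum_{m\ge n}\ew m^{-1}\bigr)^2$ up to constants, and the earlier estimate for $\ew n\,\beta_n$ reduces the claim to $\sum_{m\ge n}\ew m^{-1}\asymp\sum_{m>n}\ew m^{-1}$. This last equivalence follows from $\ew n/\ew{n+1}=1/(d_n\,p_{n+1})$ being bounded below by a positive constant, which uses the standing assumption and $\sup_j d_j<\infty$. The only subtle point in the argument is the bookkeeping in the factorization of $\Pb{U_n=1}$: one must check that the pieces at different ancestor levels and different siblings really involve disjoint edge sets, and that collapsing the failure of the good-vertex condition on an entire sibling subtree into a single factor $1-b_j$ is correct. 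Everything else is algebra combined with the two cited estimates.
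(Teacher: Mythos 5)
Your proof is correct and follows essentially the same route as the paper: both derive that $\Eb{U_n}=d_{n-1}\,\ew{n-1}\,b_{n-1}$ and that $\Pb{U_n=1}=\Eb{U_n}\prod_{j=0}^{n-1}(1-b_j)^{d_j-1}$, then invoke Lemma~\ref{l:bprod} together with the estimate~\eqref{e:bval} (which you re-derive directly from~\eqref{e:lyonsstrong} as $\ew n\beta_n\asymp(\sum_{m>n}\ew m^{-1})^{-1}$). The only cosmetic difference is that the paper obtains the formula for $\Pb{U_n=1}$ by simply multiplying the display of Lemma~\ref{l:bprod} by $|T_n|\,p_1\cdots p_{n-1}\,b_{n-1}=\Eb{U_n}$, whereas you re-derive it from scratch via the sibling decomposition and then handle the small $\sum_{m\ge n}$ versus $\sum_{m>n}$ discrepancy explicitly at the end.
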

\proof
By~\eqref{e:bval} and~\eqref{e:bprod}, we have
$$
\prod_{j=0}^{n-1} (1-b_j)^{d_{j}-1}\asymp
(b_{n-1}\,\ew {n-1}\,d_{n-1})^{-2}=\Eb{U_n}^{-2}\,.
$$
Now multiply the left hand side by
$|T_n|\,p_1\,p_2\cdots p_{n-1}\,b_{n-1}$ and the
right hand side by its equal, $\Eb{U_n}$.
On the left hand side we then get $\Pb{U_n=1}$, as required.
\QED

\proofof{Theorem~\ref{th:manyswitch}}
The proof is based on a second moment argument.
For an edge $e$ let $X(e)$ denote the number of flips
(for $\FCbetter(\rho,\infty)$) occuring at times in $[0,1]$ when $e$ switches.
Let $m=m(e):=|e|$ denote the level of $e$; that is $e$
connects $T_m$ and $T_{m-1}$.
Set $\mathfrak X(e):=\One_{\{X(e)>0\}}$, $X_n:=\sum_{|e|\le n} X(e)$ and
$\mathfrak X_n:=\sum_{|e|\le n} \mathfrak X(e)$.
The second moment argument will be applied to $\mathfrak X_n$:
we will show that $\lim_{n\to\infty}\Eb{\mathfrak X_n}=\infty$,
and that $\sup_n \Eb{\mathfrak X_n^2}/\Eb{\mathfrak X_n}^2 <\infty$.

At this point, we use an equivalent version of the dynamics in which at rate 1, an edge
is {\it refreshed} and when refreshed, it chooses to be in state 1 with probability $p_e$.
Let now $Y_e$ be the set of times in which $e$ refreshed, and
let $A_e$ be the set of times $t\in[0,1]$ at which $e$ is pivotal for
$\{\FCbetter(\rho,\infty)\}$. Since $2\,p_m\,(1-p_m)$ is
the probability a refresh time is a switch time, and
 $Y_e$ is a Poisson point process with rate $1$ independent from $A_e$,
we have
$$
1-\exp\bigl(-\mu(A_e)\bigr) \ge
\Eb{\mathfrak X(e)\md A_e}\ge 2\,p_m\,(1-p_m)\,\bigl(1-\exp\bigl(-\mu(A_e)\bigr)\bigr),
$$
where $\mu$ denotes Lebesgue measure.  It follows that
$$
\Eb{\mathfrak X(e)\md A_e} \asymp \mu(A_e)\,.
$$
Moreover, Fubini gives
$$
\Eb{\mu(A_e)}=\Pb{e\text{ pivotal for $\{\FCbetter(\rho,\infty)\}$ at time }0}\,.
$$
Hence,
$$
\Eb{\mathfrak X_n}\asymp
\sum_{m=1}^n\sum_{|e|=m}\Pb{e\text{ pivotal for $\{\FCbetter(\rho,\infty)\}$ at time }0}\,.
$$
Lemma~\ref{l:piv} easily implies that if $|e|=m$, then
$$
\Pb{e\text{ pivotal for $\{\FCbetter(\rho,\infty)\}$ at time } 0}
\asymp \frac{1}{|T_m|\,\ew {m-1}\,d_{m-1}\,b_{m-1}}\,.
$$
The above together with~\eqref{e:bval} gives
\begin{equation}
\label{e:EXn}
\Eb{\mathfrak X_n}
\asymp
\sum_{m=1}^n
\sum_{k=m}^\infty \frac{1}{\ew k}\,.
\end{equation}

We now turn to estimating $\Eb{\mathfrak X_n^2}$.
Let $e,e'$ be two different edges at levels $m$ and $m'$, respectively,
where $m,m'\le n$.
Then $X(e)\,X(e')\le |Y_e\cap A_e|\cdot|Y_{e'}\cap A_{e'}|$.
Let $\nu_{e,e'}$ denote the counting measure on the
set $(Y_e\cap A_e)\times (Y_{e'}\cap A_{e'})\subseteq[0,1]^2$,
and let $I,I'\subseteq[0,1]$ be disjoint time intervals.
Note that $Y_e\cap I$, $Y_{e'}\cap I'$ and
$(A_e\cap I,A_{e'}\cap I')$ are independent.
(Note however that $A_e\cap I$ is usually not independent from $A_{e'}\cap I'$.)
Therefore
$$
\EB{\nu_{e,e'}(I\times I')\md
  A_e\cap I,\,A_{e'}\cap I'}
=
\mu(A_e\cap I)\, \mu(A_{e'}\cap I')\,.
$$
Hence
$$
\Eb{\nu_{e,e'}(I\times I')}=
\int_{I\times I'}\Pb{t\in A_e,\,s\in A_{e'}}\,dt\,ds\,.
$$
For $e\neq e'$, $\nu_{e,e'}$ gives no mass to the diagonal, and hence we can 
conclude that
$$ 
\Eb{X(e)\,X(e')} \le \Eb{\nu_{e,e'}([0,1]\times [0,1])}=
\int_0^1\int_0^1\Pb{t\in A_e,\,s\in A_{e'}}\,dt\,ds\,.
$$ 
Since
$\sum_{|e|\le n}\mathfrak X(e)\,\mathfrak X(e)=\mathfrak X_n$,
we have
$$
\mathfrak X_n^2=
\mathfrak X_n+\sum_{|e|,|e'|\le n}\One_{\{e\ne e'\}}\,
\mathfrak X(e)\,\mathfrak X(e')
\le
\mathfrak X_n+\sum_{|e|,|e'|\le n}\One_{\{e\ne e'\}}\,
X(e)\,X(e')
\,.
$$
Consequently,
$$
\Eb{\mathfrak X_n^2}
\le
\Eb{\mathfrak X_n}+
\sum_{|e|,|e'|\le n}\One_{\{e\ne e'\}}\,
\int_0^1\int_0^1\Pb{t\in A_e,\,s\in A_{e'}}\,dt\,ds\,.
$$

At this point, we break up the pairs $(e,e')$ for which $e\neq e'$ into two sets, those where
$e$ and $e'$ don't lie on the same path from the root to $\infty$ (which is the generic case)
and those where they do lie on the same path. Call the first class $\mathcal E_1$ and
the second class $\mathcal E_2$. We consider now pairs $(e,e')$ in $\mathcal E_1$.

Let $v_0=\rho,v_1,\dots,v_{m}$ denote the path
from the root $\rho$ to the endpoint of $e$ at level $m=|e|$,
and let
$v'_0,v'_1,\dots,v'_{m'}$ denote the path
from the root to the endpoint of $e'$ at level $m'=|e'|$.
Let $k\le (m-1)\wedge (m'-1)$ be maximal such that $v_k=v'_k$.
Also, fix $s,t\in[0,1]$ and set $r:=|s-t|$.
Note that for every $j\in\N_+$ and any edge at level $j$,
the probability that the edge is open at time $s$ and at time $t$ is
$p_j^2+(1-p_j)\,p_j\,\exp(-r)$.
For $j=0,\dots,m-1$, let $\ev U_j$ denote the event that at time $t$ we have
$\FCbetter(v_j,\infty)$ inside $T^{v_j}\setminus v_{j+1}$,
and let $\ev U_j'$ denote the corresponding event with
each $v_i$ replaced by $v_i'$, with $t$ replaced by $s$ and with
$m$ replaced by $m'$.
Note that the event $\{t\in A_e,\,s\in A_{e'}\}$ is contained
in the intersection of the following events:
$\ev L:=\{\FCC(\rho,t,v_k),\,\FCC(\rho,s,v_k)\}$,
$\ev Q_1:=\{\FCCTree(v_k,t,v_{m-1})\}$,
$\ev Q_1':=\{\FCCTree(v_k,s,v'_{m'-1})\}$,
$\ev Q_2:=\{\FCCTree(v_m,t,\infty)\}$,
$\ev Q_2':=\{\FCCTree(v'_{m'},s,\infty)\}$,
$\ev Z_1:=\bigcap_{j=0}^{k-1}\neg \ev U_j$,
$\ev Z_2:=\bigcap_{j={k+1}}^{m-1}\neg \ev U_j$,
$\ev Z'_2:=\bigcap_{j={k+1}}^{m'-1}\neg \ev U'_j$,
and that these events are all independent.
Consequently,
\begin{multline*}
\Pb{t\in A_e,\,s\in A_{e'}}
\le
\prod_{j=1}^k\bigl( p_j^2+(1-p_j)\,p_j\,\exp(-r)\bigr)\times
\prod_{j=k+1}^{m-1} p_j\times
\prod_{j=k+1}^{m'-1} p_j\times
{} \\ {}
\frac{b_{m-1}}{p_m}\times
\frac{b_{m'-1}}{p_{m'}}\times
\prod_{j=0}^{k-1}(1-b_j)^{d_j-1}\times
\prod_{j=k+1}^{m-1}(1-b_j)^{d_j-1}\times
\prod_{j=k+1}^{m'-1}(1-b_j)^{d_j-1}.
\end{multline*}
Setting $\delta:=1-\sup_j p_j$ and noting that
$r\le 1$, we may estimate the first product as
$$
\le
\bigl(1-\delta\,r/3)^k
\prod_{j=1}^k p_j
\le \exp\Bigl(-\frac{\delta\,k\,r}3\Bigr)\prod_{j=1}^k p_j\,.
$$
Using the above and Lemma~\ref{l:bprod}, we arrive at the estimate
\begin{multline*}
\Pb{t\in A_e,\,s\in A_{e'}}
\le
O(1)\,\exp\Bigl(-\frac{\delta\,k\,r}3\Bigr)\times {}\\ {}
\frac{\Bigl(\prod_{j=1}^{m-1}p_j\Bigr)\Bigl( \prod_{j=1}^{m'-1}p_j\Bigr)
 b_{m-1}\, b_{m'-1}
 \Bigl(\sum_{j=m}^\infty \ew j^{-1}\Bigr)^2
 \Bigl(\sum_{j=m'}^\infty \ew j^{-1}\Bigr)^2
}
{\Bigl(\prod_{j=1}^k p_j\Bigr) (1-b_k)^{2d_k-2}
 \Bigl(\sum_{j=k}^\infty \ew j^{-1}\Bigr)^2
}\,.
\end{multline*}
Since we are assuming $\sup_j d_j<\infty$ and since $b_j\le p_{j+1}\le 1-\delta$,
we have $(1-b_k)^{2-2d_k}=O(1)$, and that factor may be dropped.
Now note that when $(t,s)$ is uniform in $[0,1]^2$, the probability that $r$
is in any interval $I\subseteq [0,1]$ is at most twice the length of $I$.
Since $\int_0^1\exp(-\delta\,k\,r/3)\,dr\le O\bl(1/(\delta\, (k+1))\br)=O(1/(k+1))$,
we get
\begin{multline*}
\int_0^1\int_0^1
\Pb{t\in A_e,\,s\in A_{e'}}\,dt\,ds \le
\\
O(1)\,
\frac{\Bigl(\prod_{j=1}^{m-1}p_j\Bigr)\Bigl( \prod_{j=1}^{m'-1}p_j\Bigr)
 b_{m-1}\, b_{m'-1}
 \Bigl(\sum_{j=m}^\infty \ew j^{-1}\Bigr)^2
 \Bigl(\sum_{j=m'}^\infty \ew j^{-1}\Bigr)^2
}
{(k+1)\Bigl(\prod_{j=1}^k p_j\Bigr)
 \Bigl(\sum_{j=k}^\infty \ew j^{-1}\Bigr)^2
}\,.
\end{multline*}
If we fix $m,m'$ and $v_k$, there are at most $|T_m|/|T_k|$ possible
choices for $e$ and $|T_{m'}|/|T_k|$ possible choices for $e'$.
Thus, there are at most $|T_m|\,|T_{m'}|\,|T_k|^{-2}$ possible
choices for pairs $(e,e')$. Since $|T_j|=d_{j-1}\,|T_{j-1}|$
and $T_j\,\prod_{i=1}^jp_i=\ew j$, the sum of the above over
all such pairs $(e,e')$ is
\begin{multline*}
\le O(1)\,
\frac{d_{m-1}\,d_{m'-1}\,\ew {m-1}\,\ew {m'-1}\,
 b_{m-1}\, b_{m'-1}
 \Bigl(\sum_{j=m}^\infty \ew j^{-1}\Bigr)^2
 \Bigl(\sum_{j=m'}^\infty \ew j^{-1}\Bigr)^2
}
{(k+1)\,|T_k|\,\ew k
 \Bigl(\sum_{j=k}^\infty \ew j^{-1}\Bigr)^2
}
\\
\stackrel{\eqref{e:bval}}\asymp
\frac{
 \Bigl(\sum_{j=m}^\infty \ew j^{-1}\Bigr)
 \Bigl(\sum_{j=m'}^\infty \ew j^{-1}\Bigr)
}
{(k+1)\,|T_k|\,\ew k
 \Bigl(\sum_{j=k}^\infty \ew j^{-1}\Bigr)^2
}\,\,\, .
\end{multline*}
We now sum over all possible choices for $v_k$, which eliminates
the $|T_k|^{-1}$ factor.
Next, we bound the sum of the resulting expression for $m\in\{k+1,k+2,\dots,n\}$
and $m'\in\{k+1,k+2,\dots,n\}$ by summing over all $m,m'=1,2,\dots,n$.
Finally, we sum over $k=0,1,\dots,n-1$,
to obtain
\begin{multline*}
\sum_{|e|,|e'|\le n} \One_{\{(e,e')\in \mathcal E_1\}}\,
\int_0^1\int_0^1
\Pb{t\in A_e,\,s\in A_{e'}}\,dt\,ds
\\
 \le
O(1)\left( \sum_{m=1}^n\sum_{j=m}^\infty \ew j^{-1}\right)^2
\sum_{k=0}^\infty
\Bigl((k+1)\,\ew k \Bigl(\sum_{j=k}^\infty \ew j^{-1}\Bigr)^2\Bigr)^{-1}.
\end{multline*}
By~\eqref{e:finsmp} and~\eqref{e:EXn},
this is at most $O(1)\,\Eb{\mathfrak X_n}^2$.

We now explain the necessary modifications for the case
$(e,e')\in\mathcal E_2$. Let $m=|e| < |e'|=m'$.
Using the same notations as above,
it is easy to see that
the event $\{t\in A_e,\,s\in A_{e'}\}$ is contained
in the intersection of the following independent events:
$\{\FCC(\rho,t,v_{m-1}),\,\FCC(\rho,s,v_{m-1})\}$,
$\{\FCCTree(v_m,s,v_{m'-1})\}$,
$\{\FCCTree(v_{m'},s,\infty)\}$ 
and $\bigcap_{j={0}}^{m'-1}\neg \ev U'_j$.
This leads, after a computation exactly as before, to
\begin{multline*}
\int_0^1\int_0^1
\Pb{t\in A_e,\,s\in A_{e'}}\,dt\,ds
\\
 \le O(1)\,
\frac{\Bigl( \prod_{j=1}^{m'-1}p_j\Bigr)\, b_{m'-1}
 \prod_{j=0}^{m'-1}(1-b_j)^{d_j-1}
}
{m+1
}\,.
\end{multline*}
With $e$ and $m'$ fixed, there are at most $|T_{m'}|/|T_m|$ possible
choices for $e'$ and so the sum of the above over such $e'$ is at most
$$
O(1)\,\frac{ w_{m'-1}\, b_{m'-1} \prod_{j=0}^{m'-1}(1-b_j)^{d_j-1}}
{m |T_m|}\le
O(1)\,\frac{ \sum_{k=m'}^\infty \frac{1}{w_k}}
{m |T_m|}\,,
$$
by \eqref{e:bprod} and \eqref{e:bval}. At level $m$, there are $|T_m|$ choices for $e$. As
$m'\ge m+1$, we can sum over $m'$ from 1 to $n$ and then sum over $m$ from 1 to $n$ to yield
\begin{multline*}
\sum_{|e|,|e'|\le n} \One_{\{(e,e')\in \mathcal E_2\}}\,
\int_0^1\int_0^1
\Pb{t\in A_e,\,s\in A_{e'}}\,dt\,ds
\\
 \le
O(1)\left( \sum_{m=1}^n\sum_{j=m}^\infty \ew j^{-1}\right)
\sum_{m=1}^n \frac{1}{m} 
\\=
O(1)\left( \sum_{m=1}^n\sum_{j=m}^\infty \ew j^{-1}\right)^2
\frac{\sum_{m=1}^n \frac{1}{m}}
{\left( \sum_{m=1}^n\sum_{j=m}^\infty \ew j^{-1}\right)}.
\end{multline*}
By~\eqref{e:fm} and~\eqref{e:EXn}, this is also at most $O(1)\,\Eb{\mathfrak X_n}^2$.

All of the above therefore yields
$\Eb{\mathfrak X_n^2} \le \Eb{\mathfrak X_n}+O(1)\,\Eb{\mathfrak X_n}^2$.
Since $\lim_{n\to\infty}\Eb{\mathfrak X_n}=\infty$ by~\eqref{e:fm}
and~\eqref{e:EXn}, this gives
$\Eb{\mathfrak X_n^2} \le O(1)\,\Eb{\mathfrak X_n}^2$.
A one-sided Chebyshev inequality (see, e.g., Lemma 5.4 in \cite{HPS}) 
or alternatively the Paley Zygmund inequality yields
that there is some $c>0$, which does not depend on
$n$, such that $\PB{\mathfrak X_n\ge c\,\Es{\mathfrak X_n}}\ge c$.
Hence
$\Pb{\lim_{n\to\infty}\mathfrak X_n=\infty}\ge c$,
which completes the proof.
\QED

\proofof{Theorem~\ref{th:dyntrans}.(ii)}
This easily follows from Theorem~\ref{th:manyswitch}.
\QED

\section{Proof of Theorem \ref{th:pivotal}}\label{sec:6}

We start with a lemma connecting the concepts of flip time and influence.

\begin{lemma} \label{lemma:flip=pivotal}
Fix a vertex $x$. Then
$$
2\sum_e I_x(e)p_e(1-p_e)=  E[|{\mathcal S}|],
$$
where ${\mathcal S}$ is the set of flip times for $x$ during $[0,1]$.
\end{lemma}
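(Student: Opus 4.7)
The plan is to decompose $\mathcal{S}$ according to which edge is responsible for each flip and, for each edge separately, to compute the expected number of contributing switch times by a Campbell / Palm calculus argument based on the Poisson refresh representation of the dynamics. Specifically, for each edge $e$ let $\mathcal{S}_e\subseteq[0,1]$ be the set of times $t$ at which $e$ switches state and $e$ is pivotal for $\{\FCC(x,t,\infty)\}$. Because the two-state chains attached to distinct edges are driven by independent Poisson clocks, no two edges switch simultaneously almost surely, so $|\mathcal{S}|=\sum_e|\mathcal{S}_e|$ a.s., \emph{provided} that $\mathcal{S}=\bigcup_e \mathcal{S}_e$. To verify this, note that $\{\FCbetter(x,\infty)\}$ is monotone: any edge $e$ pivotal for it at time $t$ satisfies $\FCC(x,t,\infty)$ iff $e$ is open at $t$; by the standing convention $e$ is open at every switch time, so $x$ automatically percolates at each $t\in\mathcal{S}_e$, and the ``$x$ percolates at time $t$'' clause in the definition of a flip time comes for free.

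Next I would pass to the equivalent refresh representation already used in the proof of Theorem~\ref{th:manyswitch}: each edge $e$ carries an independent rate-$1$ Poisson process $Y_e$ of refresh times, and at each refresh $e$ is reset to $1$ with probability $p_e$, independently of everything else. Applying Campbell's (equivalently Mecke's) formula to $Y_e$,
$$
E[|\mathcal{S}_e|]=\int_0^1 P\bigl(\tau\text{ is a switching refresh and }e\text{ is pivotal at }\tau\bigm|\tau\in Y_e\bigr)\,d\tau\,.
$$
Conditional on $\tau\in Y_e$, Slivnyak--Mecke combined with the independence of the various $Y_{e'}$ and refresh coins implies that (a) the state of $e$ just before $\tau$ has its stationary law ($1$ w.p.\ $p_e$), (b) the new value at $\tau$ is independently $1$ w.p.\ $p_e$, and (c) the states of the other edges at time $\tau$ form the stationary product measure and are independent of (a) and (b). The switch event is a function of (a) and (b) and has probability $2p_e(1-p_e)$; the pivotality event is a function of (c) and has probability $I_x(e)$; by independence these multiply, giving integrand $2p_e(1-p_e)\,I_x(e)$. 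Integrating over $\tau$ and summing over $e$ (Tonelli, since all summands are nonnegative) yields the stated identity.

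I do not expect any step to be a genuine obstacle. The calculation is a stationary analogue of the familiar ``expected number of pivotals equals the sum of influences,'' weighted by the equilibrium switch rate $2p_e(1-p_e)$. The only items needing a sentence of care are the monotonicity remark that identifies $\mathcal{S}$ with $\bigcup_e\mathcal{S}_e$ and the standard conditional independence extracted from the Poisson refresh representation; neither is subtle.
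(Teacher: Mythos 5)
Your proof is correct and follows essentially the same route as the paper's: decompose $\mathcal{S}$ into the edge-by-edge contributions $\mathcal{S}_e$, show $E[|\mathcal{S}_e|]=2I_x(e)p_e(1-p_e)$ by exploiting the independence between the switching mechanism on $e$ and the pivotality event (which depends only on the other edges), and sum. The paper phrases the per-edge computation as an infinitesimal argument (``during $[t,t+dt]$ the edge switches once with probability $2p_e(1-p_e)\,dt+O(dt^2)$, and conditionally the pivotality probability is $I_x(e)$''), whereas you obtain the same conclusion more formally via Campbell/Mecke applied to the refresh Poisson process; both are valid. Your observation that the convention (``edges are on at switch times'') plus monotonicity makes the ``$x$ percolates at time $t$'' clause automatic, so that $\mathcal{S}$ really equals $\bigcup_e\mathcal{S}_e$, is a point the paper leaves implicit and is worth making explicit.
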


\proof
Fix $e$. The probability that during $[t,t+dt]$ the edge $e$ switches its state
precisely once is easily seen to be $2\,p_e\,(1-p_e)\,dt+O(dt^2)$. 
Conditioning on that time, the probability that $e$ is pivotal for 
$\{\FCbetter(x,\infty)\}$ at that time is $I_x(e)$.
Hence, the probability that there is a flip associated to $e$ during $[t,t+dt]$
is $2\,I_x(e)\,p_e\,(1-p_e)\,dt+O(dt^2)$. It follows that
$E[{\mathcal S}_e]= 2\,I_x(e)\,p_e\,(1-p_e)$ where
${\mathcal S}_e$ is the set of flip times associated to $e$ during $[0,1]$.
Summing over $e$ yields the result.
\QED

\proofof{Theorem~\ref{th:pivotal}}
Fix $x$.
Let ${\mathcal E}_n$ be the set of edges which are within graph distance
$n$ of $x$ and let ${\mathcal F}_n$ be the $\sigma$-algebra generated
by the evolution of the edges in ${\mathcal E}_n$ during
the time interval $[0,1]$. Let
$$
X_n(t)=
X_n(\omega,t):= P(\FCC(x,t,\infty)|{\mathcal F}_n).
$$
While conditional probabilities are usually only defined a.s.,
it is clear that there is a canonical version of these
conditional probabilities and these will always be used.
Let $V_n$ denote the total variation of $X_n(t)$ on $[0,1]$.

The following two lemmas are left to the reader.

\begin{lemma} \label{lemma:totalvariation}
$$
E[V_n]=2\sum_{e\in {\mathcal E}_n}I(e) p_e(1-p_e).
$$
\end{lemma}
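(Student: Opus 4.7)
The plan is to represent $X_n(t)$ as a piecewise constant function of $t$ that jumps only when an edge in ${\mathcal E}_n$ switches state, and then to compute the expected total variation by summing the expected jumps edgewise.

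First I would observe that since the evolutions of the edges outside ${\mathcal E}_n$ are independent of ${\mathcal F}_n$, and since at any fixed time $t$ each such edge is independently open with probability $p_e$ by stationarity, the conditional probability $X_n(t)$ depends on ${\mathcal F}_n$ only through the configuration $\eta_t\in\{0,1\}^{{\mathcal E}_n}$ at time $t$. More precisely, $X_n(t)=g(\eta_t)$, where $g(\eta)$ is the static percolation probability $P(\FCbetter(x,\infty))$ in the graph in which edges of ${\mathcal E}_n$ are fixed according to $\eta$ and all other edges are independently open with probability $p_e$.

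Second, since ${\mathcal E}_n$ is finite and almost surely no two edges switch simultaneously, $t\mapsto\eta_t$ is piecewise constant with jumps in one coordinate at a time, so
\[
V_n=\sum_{e\in{\mathcal E}_n}\,\sum_{t\in S_e}\bigl|g(\eta_t)-g(\eta_{t-})\bigr|,
\]
where $S_e\subseteq[0,1]$ is the (random) set of switch times of $e$. At each $t\in S_e$, the jump is $|g(\eta^{e=1})-g(\eta^{e=0})|$ evaluated at the current configuration of ${\mathcal E}_n\setminus\{e\}$.

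Third, I would compute $E[V_n]$ edge by edge. The switch times of $e$ form a stationary point process of intensity $2p_e(1-p_e)$ on $[0,1]$, and, by independence of the Poisson clocks driving distinct edges, at a switch time of $e$ the configuration of the remaining edges of ${\mathcal E}_n$ is still distributed according to the stationary product measure. Applying Campbell's formula, we get
\[
E\Bigl[\sum_{t\in S_e}\bigl|g(\eta_t)-g(\eta_{t-})\bigr|\Bigr]
=2p_e(1-p_e)\,E\bigl[\bigl|g(\eta^{e=1})-g(\eta^{e=0})\bigr|\bigr]
=2p_e(1-p_e)\,I_x(e),
\]
since the expectation on the left equals the probability, under the full stationary product measure (the expectation over edges outside ${\mathcal E}_n$ has already been absorbed into $g$), that $e$ is pivotal for $\{\FCbetter(x,\infty)\}$. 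Summing over $e\in{\mathcal E}_n$ yields the claim.

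The main subtlety to nail down is the identification $X_n(t)=g(\eta_t)$ as a genuine pointwise (not merely a.s.) equality, so that the total variation of $X_n(\cdot)$ is really the sum of jumps at switch times. This follows from choosing the canonical regular version of the conditional probability, as the excerpt has already flagged. Once this is in place, the rest is a routine Campbell/Fubini computation.
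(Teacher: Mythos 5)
Your proof is correct, and it is essentially the argument the authors have in mind (the paper explicitly leaves this lemma to the reader). The two load-bearing observations — that $X_n(t)=g(\eta_t)$ for a deterministic function $g$ of the configuration on ${\mathcal E}_n$ alone (using stationarity and independence of the outside edges), and that at a switch time of $e$ the other edges of ${\mathcal E}_n$ are still in stationary product law (so Campbell plus monotonicity of percolation identifies the expected jump size with $I_x(e)$) — are exactly what makes the lemma routine, and you have both. This is also the natural analogue of the computation already carried out in Lemma~\ref{lemma:flip=pivotal}.
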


\begin{lemma} \label{lemma:Vmartingale}
$\{V_n\}_{n\ge 1}$ is a submartingale.
\end{lemma}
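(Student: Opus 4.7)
The plan is to derive the submartingale property directly from the tower property of conditional expectation. Since ${\mathcal F}_n\subseteq {\mathcal F}_{n+1}$, the tower property gives $X_n(t)=\Eb{X_{n+1}(t)\md {\mathcal F}_n}$ a.s.\ for each fixed $t\in[0,1]$; in other words, for each fixed $t$, $\{X_n(t)\}_n$ is itself a martingale in $n$. The triangle inequality for conditional expectation then yields, for any $s,t\in[0,1]$,
$$
|X_n(t)-X_n(s)|=\bigl|\Eb{X_{n+1}(t)-X_{n+1}(s)\md{\mathcal F}_n}\bigr|\le \Eb{|X_{n+1}(t)-X_{n+1}(s)|\md{\mathcal F}_n}.
$$

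Summing this pointwise inequality over consecutive points of any finite partition $P\colon 0=t_0<t_1<\cdots<t_k=1$ gives
$V_n^P\le \Eb{V_{n+1}^P\md{\mathcal F}_n}\le \Eb{V_{n+1}\md{\mathcal F}_n}$,
where $V_n^P:=\sum_i|X_n(t_{i+1})-X_n(t_i)|$ denotes the variation of $X_n$ along $P$. Taking the supremum over $P$ then yields $V_n\le \Eb{V_{n+1}\md {\mathcal F}_n}$ a.s., which is the submartingale property. Integrability of $V_n$ for each fixed $n$ is automatic, because ${\mathcal E}_n$ is finite; Lemma~\ref{lemma:totalvariation} makes this quantitative, giving $\Eb{V_n}=2\sum_{e\in {\mathcal E}_n}I_x(e)\,p_e(1-p_e)<\infty$.

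The one technical point is passing from a fixed partition to the a priori uncountable supremum defining $V_n$. This is handled by the observation that the canonical version of $X_n(\cdot,\omega)$ is a step function in $t$ with only finitely many jumps, for a.e.\ $\omega$: conditioning on ${\mathcal F}_n$ fixes the (finitely many) edges in ${\mathcal E}_n$ as functions of time, the outer edges evolve in a stationary and mutually independent manner, and $X_n(t)$ depends on $\omega$ only through the state of ${\mathcal E}_n$ at the single time $t$. Consequently, for any deterministic sequence of partitions $\{P_k\}$ with mesh tending to zero, one has $V_n^{P_k}=V_n$ for all $k$ sufficiently large (with $k$ possibly depending on $\omega$), so applying the displayed inequality to $P_k$ and letting $k\to\infty$ yields $V_n\le \Eb{V_{n+1}\md{\mathcal F}_n}$ a.s.\ and completes the proof. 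Beyond this routine measurability/regularity remark, the argument is an immediate consequence of the tower property and the triangle inequality, so I do not anticipate any substantive obstacle.
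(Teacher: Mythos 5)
Your proof is correct and is essentially the only natural one; since the paper leaves both Lemma~\ref{lemma:totalvariation} and Lemma~\ref{lemma:Vmartingale} to the reader, the intended argument is exactly the tower-property-plus-triangle-inequality computation you give. The one point that requires care — passing from finite partitions to the total variation — is handled properly by observing that the canonical version of $X_n(\cdot,\omega)$ is a.s.\ a step function (constant between the finitely many switching times of the edges in $\mathcal{E}_n$, with no isolated single-point spikes under the paper's ``closed on-set'' convention), so that $V_n^{P_k}\uparrow V_n$ along any deterministic sequence of refinements with mesh tending to zero, and the a.s.\ inequality survives the countable limit.
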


By our assumption~\eqref{e:piv} and by Lemma~\ref{lemma:totalvariation},
we have $\sup_n E(V_n)<\infty$.
Since $\{V_n\}_{n\ge 1}$ is a nonnegative submartingale, this implies that
there is an a.s.\ limit $V:=\lim_{n\to\infty} V_n$ satisfying
$E(V)<\infty$.
Now, for all $t$, the Martingale convergence theorem tells us that
$X_n(t)$ converges a.s.\ to $\One_{\{\FCC(x,t,\infty)\}}$.
By Fubini's theorem, for a.e.\ $\omega$, there exists
$A_\omega\subseteq [0,1]$ such that $\mu(A_\omega)=1$
($\mu$ is Lebesgue measure here) and
\begin{equation} \label{e:**}
\lim_{n\to\infty}X_n(\omega,t)=
\One_{\{\FCC(x,t,\infty)\}} \mbox{ for all } t\in A_\omega.
\end{equation}
Now define
$$
\tilde{X}(\omega,t):=
\left\{ \begin{array}{ll}
\One_{\{\FCC(x,t,\infty)\}}
&\text{if }t\in A_\omega,\\
\limsup_{s \uparrow t, s\in A_\omega}
\One_{\{\FCC(x,s,\infty)\}}
&\text{if }t\not\in A_\omega.
\end{array}
\right.
$$
Statement (\ref{e:**}) implies that the total variation of
$\tilde{X}$ restricted to time points in $A_\omega$
is at most $V$ for a.e.\ $\omega$. It is then easy to check that
the total variation of $\tilde{X}$ over $[0,1]$ is then
at most $V$ for a.e.\ $\omega$ as well. We conclude that a.s.\
$\One_{\{\FCC(x,t,\infty)\}}$ is equal a.s.\ to a function of bounded variation.

We now show that the fact that a.s.\ $\One_{\{\FCC(x,t,\infty)\}}$ is
equal a.e.\ to a function of bounded variation implies that there are no
exceptional times.
Let $X$ be the Lebesgue measure of the amount of time that $x$
percolates during $[0,1]$. By Fubini's theorem, $E(X)$
is the probability that $x$ percolates. It follows that with positive
probability, $X> 0$. If there were exceptional times of nonpercolation,
an easy application of Kolmogorov's 0-1 law tells us that a.s.\ there
would be such times in every nonempty interval. However,
the latter together with the fact that the set of times at which $x$ does not
percolate is open and that $X> 0$ contradicts the fact that
$\One_{\{\FCC(x,t,\infty)\}}$ is equal a.s.\ to a function of bounded
variation.
\QED

\section{A 0-1 Law} \label{sec:01law}
In this section, we present a 0-1 law concerning the process.
In addition to being of interest in itself, we believe it might be
useful for obtaining a better understanding of the path behavior of our
process and might be relevant to some of the problems at the end of the paper.

\begin{theorem}\label{t.01}
Consider dynamical percolation $(\omega_t:t\in\R)$ on a spherically symmetric tree $\tree$
with spherically symmetric edge probabilities,
and let $\perctime$ be the set of times $t\in\R$ such that the cluster of the
root is infinite in $\omega_t$.
If $\Pb{0\in\partial \perctime}>0$, then a.s.\ $\perctime=\partial \perctime$
(and hence by Lemma~\ref{lemma:staysclose} there is a.s.\ a dense set of
times $t\in\R$ in which there is no infinite cluster in $\omega_t$).
\end{theorem}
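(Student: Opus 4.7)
The plan is to use Blumenthal's 0--1 law and time-reversibility of the stationary Markov process to dichotomize percolating configurations into an ``interior'' type and a ``boundary'' type, and then to rule out the interior type under the hypothesis.

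A preliminary observation is that $\perctime$ is closed almost surely: by the standing convention each edge's on-set is closed, so for each finite path $\pi$ from $\rho$ to level $n$ the set of times at which all edges of $\pi$ are on is closed, whence $\{t : W_{n,t}>0\} = \bigcup_\pi \{\text{all edges of }\pi\text{ on at time }t\}$ is a finite union of closed sets and $\perctime=\bigcap_n\{t : W_{n,t}>0\}$ is closed.

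Next, Blumenthal's 0--1 law applied to the Feller process $(\omega_t)_{t\ge 0}$ started at a configuration $\omega$ implies that the germ $\sigma$-algebra at $0^+$ is trivial under $\Prob_\omega$. The event $E^+ := \{\exists\delta>0,\ (0,\delta)\subseteq\perctime\}$ lies in this germ, so its conditional probability $g^+(\omega)$ given $\omega_0=\omega$ takes only the values $0$ or $1$. Time-reversibility of the stationary Markov process gives $g^-(\omega)=g^+(\omega)=:g(\omega)$ for the left analog $E^-$, and the Markov property at $0$ makes $E^+$ and $E^-$ conditionally independent given $\omega_0$. For $\omega\in\perctime$ this produces the dichotomy: either $g(\omega)=1$ (``type~I'', so that almost surely $0\in\interior\perctime$) or $g(\omega)=0$ (``type~B'', so that almost surely $0\in\partial\perctime$). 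The theorem's hypothesis is equivalent to saying that type~B carries positive mass.

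To prove the theorem it suffices to show type~I carries zero mass. Assume for contradiction $\alpha:=\Prob(\omega_0\text{ type I})>0$, and let $T:=\inf\{t>0 : t\notin\perctime\}$; on type~I, $T>0$ a.s. On the event $\{T<\infty\}$, closedness of $\perctime$ gives $\omega_T\in\perctime$, and the strong Markov property at the forward stopping time $T$, together with the definition of $T$, forces $g^+(\omega_T)=0$ and hence (by reversibility) $g^-(\omega_T)=0$, so that $\omega_T$ is of type~B. However, the construction of $T$ ensures $(T-\delta,T)\subseteq\perctime$ for every $\delta\in(0,T)$, so the backward persistence event $E^-(T)$ holds \emph{deterministically} on $\{T<\infty\}$. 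This is incompatible with the type~B characterization $g^-(\omega_T)=0$---which says that under the stationary reversed Markov law from $\omega_T$ the event $E^-$ has conditional probability zero---and forces $\Prob(T<\infty,\ \omega_0\text{ type I})=0$. The symmetric argument for the backward time $S := \sup\{t<0 : t\notin\perctime\}$ gives $\perctime\supseteq\mathbb{R}$ almost surely on type~I, so $\Prob(\perctime=\mathbb{R})\ge\alpha>0$; the 0--1 law for the translation-invariant event $\{\perctime=\mathbb{R}\}$ then gives $\perctime=\mathbb{R}$ almost surely, contradicting positivity of type~B. The main obstacle is the final incompatibility step: at a random stopping time $T$ the conditional law of the past given $\omega_T$ is not automatically the reversed stationary Markov law, so identifying $\Prob(E^-(T)\mid\omega_T)$ with $g^-(\omega_T)$ requires a Palm-theoretic argument applied to the stationary point process of right endpoints of components of $\interior\perctime$, under whose Palm measure $0$ is simultaneously a right endpoint (forcing $E^-(0)$ to hold and $\omega_0\in\partial\perctime$) and a configuration for which the Markov identification $\Prob_{\mathrm{Palm}}(E^-(0)\mid\omega_0)=g^-(\omega_0)$ forces $g^-(\omega_0)=1$, contradicting the type~B classification of $\omega_0$.
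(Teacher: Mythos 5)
Your proposal takes a genuinely different route from the paper, and it contains a real gap --- one which, to your credit, you partially flag yourself, but then prematurely declare resolved by Palm theory.

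The paper proves the result by exploiting the tree structure. It introduces $h_v(\omega)$ (your $g^+$ localized to subtrees), proves the recursion $h_v(\omega)=\max_j\One_{[v,v_j]\in\omega}\,h_{v_j}(\omega)$ (Lemma~\ref{l.hered}), and combines this with $\EB{h_{\rho}(\omega)\md \ev F_n}=1-(1-q_n)^{W_n}$ and the fact that $W_n/\ew n\to W_\infty>0$ a.s.\ on $\percrt$ (Lemma~\ref{l.wake}) to deduce that $\log(1-q_n)W_n$ converges a.s.\ either to $0$ or to $-\infty$, whence $h_\rho$ is a.s.\ \emph{identically} $0$ on percolating configurations, or a.s.\ \emph{identically} $\One_{\{\percrt\}}$. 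This dichotomy is exactly the heart of the theorem, and it is derived from spherical symmetry. Your proof never establishes this dichotomy; instead you define it pointwise (``type~I'' vs.\ ``type~B'') and attempt to show directly that a positive mass of type~B configurations kills the type~I mass, by a stopping-time-plus-Palm contradiction. This argument, if it worked, would use none of the tree structure and would prove the theorem for \emph{arbitrary} graphs --- which should already be a warning sign, since the authors deliberately confine the statement to spherically symmetric trees and lean on that structure.

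The concrete gap is in the final step. You observe correctly that the conditional law of the past given $\omega_T$ at the forward stopping time $T$ is not the reversed stationary Markov law, so $\Pb{E^-(T)\md\omega_T}$ cannot simply be identified with $g^-(\omega_T)$. But the Palm-theoretic identification $\Prob_{\mathrm{Palm}}(E^-(0)\md\omega_0)=g^-(\omega_0)$ you invoke in its place is also false, for essentially the same reason. Under the Palm measure of the right-endpoint point process, the conditional law of the past given $\omega_0$ is a \emph{conditioned} version of the reversed law: $0$ being a right endpoint is a conjunction of a past germ event ($E^-(0)$ holds) and a future germ event ($E^+(0)$ fails), and Palm disintegration does not remove that conditioning. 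One therefore expects $\Prob_{\mathrm{Palm}}(E^-(0)\md\omega_0)=1$ tautologically, which does not equal $g^-(\omega_0)$ unless $g^-(\omega_0)=1$ --- and that is precisely what one would need to prove. Declaring the contradiction therefore begs the question. A second, more technical issue is that the right-endpoint process need not be locally finite: the set $\interior\perctime$ can have infinitely many components in a compact interval (this is precisely the regime studied in Theorem~\ref{th:dyntrans}.(ii)), so the Palm apparatus for a simple stationary point process with $\sigma$-finite intensity may not apply at all.

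The part of your argument up to and including the Blumenthal dichotomy ($g^+=g^-=g\in\{0,1\}$) is correct and matches the paper's use of $h_v$. The missing ingredient is the tree-based argument (the recursion of Lemma~\ref{l.hered} and the second-moment control of Lemma~\ref{l.wake}) that upgrades the pointwise dichotomy to a global one. Without it, the proof does not go through.
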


Now consider an arbitrary locally finite tree $\tree$ with root $\rho$ and a vertex 
$v$ of $\tree$.
For any $\omega\subseteq 2^{E(\tree)}$, we may start dynamical percolation
$\omega_t$ with $\omega_0=\omega$.
It is easy to see that for this Markov process, the probability that there is a positive
$\epsilon$ such that $\FCCTree(v,t,\infty)$ for all times $t\in[0,\epsilon)$
is $0$ or $1$. Let $h_v(\omega)\in\{0,1\}$ denote this probability.

\begin{lemma}\label{l.hered}
With the above notation, let $v_1,\dots,v_m$ denote the children of $v$; that is,
the neighbors of $v$ within $\tree_v$.
Then
$$
h_v(\omega) = \max\bigl\{ 1_{[v,v_j]\in\omega}\,h_{v_j}(\omega):j=1,2,\dots,m\bigr\}
$$
holds for a.e.\ $\omega$ with respect to the invariant measure of the Markov process
$\omega_t$.
\end{lemma}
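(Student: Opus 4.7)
The plan is to establish the two inequalities separately. The reverse direction, that a nonzero right-hand side forces $h_v(\omega)=1$, is immediate: if $[v,v_j]\in\omega$ and $h_{v_j}(\omega)=1$, then a.s.\ the subtree $T_{v_j}$ percolates throughout some $[0,\epsilon_1)$ and the edge $[v,v_j]$ remains open throughout some $[0,\epsilon_2)$ (its first switch time is a.s.\ positive), so $v$ percolates through $T_v$ on $[0,\min\{\epsilon_1,\epsilon_2\})$. This gives $h_v(\omega)=1$ for every such $\omega$.

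For the harder direction I would work with the stationary two-sided version $(\omega_t)_{t\in\R}$ of the Markov process, so $\omega_0$ has the invariant distribution, and translate the event $\{h_v=1\}$ into a topological property of the random closed set $P:=\{t\in\R:\omega_t\in A_v\}$, where $A_v$ is the event that $v$ percolates inside $T_v$. The paper's convention that each edge is on at its switch times makes each $\{t:e\text{ on at }t\}$ closed, so $P$, $P_j:=\{t:\omega_t\in A_{v_j}\}$, and $Q_j:=\{t:[v,v_j]\text{ on at }t\}$ are all closed (each being a countable intersection of finite unions of closed sets), while $P=\bigcup_j(Q_j\cap P_j)$. The $0$-$1$ law for $h_v$ then gives $h_v(\omega_0)=1\Leftrightarrow 0\in\mathrm{int}_{\mathrm{right}}(P)$ almost surely, and analogously for each $h_{v_j}$, while $[v,v_j]\in\omega_0\Leftrightarrow 0\in Q_j$.

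The key topological observation is that for any closed $F\subseteq\R$, the set $\mathrm{int}_{\mathrm{right}}(F)\setminus\mathrm{int}(F)$ coincides with the set of left endpoints of connected components of $F$, and is therefore countable. Applying this to $P$ and each $P_j$, together with the obvious fact that $\partial Q_j$ is countable (it is the set of switch times of a single edge), and combining with stationarity and Fubini, shows that the origin a.s.\ lies in the two-sided interior or in the open complement of each of $P$, the $P_j$'s and the $Q_j$'s. On this full-measure event, assume $h_v(\omega_0)=1$: then $0\in\mathrm{int}(P)$, so $0\in P=\bigcup_j(Q_j\cap P_j)$, so some $j$ has $0\in Q_j\cap P_j$, and by the dichotomies both $0\in\mathrm{int}(Q_j)$ and $0\in\mathrm{int}(P_j)$ hold, i.e.\ $[v,v_j]\in\omega_0$ and $h_{v_j}(\omega_0)=1$. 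Combined with the easy direction this gives the lemma. The main obstacle I anticipate is the careful setup translating the Markov-process definition of $h_v$ into the topological language on the two-sided trajectory via the 0-1 law, after which the case analysis is essentially automatic.
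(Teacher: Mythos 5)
The easy direction of your argument (if some $[v,v_j]\in\omega$ and $h_{v_j}(\omega)=1$, then $h_v(\omega)=1$) is correct and matches the paper's "it is certainly clear" remark. The hard direction, however, has a genuine gap at the step where you invoke the "dichotomy" for $P_j$. The countability of $\mathrm{int}_{\mathrm{right}}(F)\setminus\mathrm{int}(F)$, plus stationarity and Fubini, gives only that a.s.\ $0\notin\mathrm{int}_{\mathrm{right}}(P_j)\setminus\mathrm{int}(P_j)$. It does \emph{not} give the claimed dichotomy that a.s.\ $0\in\mathrm{int}(P_j)$ or $0\notin P_j$; for that you would additionally need $\Pb{0\in P_j\setminus\mathrm{int}_{\mathrm{right}}(P_j)}=0$, which amounts to saying that a.s.\ whenever $v_j$ percolates inside $T^{v_j}$ at time $0$ we already have $h_{v_j}(\omega_0)=1$. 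That is precisely the statement that there are no exceptional times of nonpercolation for the subtree $T^{v_j}$, and in the regime the section is actually interested in (compare Theorem~\ref{t.01}, where $\Pb{0\in\partial\perctime}>0$ and $\perctime=\partial\perctime$ a.s.) this can fail with positive probability. Since you cannot rule this out without essentially assuming what you are trying to iterate toward, the inference ``$0\in P_j$ hence $0\in\mathrm{int}(P_j)$'' is unjustified.

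The underlying topological fact you would need is also false deterministically: one can cover $[0,\epsilon)$ by two closed sets such that $0$ is not right-interior to either (interleave two sequences of closed intervals accumulating at $0$). So no purely topological/countability argument can close the gap. This is exactly the difficulty the paper's proof is designed to circumvent: instead of a dichotomy for each $P_j$ at once, it runs an induction over the children, maintaining a sequence $t_n\to 0$ of times in $\WW_k=\bigcap_{j\le k}U_j$, and uses a translation-invariance argument — for measurable $S\subseteq[0,1]$ one has $\One_S(t)-\One_S(t+t_n)\to 0$ in $L^2$, hence a.e.\ along a subsequence, so a.e.\ $t\in S$ has infinitely many $n$ with $t+t_n\in S$ — applied to $S=\closure{U_{k+1}}\cap[0,1]$ to show that a.s.\ either $0\notin\closure{U_{k+1}}$ or infinitely many of the $t_n$ lie in $\closure{U_{k+1}}$. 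That probabilistic step is the essential ingredient that your approach is missing.
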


We point out that the lemma does not need to assume that $\tree$ is spherically symmetric.

\proof
It is certainly clear that $h_v$ is at least as large as the $\max$ on the right hand side.
We therefore only need to prove the reverse inequality.
Let $U_j$ be the set of times $t\in[0,\infty)$ such that
$v$ does not percolate to $\infty$ in $[v,v_j]\cup\tree^{v_j}$ at time $t$.
Then $U_j$ is a relatively open set.

Set $\WW_k:=\bigcap_{j=1}^k U_j$,
and $\WW_k':=\bigcup_{j=1}^k \bigl([0,\infty)\setminus\closure{U_j}\bigr)$.
Note that the $\max$ on the right hand side in the statement of the lemma is
equal to $1_{0\in \WW'_m}$.
We prove by induction on $k$ that $0\in\closure{\WW_k}\cup \WW_k'$ a.s.\ holds for $k=0,1,\dots,m$.
The case $k=m$ then implies the statement of the lemma.
The base of the induction, $k=0$, is clear, because $\WW_0=[0,\infty)$, by convention.
Now suppose that $0<k<m$ and $0\in\closure{\WW_k}\cup \WW_k'$.
If $0\in \WW_k'$, then $0\in \WW_{k+1}'$.
Therefore, suppose that $0\in \closure{\WW_k}$.
Hence, there is a sequence $(t_n:n\in\N)$ in $\WW_k$
such that $t_n\to 0$. Moreover, it is easy to see that we may choose
the sequence to depend only on $\WW_k$
and in such a way that each $t_n$ is measurable.
In particular, the sequence $\{t_n\}$ is independent from the restriction of
$(\omega_t:t\ge 0)$ to $[v,v_{k+1}]\cup \tree^{v_{k+1}}$.
Fix some $n\in\N$, and suppose for the moment that $t_n$ is in the closure of $U_{k+1}$.
Then we can find a point $t'$ in $U_{k+1}$ arbitrarily close to $t_n$.
Since $t_n\in \WW_k$, and $\WW_k$ is relatively open,
there is a point $t'$ arbitrarily close to $t_n$ that is in $\WW_{k+1}=\WW_k\cap U_{k+1}$.
Therefore, in the case that
$\bl\{n:t_n\in\closure{U_{k+1}}\br\}$ is infinite a.s.,
we have $0\in\closure{\WW_{k+1}}$ a.s.\ and
the inductive claim follows.

For every measurable $S \subseteq [0,1]$ we have by elementary Fourier analysis
that $1_S(t)-1_S(t+t_n)$ tends to zero
in $L^2$ as $n\to\infty$. Therefore, there is some infinite $Y\subseteq\N$ such that $1_S(t)-1_S(t+t_n)$
tends to zero a.e.\ as $n\to\infty$ within $Y$.  Consequently, a.e.\ $t\in S$
satisfies $\bigl|\{n:t+t_n\in S\}\bigr|=\infty$.
We may apply this to the set $S:=\closure{U_{k+1}}\cap[0,1]$. 
However, given the sequence $\{t_n\}$,
the distribution of $\closure{U_{k+1}}$ is invariant under translations.
Consequently, a.s.\ either $0\in \WW_{k+1}'$ or $\bigl|\bl\{n:t_n\in \closure {U_{k+1}}\br\}\bigr|=\infty$.
This proves $0\in \closure{\WW_{k+1}}\cup \WW_{k+1}'$ a.s., and completes the induction.
The statement of the lemma follows immediately.
\QED

\begin{lemma}\label{l.wake}
Consider stationary percolation on
a spherically symmetric tree with spherically symmetric
edge probabilities (and, as usual, assume that the edge probabilities
are bounded away from $0$ and $1$).
Then a.s.\ $W_\infty=\lim_{n\to\infty} W_n/\ew n$ exists and $W_\infty<\infty$.
Moreover, a.s.\ $W_\infty>0$ if and only if $\percrt$.
\end{lemma}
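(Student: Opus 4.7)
The plan has three stages: establishing the martingale, the easy direction, and the harder direction via an $L^2$ bounded martingale plus a Cauchy--Schwarz estimate and Levy's 0-1 law.

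By spherical symmetry, $E[W_n\mid\mathcal F_{n-1}]=p_n\,d_{n-1}\,W_{n-1}$ where $d_{n-1}=|T_n|/|T_{n-1}|$, and $\ew n=p_n\,d_{n-1}\,\ew{n-1}$, so $W_n/\ew n$ is a nonnegative martingale started at $1$. Doob's convergence theorem yields an a.s.\ finite limit $W_\infty$ with $E[W_\infty]\le 1$, proving the first half of the statement. If $\sum 1/\ew n=\infty$, then $\Pb{\percrt}=0$ by Lyons' criterion~\eqref{e:perc} and the equivalence is vacuous, so I assume henceforth $\sum 1/\ew n<\infty$, which forces $\ew n\to\infty$. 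The easy implication $\{W_\infty>0\}\subseteq\{\percrt\}$ is immediate: on $\{W_\infty>0\}$ we have $W_n\to\infty$, so $W_n\ge 1$ eventually and hence for all $n$ (a path to level $n$ restricts to paths at every lower level), and K\"onig's lemma produces an infinite open path from $\rho$.

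For the converse I first invoke~\eqref{e:Lyonsagain}: since $\Pb{W_n>0}\ge\Pb{\percrt}>0$ is bounded below, $E[W_n^2]\le O(\ew n^2)$, so $W_n/\ew n$ is $L^2$ bounded, converges in $L^2$, and $E[W_\infty]=1$. Applying the same reasoning to the subtree $T^v$ rooted at any $v\in T_n$ (whose series $\sum_{k>n}\ew n/\ew k$ is also summable) yields $E[(W^v_\infty)^2]\le 2/\beta_n$, where $\beta_n:=\Pb{\FCbetterTree(v,\infty)}$ and $W^v_\infty$ is the normalized martingale limit in $T^v$ (so $E[W^v_\infty]=1$); crucially, the constants in~\eqref{e:Lyonsagain} depend only on $\inf_n p_n$ and $\sup_n p_n$, so the estimate is uniform in $n$. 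Cauchy--Schwarz applied to
\[
1=E[W^v_\infty]=E\bigl[W^v_\infty\,\One_{\{W^v_\infty>0\}}\bigr]\le\sqrt{E[(W^v_\infty)^2]\,\alpha_n},
\]
with $\alpha_n:=\Pb{W^v_\infty>0}$, then gives the key uniform bound $\alpha_n\ge \beta_n/2$.

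With these bounds in hand, the finale uses spherical symmetry to decompose the tree past level $n$, conditional on $\mathcal F_n$, into independent copies rooted at the $W_n$ vertices of $S_n:=\{v\in T_n:\rho\leftrightarrow v\}$. Using $W_\infty=\ew n^{-1}\sum_{v\in S_n}W^v_\infty$ with the $W^v_\infty$'s i.i.d.\ given $\mathcal F_n$, one obtains the two identities
\[
\Pb{\percrt\md\mathcal F_n}=1-(1-\beta_n)^{W_n},\qquad \Pb{W_\infty>0\md\mathcal F_n}=1-(1-\alpha_n)^{W_n}.
\]
Levy's 0-1 law sends these a.s.\ to $\One_\percrt$ and $\One_{\{W_\infty>0\}}$. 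Because $x\mapsto 1-(1-x)^{W_n}$ is concave on $[0,1]$ and vanishes at $0$, and since $\alpha_n\le\beta_n$, the concavity gives $1-(1-\alpha_n)^{W_n}\ge(\alpha_n/\beta_n)\bigl(1-(1-\beta_n)^{W_n}\bigr)\ge\frac12\bigl(1-(1-\beta_n)^{W_n}\bigr)$. On $\{\percrt,\,W_\infty=0\}$ the right side tends to $1/2$ while the left side tends to $0$, a contradiction, so this event has probability zero.

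The main obstacle is securing the uniform ratio bound $\alpha_n\ge \beta_n/2$: this hinges on applying~\eqref{e:Lyonsagain} to every shifted subtree with the same absolute constants, which in turn relies on the standing hypothesis that the $p_n$'s are bounded away from $0$ and $1$. Once the ratio bound is established, the concavity-plus-Levy finish is essentially automatic, though the two conditional probability identities must still be verified carefully from the spherical symmetry and the conditional independence of subtree configurations given $\mathcal F_n$.
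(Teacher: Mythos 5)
Your proof is correct and takes a genuinely different route from the paper. Both arguments hinge on a uniform second-moment bound for the subtree martingale $W^v_m/\Eb{W^v_m}$ (you via \eqref{e:Lyonsagain} directly, the paper via \eqref{e:L2bounded}), both yielding a universal $\delta>0$ such that a vertex that percolates in its subtree produces a positive normalized limit $W^v_\infty$ with probability $\ge\delta$. The difference is what you do next. The paper appeals to Lemma 4.2 of Pemantle--Peres \cite{PPray}, which gives $\lim_n|U_n|=\infty$ a.s.\ on $\percrt$ (where $U_n$ is the set of level-$n$ vertices connected to the root that also percolate to infinity in their subtrees); combined with the conditional independence of the trees $T^v$ this yields $\Pb{W_\infty>0\md U_n}\ge 1-(1-\delta)^{|U_n|}\to1$. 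You avoid that external citation altogether: you write down the exact identities $\Pb{\percrt\md\mathcal F_n}=1-(1-\beta_n)^{W_n}$ and $\Pb{W_\infty>0\md\mathcal F_n}=1-(1-\alpha_n)^{W_n}$, feed them through Levy's 0-1 law, and use the concavity of $x\mapsto1-(1-x)^{W_n}$ together with the Cauchy--Schwarz ratio bound $\alpha_n\ge\beta_n/2$ to force a contradiction on $\{\percrt,\,W_\infty=0\}$. Your argument is thus self-contained relative to what the paper already develops; the paper's version is slightly shorter given the ready-made lemma but imports a nontrivial result. One stylistic nitpick: the easy inclusion $\{W_\infty>0\}\subseteq\{\percrt\}$ does not actually need $\ew n\to\infty$ (all you need is $W_n\ge1$ eventually, which follows directly from $W_n/\ew n\to W_\infty>0$ since $\ew n>0$), so the explicit restriction to $\sum 1/\ew n<\infty$ before that sentence is unnecessary for that step, though of course it is needed for the converse.
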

\proof
As we have noted before, $W_n/\ew n$ is a non-negative martingale,
which implies the a.s.\ existence and finiteness of $W_\infty$.
Let $X_n$ be the set of vertices $v$ at level $n$ satisfying
$\FCbetter(\rho,v)$, and
let $U_n:=\{v\in X_n: \FCbetterTree(v,\infty)\}$.
Fix some $v\in T_n$. 
With no loss of generality, assume that
$\Pb{\percrt}>0$, and hence $\Pb{v\in U_n}>0$.
For $m\ge n$, let $X^v_m:=\{u\in T_m:\FCbetterTree(v,u)\}$ and
$W^v_m:=|X^v_m|$.
The inequality~\eqref{e:L2bounded} applied to $T^v$ implies that
there is a universal constant $\delta>0$ such that
$$
\PB{W^v_m\ge \delta\,\Es{W^v_m\md W^v_m>0}\md W^v_m>0}\ge \delta\,.
$$
Since $\One_{\{W^v_m>0\}}\to \One_{\{\FCbetterTree(v,\infty)\}}$ a.s.\ as $m\to\infty$,
and $\Eb{W_m^v\md W_m^v>0}\ge \ew m/|T_n|$,
this implies
$$
\liminf_{m\to\infty} \PB{W^v_m \ge \delta \ew m\,|T_n|^{-1}\md \FCbetterTree(v,\infty)}\ge \delta\,.
$$
Hence,
$$
\PB{\lim_{m\to\infty} W^v_m/\ew m>0\md \FCbetterTree(v,\infty)}\ge \delta\,.
$$
By conditioning on the set $U_n$ and using conditional independence on the
various trees $T^v$, $v\in U_n$, we therefore get
$$
\Pb{W_\infty>0\md U_n} \ge 1-(1-\delta)^{|U_n|}\,.
$$
By Lemma 4.2 in~\cite{PPray}, a.s.\ on the
event $\percrt$ we have $\lim_{n\to\infty} |U_n|=\infty$.
Hence, for every finite $N$ we have $\Pb{|U_n|>N\md\percrt}\to 1$
as $n\to\infty$.
The lemma follows.
\QED

\proofof{Theorem~\ref{t.01}}
Let $\omega$ be a sample from the stationary measure of the Markov process $\omega_t$.
Let $q_n := \Eb{h_{u_n}(\omega)}$, where $u_n$ is a vertex at level $n$
(since the tree is spherically symmetric, the choice of $u_n$ does not affect $q_n$).
Let $\ev F_n$ denote the $\sigma$-field generated by the restriction of
$\omega$ to the ball of radius $n$ about the root $u_0$.
Lemma~\ref{l.hered} easily implies by induction
that $h_{u_0}(\omega)=1$ if and only if there is a vertex $v$ at level $n$ that is
connected in $\omega$ to $u_0$ and satisfies $h_{v}(\omega)=1$.
Therefore,
$$
\Eb{h_{u_0}(\omega) \md \ev F_n} = 1-(1-q_n)^{W_n} = 1-\exp\bl( \log(1-q_n)\, W_n\br).
$$
Since $\Eb{h_{u_0}(\omega) \md \ev F_n}$ tends to $h_{u_0}(\omega)$ as $n \to \infty$, we conclude that a.s.\
$\log(1-q_n)\,W_n$ tends to $0$ or $-\infty$.
If 
$$
\PB{\lim_{n\to\infty}\log(1-q_n)\,W_n=-\infty}>0\,,
$$
then Lemma~\ref{l.wake} implies
$$
\PB{\lim_{n\to\infty}\log(1-q_n)\,W_n=-\infty\md\percrt}=1\,.
$$
Therefore, we get either $h_{u_0}(\omega)=0$ a.s.,
or else $h_{u_0}(\omega)=\One_{\{\percrt\}}$ a.s.
The theorem follows.
\QED

\section{Some open questions} \label{sec:questions}

Following are a few questions and open problems suggested by the
present paper.

\newcounter{saveenum}
\begin{enumerate}
\item  In the spherically symmetric tree case, if $\ew k\asymp k^2$, is it the case that
with positive probability the set of times $t\in[0,1]$
at which the root percolates has infinitely many connected components?
In this case $\Eb{\mathfrak X_n}\asymp \log n$
grows to $\infty$ but the second moment method fails.

\item Under the assumption of Theorem \ref{th:pivotal}, is it the case that
$\{t\in[0,1]:\FCC(\rho,t,\infty)\}$
has finitely many
connected components a.s.? (From an earlier remark, this would be true if
in this setting
finiteness of the left-hand term in~\eqref{eq:Fatou} implies finiteness of the
right-hand term.)

\item Does the conclusion of Theorem \ref{th:sstrees1}(ii) hold
under the weaker assumptions
that
$$
\limsup_n \frac{w_n}{n(\log n)^\alpha} < \infty
$$
for some $\alpha \le 2$
and the tree percolates with positive probability at a fixed time?
We describe a natural approach which does {\it not} work.
Note that under the above assumption, one can find a new tree
which dominates the original tree (in the sense that the number of vertices at the $n$th
level level is larger for any $n$) for which the new $w_n$'s satisfy the assumption
of Theorem \ref{th:sstrees1}(ii) and hence would have exceptional times. In \cite{PP},
it is shown that this domination has a number of implications.
However, one cannot conclude that the set of times at which the original tree
percolates is dominated by the set of times at which the new tree percolates.
An example is  $T_1$ being a tree with degrees
$d_1=1$ and $d_2=2$, $T_2$ being a tree with degrees $d_1=2$ and $d_2=1$ and the
edge probabilities are very small. Then $T_2$ dominates $T_1$, but the probability that
the root is connected to level 2 throughout the time interval $[0,10]$ is larger for $T_1$.
\setcounter{saveenum}{\value{enumi}}
\end{enumerate}

There are various questions concerning the path behavior of the process
which might be interesting to pursue.
In the following questions, we consider a spherically symmetric tree in which
the root percolates with positive probability at a fixed time.
Let $Z:=\{t\in\R:\FCC(\rho,t,\infty)\}$.

\begin{enumerate}
\setcounter{enumi}{\value{saveenum}}
\item
Are the boundary points of the connected components of $\R\setminus Z$
always flip times?

\item
If $Z$ has connected components of positive length, do the boundary points
of these intervals have to also be boundary points of intervals in 
$\R\setminus Z$?

\item If there are exceptional times of nonpercolation,
is $Z$ the closure of the flip times?

\end{enumerate}

\bigskip
\noindent{\bf Acknowledgments}.
We would like to express our appreciation to Zhan Shi. 
He contributed in different ways at the beginning stages of this project,
helping with an earlier version of Theorem \ref{th:sstrees1}.(i)
and with constructing some initial examples. 
Research supported in part by
NSF grant DMS-0605166 (Peres) and
the Swedish Natural Science Research Council
and the G\"{o}ran Gustafsson Foundation (KVA) (Steif).
J.S. thanks Paris VI and Microsoft for hospitality during which parts of this
work were completed. Some of this work was also carried out at the
Park City Mathematics Institute.

\bigskip
\filbreak
\begingroup
{
\small
\parindent=0pt

Yuval Peres\\
Microsoft Corporation\\
One Microsoft Way\\
Redmond, WA 98052, USA\\
{peres@math.berkeley.edu} \\
{\tt
{http://stat-www.berkeley.edu/\string~peres/}  }

\bigskip

Oded Schramm\\
Microsoft Corporation\\
One Microsoft Way\\
Redmond, WA 98052, USA\\
{\tt
{http://research.microsoft.com/\string~schramm/}}

\bigskip

Jeffrey E. Steif\\
Mathematical Sciences \\
Chalmers University of Technology \\
and \\
Mathematical Sciences \\
G\"{o}teborg University \\
SE-41296 Gothenburg, Sweden \\
{steif@math.chalmers.se} \\
{\tt {http://www.math.chalmers.se/\string~steif/}}

}

\filbreak

\endgroup

\end{document}